\documentclass[sn-mathphys-num]{sn-jnl}


\usepackage{graphicx}%
\usepackage{multirow}%
\usepackage{amsmath,amssymb,amsfonts}%
\usepackage{amsthm}%
\usepackage{mathrsfs}%
\usepackage[title]{appendix}%
\usepackage{xcolor}%
\usepackage{textcomp}%
\usepackage{manyfoot}%
\usepackage{booktabs}%
\usepackage{algorithm}%
\usepackage{algorithmicx}%
\usepackage{algpseudocode}%
\usepackage{listings}%
\usepackage[utf8]{inputenc}
\usepackage[T1]{fontenc}
%


\theoremstyle{thmstyleone}%
\newtheorem{theorem}{Theorem}

\theoremstyle{thmstyletwo}%
\newtheorem{lemma}{Lemma}
\theoremstyle{thmstylethree}%
\newtheorem{definition}{Definition}%

\raggedbottom

\begin{document}

\title[Article Title]{Extreme points, strongly extreme points and exposed points in Orlicz--Lorentz spaces}


\author[1]{\fnm{Di} \sur{Wang}}\email{wangd267@mail2.sysu.edu.cn}

\author*[1]{\fnm{Yongjin} \sur{Li}}\email{stslyj@mail.sysu.edu.cn}

\affil[1]{\orgdiv{School of Mathematics}, \orgname{Sun Yat-sen University}, \orgaddress{\city{Guangzhou}, \postcode{510275}, \country{China}}}




\abstract{

In this paper, we investigate the extremal structure of the unit ball in the most general classes of Orlicz--Lorentz spaces. the characterizations of extreme points, 
strongly extreme points, and exposed points are given for Orlicz--Lorentz function spaces $\Lambda_{\varphi,\omega}$ generated by an arbitrary Orlicz function $\varphi$ and a non--increasing weight function $\omega$, without assuming $\varphi$ is an $N$-function and $\omega$ is strict decreasing. 
Furthermore, we provide necessary and sufficient conditions for a functional in the dual space to attain its Luxemburg norm at $x \in \Lambda_{\varphi,\omega}$ without assuming that $\varphi$ is an $N$--function. The supporting functionals of $x \in \Lambda_{\varphi,\omega}$ are also characterized.
}

\keywords{Orlicz-Lorentz function space, Extreme point, Strongly extreme point, Exposed point, Supporting functional}


\pacs[MSC Classification]{46E30, 46A80, 46B20}

\maketitle

\section{Introduction}\label{sec1}
This article aims to characterize the extreme points, strongly extreme points and exposed points in Orlicz--Lorentz function spaces $\Lambda_{\varphi, \omega}^{o}$ generated by arbitrary Orlicz function $\varphi$ and decreasing weight $\omega$. 
It has been presented in \cite{kaminska1990extreme} for results about extreme points, but assuming that $\omega$ is strictly decreasing. In this article we will remove these restrictions. 
In our study of strongly extreme points, we have applied methods different from these in \cite{WangSEP2023}. 
In \cite{Wang2024}, the descriptions for supporting functional are given assuming Orlicz function $\varphi$ is $N$-function. In this article we remove this assumption, and give the description of the supporting functional when Orlicz function is not an $N$-function.

Suppose $C$ is a non--empty convex subset of Banach space $X$. A point $x\in C$ is called an extreme point of $C$ if $2x=y+z$, $y,z\in C$ implies $y=z$.
Moreover, if $2x=y_{n}+z_{n}$ and $d(y_{n}, C)\rightarrow 0$, $d(z_{n}, C)\rightarrow 0$ imply $\|y_{n}-z_{n}\|\rightarrow\infty$ as $n\rightarrow\infty$, then $x$ is called a strongly extreme point.

Let $Grad(x)$ denote the set of supporting functionals at $x$, $Grad(x)=\{f\in S(X^{*}): f(x)=\|x\|\}$ where $S(X^{*})$ denote the unit sphere of $X^{*}$ and $X^{*}$ denote the dual space of $X$.  $x\in X$ is said to be an exposed point of $C$ if there exists $f\in Grad(x)$ such that $f(x)>f(y)$ for all $y\in C$ and $y\neq x$.
In this case, the functional $f$ is called an exposed functional of $C$ and exposing $C$ at $x$.

For $f\in L_{0}$, its non-increasing rearrangement $f^*$ is defined by
$$
f^*(t):=\inf\{\lambda >0 : \mu_{f}(\lambda)\leq t \}, \;  t>0,
$$
where $\mu_{f}$ is the distribution function of $f$, and it is defined by
$$
\mu_{f}(t):=\mu(\{s \in R^{+} : | f(s)|  > t\}), \;  t>0.
$$


A function $\sigma:(0,r)\rightarrow (0,r)$ $(r\leq \infty)$ is called a measure preserving transformation (\cite{Bennett1953,Brudnyi19882009}) if for every measurable set $E\subset (0,r)$, $\sigma^{-1}(E) $ is measurable and $\mu(\sigma^{-1}(E))=\mu(E)$.
It was given in \cite[Corollary 7.6]{Bennett1953} that for a resonant measure space $(R, \mu)$ and a non--negative $\mu$-measurable function $f$ on $R$ satisfying $\lim_{t\rightarrow\infty}f^{*}(t)=0$, there exists a measure-preserving transformation $\sigma$ from the support of $f$ onto the support of $f^{*}$ such that $f=f^{*}\circ\sigma\;\mu-a.e.$ on the support of $f$. More results about measure preserving transformation please refer to \cite{Bennett1953,Ryff1970449}.

For a function $\omega: (0,\infty) \rightarrow (0, \infty)$, It $\omega$ is non-increasing, locally integrable with respect to the Lebesgue measure $\mu$ and
$\int_{0}^{\infty}\omega(t)dt=\infty$, then $\omega$ is called a weight function.
The Lorentz space $\Lambda_{\omega}$ is defined as
\begin{equation*}
\Lambda_{\omega}=\left\{ f\in L_{0}: \|f\|_{\Lambda_{\omega}}=\int_{R^{+}}f^{*}(t)\omega(t)dt=\int_{R^{+}}f^{*}(t)dW<\infty \right\},
\end{equation*}
and the Marcinkiewicz space $M_{W}$ is defined as
\begin{equation*}
M_{W}=\left\{ f\in L^{0}: \|f\|_{M_{W}}=\sup_{\alpha>0}\frac{\int_{0}^{\alpha}f^{*}(t)dt}{W(\alpha)}<\infty \right\}.
\end{equation*}
where 
$
W(t):=\int_{0}^{t}\omega(s)ds, \; t\geq0.
$
The two spaces are K\"{o}the dual to each other.

For $f\in L_{0}$ and $g\in L_{0}$, if $\int_{0}^{t}f^{*}(s)ds<\int_{0}^{t}g^{*}(s)ds$ for $t>0$, then we say $f$ is submajorized by $g$, and we denote this by $f\prec g$.
Obviously $\|f\|_{M_{W}}\leq 1$ whenever $f\prec \omega$.

A function $\varphi:R^{+}\rightarrow R^{+}$ is an Orlicz function(\cite{Chen1996}) if $\varphi$ is convex, $\varphi(0)=0$ and $\varphi(t)>0$ for all $t>0$. For any Orlicz function $\varphi$, its complementary function $\psi$ in the sense of Young is defined by the formula
$$
\psi(t)=\sup_{s>0}\{| st | -\varphi(s)\}
$$
where $t>0$. Obviously, if $\varphi$ is an Orlicz function, then $\psi$ is an Orlicz function as well. For $p(s)$ and $q(t)$ we denote the right-derivative of $\varphi$ and $\psi$, respectively. Both $p(s)$ and $q(t)$ are non-decreasing and satisfy:
$$
\varphi(u)=\int_{0}^{u}p(t)dt, \quad \psi(v)=\int_{0}^{v}q(s)ds \quad u\geq0, v\geq0.
$$
$\varphi$ and $\psi$ satisfy Young Inequality:
$$
uv \leq \varphi(u)+ \psi(v), \quad u\geq0, v\geq0,
$$
and the equation $uv=\varphi(u)+\psi(v)$ hold whenever $u\in [q_{-}(v)sign\:v,\;q(v)sign\:v]$ or $v\in [p_{-}(u)sign\: u,\; p(u)sign\: u]$, where $p_{-}(t)=sup\{p(s): 0\leq s <t\}$, $q_{-}(t)=sup\{q(s): 0\leq s <t\}$, and $p_{-}(0)=q_{-}(0)=0$. 
For Orlicz function $\varphi$, if $\varphi(t)$ is affine on the interval $[a,b]$, and it is not affine on either $[a-\varepsilon, b]$ or $[a, b+\varepsilon]$ for any $\varepsilon>0$. The interval $[a,b]$ is called an affine interval of $\varphi$, denoted by $AI[a,b]$.
Define

\begin{align*}
	A^{\prime}&=\bigcup\{a_{i}: AI(a_{i}, b_{i})\: \text{with}\: p_{-}(a_{i})=p(a_{i})\}.\\
	B^{\prime}&=\bigcup\{b_{i}: AI(a_{i}, b_{i})\:\text{with}\: p_{-}(b_{i})=p(b_{i})\}.\\
	       A\:&=\bigcup\{a_{i}: AI(a_{i}, b_{i})\:\text{with}\: p_{-}(a_{i})<p(a_{i})\}.\\
	       B\:&=\bigcup\{b_{i}: AI(a_{i}, b_{i})\:\text{with}\: p_{-}(b_{i})<p(b_{i})\}.
\end{align*}
For an Orlicz function $\varphi$, by $S$ we denoted the strict convex point of $\varphi$. Let $\{[a_{i}, b_{i}]\}_{i\in N}$ be the set of all AIs of $\varphi$. Obviously, $S=R^{+}\backslash \cup_{i\in N}[a_{i}, b_{i}]$.
By $S^{\prime}$ we denoted $R^{+}\backslash S$.
We recall that an Orlicz function $\varphi$ satisfies the $\Delta_{2}$-condition($\varphi\in\Delta_{2}$) if there exist $C>0$ such that $\varphi(2t)\leq C\varphi(t)$ for all $t>0$. We further say $\varphi$ is an $N$-function whenever:
$$
\lim_{t\rightarrow 0+}\frac{\varphi(t)}{t}=0  \quad and \quad \lim_{t\rightarrow \infty}\frac{\varphi(t)}{t}=\infty.
$$

If $\omega(t)$ is a constant in an interval $A$, and for any interval $B$ such that $ A\subsetneqq B$, $\omega(t)$ is not a constant when $t\in B$. Then $A$ is called a maximal constant interval of $\omega$. $L(\omega)$ is the set composed of the maximal constant intervals.

For decreasing weight function $\omega$,
let $W(a,b)=\int_{a}^{b}\omega(t)dt$.
For arbitrary $f\in  L_{0}$, let $R(a_{1}, b_{1})=\frac{F(a_{1}, b_{1})}{W(a_{1}, b_{1})}=\frac{\int_{a_{1}}^{b_{1}}f(t)dt}{\int_{a_{1}}^{b_{1}}\omega(t)dt}$.

\begin{definition}[\cite{Halperin195305}]

if for every $s\in (a_{1}, b_{1})$, $W(a_{1}, s)=\int_{a_{1}}^{s}\omega(t)dt>0$ and $R(a_{1},s)\leq R(a_{1}, b_{1})$, then $(a_{1}, b_{1})$ is called a level interval of $f$ with respect to $\omega$. If the level interval is not contained in a larger one, it is called a maximal level interval.
\end{definition}
The maximal level intervals are non-overlapping and denumerable.
We will introduce the definition of level function and inverse level function.
\begin{definition}\cite[Definition 3.2]{Halperin195305}, \cite[Definition 4.2]{Kaminska2014229}
The level function of $f$ with respect to $\omega$ is denoted by $f^{0}$, and $f^{0}$ is defined by
\begin{equation*}
f^{0}(t)=\left\{
\begin{aligned}
&R(a_{n},b_{n})\omega(t) \;\;for\; t\in (a_{n}, b_{n}),\\
&\;\;\;f(t) \;\;\;\;\; ~~~~~~otherwise.
\end{aligned}
\right.
\end{equation*}
The inverse level function of $\omega$ with respect to $f$ is defined by
\begin{equation*}
\omega^{f}(t) =
\left\{
\begin{aligned}
&\frac{f(t)}{R(a_{n}, b_{n})}\;\;\; for \;t\in (a_{n}, b_{n}),\\
&\;\;\omega(t) \;\;\;\; ~~~~otherwise.
\end{aligned}
\right.
\end{equation*}
Where $(a_{n}, b_{n})$ is an enumeration of all maximal intervals of $f$ with respect to $\omega$, and $R(a_{n}, b_{n})=\frac{F(a_{n}, b_{n})}{W(a_{n}, b_{n})}$.
\end{definition}
Notice that for arbitrary$f\in\Lambda_{\varphi, \omega}$, we have $\omega^{f^{*}}=\omega$ since $f^{*}$ is decreasing.

For $f\in\Lambda_{\varphi, \omega}$, its modular $\rho_{\varphi, \omega}$ defined by
$$
\rho_{\varphi, \omega}(f)=\int_{0}^{\infty}\varphi(f^*(t))\omega(t)dt.
$$
The Orlicz-Lorentz function space is defined by
$$
\Lambda_{\varphi, \omega}=\{f\in L_{0}: \rho_{\varphi, \omega}(\lambda f)=\int_{0}^{\infty}\varphi(\lambda f^*(t))\omega(t)dt < \infty ~for~some~ \lambda >0\}.
$$
$\Lambda_{\varphi, \omega}$ becomes Banach space under the Luxemburg norm or the Orlicz norm(\cite{Kaminska199029,Foralewski2023}). The Luxemburg norm is defined by
$$
\|f\|=\|f\|_{\varphi, \omega}=\inf\{\lambda>0: \rho_{\varphi, \omega}(\frac{f}{\lambda})\leq 1\},
$$
and the Orlicz norm is defined by
$$
\|f\|^{o}=\|f\|_{\varphi, \omega}^{o}=\sup_{\rho_{\psi, \omega}(g)\leq 1}\int_{0}^{\infty}f^*(t)g^*(t)\omega(t)dt.
$$
Besieds, the Orlicz norm can be written in Amemiya type(\cite[Theorem 3.1]{Foralewski2023}), that is, 
\begin{equation}
\|f\|_{\varphi, \omega}^{o}=\inf_{k>0}{\frac{1}{k}\left(1+\rho_{\varphi, \omega}(kf)\right)}
\end{equation}
holds for every $f\in \Lambda_{\varphi, \omega}^{o}$.
$E_{\varphi, \omega}$ is a subspace of $\Lambda_{\varphi, \omega}$ denote the subspace of $\Lambda_{\varphi, \omega}$ such that
$$
E_{\varphi, \omega}=\{f\in L_{0}: \rho_{\varphi, \omega}(\lambda f)=\int_{0}^{\infty}\varphi(\lambda f^*(t))\omega(t)dt < \infty ~for~all~ \lambda >0\}.
$$
For arbitrary $f\in \Lambda_{\varphi, \omega}$,
$\theta(f)$ is defined as follow.
\begin{equation}
	\theta(f)=\inf\left\{\lambda>0: \rho_{\varphi, \omega}(\frac{f}{\lambda})<\infty\right\}.
\end{equation}
Now we will introduce the space $\mathcal{M}_{\varphi, \omega}$. For arbitrary Orlicz function $\varphi$ and decreasing weight $\omega$ the modular $P_{\varphi, \omega}$ is given by
\begin{equation*}
	P_{\varphi, \omega}(f)=\inf\left\{\int_{R^{+}}\varphi(\frac{f^{*}}{|g|});g\prec\omega	
		\right\},
\end{equation*}
and the space $\mathcal{M}_{\varphi, \omega}$ is defined by
\begin{equation*}
	\mathcal{M}_{\varphi,\omega}=\{
	f\in L_{0}: P_{\varphi, \omega}(\lambda f)<\infty\:for ~some~\lambda >0
	\}
\end{equation*}
Let $\mathcal{M}_{\varphi, \omega}=(M_{\varphi, \omega}, \|\cdot\|_{\varphi, \omega})$ and $\mathcal{M}_{\varphi, \omega}^{o}=(\mathcal{M}_{\varphi, \omega}^{o})$, where $\|\cdot\|_{\varphi, \omega}$ and $\|\cdot\|_{\mathcal{M}_{\varphi, \omega}^{o}}$ are defined by
\begin{align}
	\|f\|_{\mathcal{M}_{\psi, \omega}^{o}}
	&=\inf_{k>0}\left\{ \frac{1}{k}\left(P_{\psi, \omega}(kf)+1\right)\right\},\\
	\|f\|_{\mathcal{M}_{\psi, \omega}}
	&=\inf\left\{\lambda>0: \: P_{\psi, \omega}\left(\frac{f}{\lambda}\right)\leq 1\right\}.
\end{align}
In  Section \ref{Auxiliary}, we will introduced some basic properties of $\Lambda_{\varphi, \omega}$ and its dual, as well as conclusions about decreasing rearranged function. Section \ref{MainResult} is the main results of this article.
In Subsection \ref{SecExtremepoint}, the criteria of extreme points of Orlicz-Lorentz space generated by arbitrary Orlicz function and non-increasing rearrangement function are given.
When addressing this issue, we removed the restriction in \cite{kaminska1990extreme} that the weight function must be strictly decreasing.
Using this result, the criteria for strongly extreme points are given in Subsection \ref{SectionStronglyextremepoint} and we provided more methods for handling sequence of rearrangement function by utilizing measure-preserving transformations.
In Subsection \ref{Normexpression} we give the expression of the norm $\|v\|_{\mathcal{M}_{\varphi, \omega}^{o}}$ according to whether the set $K_{\mathcal{M}}(v)$ is an empty set. The necessity and sufficiency conditions are presented in Subsection \ref{Supportingfunctional} for a functional $f=L_{v}+s\in(\Lambda_{\varphi, \omega})^{*}$ whose norm is attained at $x\in\Lambda_{\varphi, \omega}$ without assuming $\varphi$ is an N-function. The supporting functional are also characterized. In Subsection \ref{Exposed}, the exposed points in Orlicz-Lorentz space are characterized.

\section{Auxiliary results}\label{Auxiliary}

Let $\Lambda_{\varphi, \omega}=(\Lambda_{\varphi, \omega}^{o}, \|\cdot\|_{\varphi, \omega})$, 
$\Lambda_{\varphi, \omega}^{o}=(\Lambda_{\varphi, \omega}, \|\cdot\|_{\varphi, \omega})$. 
\begin{lemma}\cite[Theorem 8.10]{2019abstractlorentz},\cite[Theorem 2.2]{Kaminska2014229}
For arbitrary Orlicz function $\varphi$ and decreasing weight $\omega$, 
(1) the k\"{o}the dual of Orlicz-Lorentz spaces $\Lambda_{\varphi, \omega}$ and $\Lambda_{\varphi, \omega}^{o}$ are expressed as
\begin{align*}
(\Lambda_{\varphi, \omega}, \|\cdot\|_{\varphi, \omega})^{\prime}
&=(\mathcal{M}_{\psi, \omega}, \|\cdot\|_{\mathcal{M}_{\psi, \omega}^{o}})\\
(\Lambda_{\varphi, \omega}, \|\cdot\|_{\varphi, \omega}^{o})^{\prime}
&=(\mathcal{M}_{\psi, \omega}, \|\cdot\|_{\mathcal{M}_{\varphi, \omega}})
\end{align*}
with equality of corresponding norms.\\
(2) If $\varphi\in\Delta_{2}$ and $\int_{0}^{\infty}\omega(t)dt = W(\infty)=\infty$. Then the dual spaces $(\Lambda_{\varphi, \omega})^{*}$ and $(\Lambda_{\varphi, \omega}^{o})^{*}$ are isometrically isomorphic to their corresponding K\"{o}the dual spaces. For functional $L_{v}\in (\Lambda_{\varphi, \omega})^{*}(resp., \Phi\in (\Lambda_{\varphi, \omega}^{o})^{*})$, there exists $v\in \mathcal{M}_{\psi, \omega}^{o}(resp., v\in \mathcal{M}_{\psi, \omega}^{o}$ such that
$$
\Phi(f)=L_{\phi}(f)=\int_{0}^{\infty}f(t)\phi(t)dt,\;\; f\in \Lambda_{\varphi, \omega}.
$$
and $\|\Phi\|_{(\Lambda_{\varphi, \omega})^{*}}=\|\phi\|_{\mathcal{M}_{\psi, \omega}}^{o}\;(resp., \|\Phi\|_{(\Lambda_{\varphi, \omega}^{o})^{*}}=\|\phi\|_{\mathcal{M}_{\psi, \omega}})$.
\end{lemma}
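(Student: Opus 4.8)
The plan is to obtain both assertions from the general theory of rearrangement invariant function spaces, using that $\Lambda_{\varphi,\omega}$ and $\Lambda_{\varphi,\omega}^{o}$ are symmetric, so that their K\"othe duals are again symmetric and may be computed on non-increasing functions alone. For part (1) I would start from the definition of the K\"othe dual norm, $\|g\|'=\sup\{\int_{0}^{\infty}|f(t)g(t)|\,dt:\|f\|_{\varphi,\omega}\le 1\}$, and apply the Hardy--Littlewood inequality $\int_{0}^{\infty}|fg|\le\int_{0}^{\infty}f^{*}(t)g^{*}(t)\,dt$ together with the measure-preserving transformation realizing equality on the resonant space (Corollary 7.6 quoted above) to replace the supremum by one over the pair of non-increasing functions $f^{*},g^{*}$ subject to $\rho_{\varphi,\omega}(f)\le 1$.

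The heart of the matter is an Orlicz-type conjugate-modular duality adapted to the weight. After the reduction I would rewrite the unweighted pairing as a weighted one, $\int_{0}^{\infty}f^{*}g^{*}\,dt=\int_{0}^{\infty}f^{*}\,(g^{*}/\omega)\,\omega\,dt$, so that $f^{*}$ is tested against a density in the Orlicz space built over the measure $\omega\,dt$; the pointwise Young inequality $uv\le\varphi(u)+\psi(v)$ applied under this integral, together with the constraint $\rho_{\varphi,\omega}(f)=\int\varphi(f^{*})\omega\le 1$, bounds the pairing by a conjugate Orlicz modular of that density. The one obstruction is that $g^{*}/\omega$ need not be non-increasing; repairing it by Halperin's level function, while respecting the K\"othe duality $\Lambda_{\omega}'=M_{W}$ recorded in the excerpt, is exactly what manufactures the infimum over all $g\prec\omega$ defining $P_{\psi,\omega}$. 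Feeding this estimate into the Amemiya form $\|g\|_{\mathcal{M}_{\psi,\omega}^{o}}=\inf_{k>0}\frac{1}{k}(1+P_{\psi,\omega}(kg))$ yields the first identity, and running the same computation with the Amemiya form of the Orlicz norm on $\Lambda_{\varphi,\omega}^{o}$ as the primal norm gives the second, the Luxemburg norm $\|\cdot\|_{\mathcal{M}_{\psi,\omega}}$ now appearing on the dual side. In each case sharpness of the bound is recovered by choosing $f^{*},g^{*}$ for which Young's inequality is an equality, i.e.\ $g^{*}\in[p_{-}(f^{*}),p(f^{*})]$, with the density taken to be the level function so that the submajorization is saturated.

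For part (2) the additional hypotheses serve only to upgrade the K\"othe dual to the topological dual. The condition $\varphi\in\Delta_{2}$ forces $E_{\varphi,\omega}=\Lambda_{\varphi,\omega}$ and makes the Luxemburg and Orlicz norms order continuous, and together with $W(\infty)=\infty$, which keeps the resonant measure space non-degenerate at infinity, this removes the singular part from every continuous functional. Hence each $\Phi\in(\Lambda_{\varphi,\omega})^{*}$ is an integral functional $L_{\phi}(f)=\int_{0}^{\infty}f\phi$ for a unique $\phi$ in the K\"othe dual, and order continuity makes $\phi\mapsto L_{\phi}$ norm preserving, producing the asserted isometric isomorphisms for both $(\Lambda_{\varphi,\omega})^{*}$ and $(\Lambda_{\varphi,\omega}^{o})^{*}$.

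I expect the main obstacle to be the weight bookkeeping in part (1): one must prove that the infimum over all densities $g\prec\omega$ in $P_{\psi,\omega}$ is attained at Halperin's level function and that this optimal choice matches the sharp constant in the Young estimate, so that the upper bound from Young's inequality and the lower bound from its equality case close up. Checking that the level function simultaneously saturates the submajorization and achieves pointwise Young equality, and pinning down the borderline scaling that fixes the Amemiya parameter $k$, is the delicate step; by comparison the passage from K\"othe to topological dual in part (2) is routine once order continuity is available.
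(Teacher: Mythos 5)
You should first note that the paper itself contains no proof of this statement: it is imported as a citation from \cite[Theorem 8.10]{2019abstractlorentz} and \cite[Theorem 2.2]{Kaminska2014229}, so the comparison can only be with the proofs in those references. Your outline does track their strategy correctly in broad strokes --- reduction to decreasing functions via Hardy--Littlewood and a measure-preserving transformation on the resonant space, the weighted Young inequality, Halperin's level function to repair the non-monotonicity of $g^{*}/\omega$ (which is indeed where the submajorization $g\prec\omega$ and the modular $P_{\psi,\omega}$ come from; the lemma opening Subsection~\ref{Normexpression} records that the infimum in $P_{\psi,\omega}$ is attained at the level function), and the Amemiya expressions to convert modular estimates into the two norm identities.

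The genuine gap is your sharpness step. You assert that the upper bound closes up ``by choosing $f^{*},g^{*}$ for which Young's inequality is an equality, i.e.\ $g^{*}\in[p_{-}(f^{*}),p(f^{*})]$,'' with a borderline value of the Amemiya parameter $k$. For an arbitrary Orlicz function no such choice need exist: the paper's own Theorem~\ref{Norminkothedual}, case (4), exhibits $v\in\mathcal{M}_{\varphi,\omega}^{o}$ with $K_{\mathcal{M}}(v)=\emptyset$ when $\varphi$ has linear growth at infinity and $\psi(B)\int_{0}^{\mu(\mathrm{supp}\,v)}\omega(t)\,dt\leq 1$; there the Amemiya infimum is approached only as $k\to\infty$ and the norm degenerates to $B\int_{0}^{\mu(\mathrm{supp}\,v)}v^{*}(t)\,dt$, so pointwise Young equality is unavailable and the lower bound must instead be produced by approximate maximizers (truncated $f_{n}$ with $\rho_{\varphi,\omega}(f_{n})\leq 1$ and a limiting argument), which your sketch does not supply --- this degenerate regime is precisely why the present paper must rework the supporting-functional theory without the $N$-function hypothesis. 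A second, smaller misstep is in part (2): order continuity is not a consequence of $\varphi\in\Delta_{2}$ alone, as you claim. If $W(\infty)<\infty$ then bounded functions lie in $\Lambda_{\varphi,\omega}$ and $\chi_{[n,\infty)}\downarrow 0$ has norms bounded away from zero, so the norm fails to be absolutely continuous for every $\varphi$; it is the conjunction of $\varphi\in\Delta_{2}$ with $W(\infty)=\infty$ that gives $E_{\varphi,\omega}=\Lambda_{\varphi,\omega}$ and annihilates the singular part $s$ in the decomposition $f=L_{g}+s$. The hypothesis $W(\infty)=\infty$ is doing real analytic work, not merely keeping the measure space ``non-degenerate at infinity.''
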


\begin{lemma} \cite[Theorem 2.5]{Kaminska199029},
For  $f\in\Lambda_{\varphi, \omega}$, the following condition are equivalent:\\
(1)$\varphi \in \Delta_{2}$,\\
(2)For arbitrary $\varepsilon>0$, there exists $\delta>0$, such that $\rho_{\varphi, \omega}(f)\geq 1-\varepsilon$ whenever $\|f\|_{\varphi, \omega}\geq 1-\delta$,\\
(3)$\rho_{\varphi, \omega}(f)=1$ if and only if $\|f\|_{\varphi, \omega}=1$,\\
(4)$\rho_{\varphi, \omega}(f_{n})\rightarrow 0$ if and only if $\|f_{n}\|_{\varphi, \omega}\rightarrow 0\:(n\rightarrow\infty)$.
\end{lemma}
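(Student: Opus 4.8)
The plan is to show that $\Delta_{2}$ is equivalent to each of the three analytic conditions \emph{separately}, i.e. to prove $(1)\Rightarrow(2)$, $(1)\Rightarrow(3)$, $(1)\Rightarrow(4)$ and then to close the circle by a single construction establishing that $\neg(1)$ forces the simultaneous failure of (2), (3) and (4). Before any of this I would record the \emph{unconditional} modular--norm relations valid for every Orlicz function: from the definition of the Luxemburg norm together with the monotone convergence theorem one gets $\rho_{\varphi,\omega}(f)\le 1\Leftrightarrow\|f\|\le 1$, and convexity of $\rho_{\varphi,\omega}$ with $\rho_{\varphi,\omega}(0)=0$ yields $\rho_{\varphi,\omega}(f)\le\|f\|$ whenever $\|f\|<1$. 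These two facts already supply the easy halves of (3) and (4) for free, namely $\rho_{\varphi,\omega}(f)=1\Rightarrow\|f\|=1$ and $\|f_{n}\|\to 0\Rightarrow\rho_{\varphi,\omega}(f_{n})\to 0$, so that only the ``modular $\Rightarrow$ norm'' halves remain.

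For the forward direction I would exploit the two standard consequences of $\Delta_{2}$: the iterated growth bound $\rho_{\varphi,\omega}(2^{k}f)\le C^{k}\rho_{\varphi,\omega}(f)$, coming pointwise from $\varphi(2t)\le C\varphi(t)$, and the finiteness of $\rho_{\varphi,\omega}(\lambda f)$ for every $\lambda$, which makes $\lambda\mapsto\rho_{\varphi,\omega}(\lambda f)$ continuous by dominated convergence. For $(1)\Rightarrow(4)$, if $\rho_{\varphi,\omega}(f_{n})\to 0$ then choosing $2^{k}\ge 1/\varepsilon$ gives $\rho_{\varphi,\omega}(f_{n}/\varepsilon)\le C^{k}\rho_{\varphi,\omega}(f_{n})\to 0$, hence $\|f_{n}\|\le\varepsilon$ eventually. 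For $(1)\Rightarrow(3)$, if $\|f\|=1$ but $\rho_{\varphi,\omega}(f)<1$, continuity of $\lambda\mapsto\rho_{\varphi,\omega}(\lambda f)$ at $\lambda=1$ produces $\lambda>1$ with $\rho_{\varphi,\omega}(\lambda f)<1$, forcing $\|f\|\le 1/\lambda<1$, a contradiction. For $(1)\Rightarrow(2)$ I would make the uniformity explicit through the pointwise estimate $\varphi(\lambda t)-\varphi(t)\le(\lambda-1)\tfrac{C^{2}}{2}\varphi(t)$ for $1\le\lambda\le 2$, itself derived from $t\,p(2t)\le\tfrac12\varphi(4t)\le\tfrac12 C^{2}\varphi(t)$: assuming (2) fails produces $f_{n}$ with $\|f_{n}\|\uparrow 1$ and $\rho_{\varphi,\omega}(f_{n})<1-\varepsilon_{0}$, and rescaling by $\lambda_{n}=1/\|f_{n}\|\downarrow 1$ gives $\rho_{\varphi,\omega}(\lambda_{n}f_{n})=1$ by (3), while the estimate yields $\rho_{\varphi,\omega}(\lambda_{n}f_{n})-\rho_{\varphi,\omega}(f_{n})\le(\lambda_{n}-1)\tfrac{C^{2}}{2}\rho_{\varphi,\omega}(f_{n})\to 0$, a contradiction. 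Here the uniformity needed in (2) is supplied by the \emph{global} growth bound of $\Delta_{2}$ rather than by any compactness argument.

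The converse, $\neg(1)\Rightarrow$ the failure of (2)--(4), is where the real work lies. Since $\varphi\notin\Delta_{2}$ there is a sequence $u_{k}$ with $\varphi(2u_{k})>2^{k}\varphi(u_{k})>0$, and I would build a non-increasing function $f=\sum_{k}u_{k}\chi_{E_{k}}$ on disjoint adjacent intervals $E_{k}$, choosing the weights $\int_{E_{k}}\omega$ so that $\rho_{\varphi,\omega}(f)=\sum_{k}\varphi(u_{k})\int_{E_{k}}\omega$ is finite and normalised to give $\|f\|=1$ while $\rho_{\varphi,\omega}(\lambda f)=\infty$ for every $\lambda>1$; suitable truncations of $f$ then yield the sequences violating (2) and (4). \textbf{The main obstacle is precisely this construction.} Because we work in a Lorentz space the relevant ``mass'' of a block is the weight integral $\int_{E_{k}}\omega$ rather than Lebesgue measure, so one must place the blocks in decreasing order to guarantee $f^{*}=f$ and hence $\rho_{\varphi,\omega}(f)=\int\varphi(f)\,\omega$ exactly, all while calibrating the $\int_{E_{k}}\omega$ against the $\Delta_{2}$-failure gaps $\varphi(2u_{k})/\varphi(u_{k})$ and respecting $\int_{0}^{\infty}\omega=\infty$. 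Keeping these bookkeeping constraints mutually compatible, rather than any single estimate, is the delicate part.
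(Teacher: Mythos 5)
The paper itself contains no proof of this lemma --- it is imported verbatim from Kami\'{n}ska's Theorem~2.5 (the cited 1990 source) --- so there is no internal argument to compare with, and your proposal must be judged on its own. Its forward half is correct as written: the unconditional relations $\rho_{\varphi,\omega}(f)\le 1\Leftrightarrow\|f\|_{\varphi,\omega}\le 1$ (via left-continuity of $\lambda\mapsto\rho_{\varphi,\omega}(\lambda f)$ by monotone convergence) and $\rho_{\varphi,\omega}(f)\le\|f\|_{\varphi,\omega}$ for $\|f\|_{\varphi,\omega}\le 1$ do give the easy halves of (3) and (4); the iterate $\rho_{\varphi,\omega}(2^{k}f)\le C^{k}\rho_{\varphi,\omega}(f)$, the dominated-convergence continuity of the modular along scaling under $\Delta_{2}$, and your estimate $t\,p(2t)\le\tfrac12\varphi(4t)\le\tfrac{C^{2}}{2}\varphi(t)$ are all sound and yield $(1)\Rightarrow(2),(3),(4)$ exactly as you outline.

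In the converse, however, two concrete repairs are needed, and both sit precisely at the spot you flagged as the main obstacle. First, the sequence you extract from $\neg\Delta_{2}$, namely $\varphi(2u_{k})>2^{k}\varphi(u_{k})$, cannot achieve your stated normalisation ``$\|f\|_{\varphi,\omega}=1$ while $\rho_{\varphi,\omega}(\lambda f)=\infty$ for every $\lambda>1$'': it controls only the dyadic scale, so the block construction gives $\rho_{\varphi,\omega}(2f)=\infty$ and merely $\|f\|_{\varphi,\omega}\in[\tfrac12,1]$, which decides none of (2)--(4). You must instead select, for each $k$, a point $u_{k}$ with $\varphi\bigl((1+\tfrac1k)u_{k}\bigr)>2^{k}\varphi(u_{k})$; such $u_{k}$ exist exactly because a bound $\varphi((1+\tfrac1k)t)\le C_{k}\varphi(t)$ valid for all $t>0$ iterates to the doubling condition. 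With $\int_{E_{k}}\omega=2^{-k-1}/\varphi(u_{k})$ one then gets $\rho_{\varphi,\omega}(f)=\tfrac12$ and, for any $\lambda>1$, $\rho_{\varphi,\omega}(\lambda f)\ge\sum_{k:\,1+1/k\le\lambda}2^{k}\cdot 2^{-k-1}=\infty$, hence $\|f\|_{\varphi,\omega}=1$, refuting (2) and (3). Second, since the paper's $\Delta_{2}$ is global, the failure may occur as $u_{k}\downarrow 0$, and there your truncation device breaks: monotonicity of $f$ forces the small values onto blocks $[a_{k},a_{k+1})$ marching to infinity, and the tail truncation $f\chi_{[a_{n},\infty)}$ has its rearrangement shifted back to the origin, where the decreasing weight is larger, so its modular need not tend to $0$. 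The clean fix --- the very device the paper runs in the necessity part of its Theorem~\ref{3} --- is to refute (4) with origin-anchored single blocks $h_{n}=u_{n}\chi_{[0,c_{n})}$ with $W(c_{n})=2^{-n}/\varphi(u_{n})$: then $\rho_{\varphi,\omega}(h_{n})=2^{-n}\to 0$ while $\rho_{\varphi,\omega}\bigl((1+\tfrac1n)h_{n}\bigr)>1$ forces $\|h_{n}\|_{\varphi,\omega}\ge(1+\tfrac1n)^{-1}\to 1$. With these two adjustments your plan closes, and it is in substance the standard proof from the cited source.
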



For $x\in\Lambda_{\varphi, \omega}^{o}$, define
\begin{align*}
&k^{* }=k^{* }(x)=\inf\{k>0: \rho_{\psi, \omega}(p(kx))\geq 1\}.\\
&k^{**}=k^{**}(x)=\sup\{k>0: \rho_{\psi, \omega}(p(kx))\leq 1\}.\\
&K(x)=[K^{*}(x), k^{**}(x)]
\end{align*}
\begin{lemma}\cite[Theorem 3.1, Theorem 3.2]{Foralewski2023}
Let $\varphi$ be an Orlicz function and $\omega$ be a decreasing weight. $x\in\Lambda_{\varphi, \omega}^{o}$ and $K(x)\neq \emptyset$.
Then $x=\frac{1}{k}\left( 1+\rho_{\varphi, \omega}(kx) \right)$ if and only if $k\in K(x)$.
\end{lemma}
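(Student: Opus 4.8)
The plan is to analyze the scalar function
\begin{equation*}
g(k)=\frac{1}{k}\left(1+\rho_{\varphi, \omega}(kx)\right), \qquad k>0,
\end{equation*}
whose infimum over $k>0$ equals $\|x\|_{\varphi, \omega}^{o}$ by the Amemiya formula, and to show that the set of $k$ at which this infimum is attained is exactly $K(x)$. First I would record the structural facts about $k\mapsto\rho_{\varphi, \omega}(kx)=\int_{0}^{\infty}\varphi(kx^{*}(t))\omega(t)\,dt$: since $\varphi$ is convex and non-decreasing and $x^{*}\geq 0$, this map is convex and non-decreasing on the interval of $k$ where it is finite, and the hypothesis $K(x)\neq\emptyset$ guarantees that this interval is nondegenerate. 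Differentiating under the integral sign gives the one-sided derivatives
\begin{equation*}
\frac{d^{+}}{dk}\rho_{\varphi, \omega}(kx)=\int_{0}^{\infty}p(kx^{*})x^{*}\omega, \qquad \frac{d^{-}}{dk}\rho_{\varphi, \omega}(kx)=\int_{0}^{\infty}p_{-}(kx^{*})x^{*}\omega,
\end{equation*}
the interchange of limit and integral being justified by the monotonicity of the difference quotients together with monotone convergence.

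The central computation is to evaluate the one-sided derivatives of $g$ and simplify them via the Young equality. From
\begin{equation*}
g'_{+}(k)=\frac{1}{k^{2}}\left(k\,\frac{d^{+}}{dk}\rho_{\varphi, \omega}(kx)-1-\rho_{\varphi, \omega}(kx)\right)
\end{equation*}
I would substitute the equality case $kx^{*}\,p(kx^{*})=\varphi(kx^{*})+\psi(p(kx^{*}))$, which holds pointwise because $p(u)$ lies in the admissible interval for Young equality. Integrating against $\omega$ converts $k\,\frac{d^{+}}{dk}\rho_{\varphi, \omega}(kx)$ into $\rho_{\varphi, \omega}(kx)+\rho_{\psi, \omega}(p(kx))$, so that the $\rho_{\varphi, \omega}(kx)$ terms cancel and
\begin{equation*}
g'_{+}(k)=\frac{1}{k^{2}}\left(\rho_{\psi, \omega}(p(kx))-1\right), \qquad g'_{-}(k)=\frac{1}{k^{2}}\left(\rho_{\psi, \omega}(p_{-}(kx))-1\right),
\end{equation*}
the second identity following in the same way with $p_{-}$ in place of $p$.

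Finally I would read off the minimizers. Since $p$ is non-decreasing, $k\mapsto\rho_{\psi, \omega}(p(kx))$ is non-decreasing, so $g'$ changes sign at most once and $g$ attains its infimum precisely on the set where $g'_{-}(k)\leq 0\leq g'_{+}(k)$, that is, where
\begin{equation*}
\rho_{\psi, \omega}(p_{-}(kx))\leq 1\leq \rho_{\psi, \omega}(p(kx)).
\end{equation*}
Invoking the monotonicity of $k\mapsto\rho_{\psi, \omega}(p(kx))$ together with the definitions $k^{*}=\inf\{k>0:\rho_{\psi, \omega}(p(kx))\geq 1\}$ and $k^{**}=\sup\{k>0:\rho_{\psi, \omega}(p(kx))\leq 1\}$, this set is exactly the closed interval $[k^{*},k^{**}]=K(x)$: for $k<k^{*}$ one has $g'_{+}(k)<0$ and for $k>k^{**}$ one has $g'_{-}(k)>0$, so no minimizer can lie outside $K(x)$. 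I expect the main obstacle to be the analytic bookkeeping at the two endpoints, namely justifying the differentiation under the integral near the upper end of the finite range of $k$ (where $\rho_{\varphi, \omega}(kx)$ may tend to infinity), and matching the weak and strict inequalities at $k^{*}$ and $k^{**}$ to the right- and left-continuity of $k\mapsto\rho_{\psi, \omega}(p(kx))$ so that both endpoints are genuinely included in the minimizing set.
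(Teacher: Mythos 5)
Your plan is correct and is essentially the standard argument behind this lemma, which the paper does not prove internally but quotes from \cite{Foralewski2023} (where the statement reads $\|x\|_{\varphi, \omega}^{o}=\frac{1}{k}\left(1+\rho_{\varphi, \omega}(kx)\right)$, fixing the typo here): one minimizes $g(k)=\frac{1}{k}\left(1+\rho_{\varphi, \omega}(kx)\right)$, computes the one-sided derivatives, and simplifies them via Young's equality, the whole computation transferring verbatim from the Orlicz case because $\rho_{\varphi, \omega}(kx)=\int_{0}^{\infty}\varphi(kx^{*}(t))\omega(t)dt$ depends on $k$ only through the fixed function $x^{*}$ over the measure $\omega(t)dt$. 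The endpoint bookkeeping you flag does close: monotone convergence gives the right-continuity of $k\mapsto\rho_{\psi, \omega}(p(kx))$, the identity $\rho_{\psi, \omega}(p_{-}(kx))=\lim_{h\uparrow k}\rho_{\psi, \omega}(p(hx))$, and the left-continuity of $g$ (which forces $\rho_{\varphi, \omega}(k^{**}x)<\infty$), whence $g$ is strictly decreasing on $(0,k^{*})$, constant on $[k^{*},k^{**}]$, and strictly increasing on $(k^{**},\infty)$ --- and note that $g$ need not be convex (take $\varphi$ piecewise linear), so it is exactly this sign analysis via the monotonicity of $\rho_{\psi, \omega}(p(kx))$, which you correctly invoke, rather than convexity of $g$, that pins the minimizing set to $K(x)$.
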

\begin{lemma}\cite[Theorem 2.8]{Wang2024}
For arbitrary $f\in \mathcal{M}_{\varphi, \omega}^{o}$,
if there exists $k>0$ such that
$$
\int_{R^{+}}\psi(p(\frac{kf^{*}(t)}{\omega^{f^{*}}}))\omega^{f^{*}}(t)=1.
$$
Then
$$
\|f\|_{\mathcal{M}_{\varphi, \omega}}^{o}=\frac{1}{k}\left(1+P_{\varphi, \omega}(kf)\right).
$$
\end{lemma}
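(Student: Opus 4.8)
The plan is to show that the function $k\mapsto \frac1k\bigl(1+P_{\varphi,\omega}(kf)\bigr)$ attains its infimum exactly at the prescribed value, so that the Amemiya formula $\|f\|_{\mathcal{M}_{\varphi,\omega}}^{o}=\inf_{k>0}\frac1k(1+P_{\varphi,\omega}(kf))$ collapses to the claimed expression. Throughout I write $k_{0}$ for the value furnished by the hypothesis and abbreviate $u=k_{0}f^{*}/\omega^{f^{*}}$. Since $f^{*}$ is non-increasing we have $\omega^{f^{*}}=\omega$, and my first step would be to replace the infimum in the definition of $P_{\varphi,\omega}$ by its explicit value: the level--function theory guarantees that $\inf\{\int_{R^{+}}\varphi(kf^{*}/|g|):g\prec\omega\}$ is attained at $g=\omega^{f^{*}}=\omega$, so that $P_{\varphi,\omega}(kf)=\int_{R^{+}}\varphi(kf^{*}/\omega)\,\omega\,dt$. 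This reduction is what converts the abstract modular into a single integral amenable to a Young inequality/equality argument.

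The inequality $\|f\|_{\mathcal{M}_{\varphi,\omega}}^{o}\le \frac{1}{k_{0}}(1+P_{\varphi,\omega}(k_{0}f))$ is immediate on taking $k=k_{0}$ in the Amemiya infimum, so the whole content lies in the reverse inequality, i.e. in proving $\frac1k(1+P_{\varphi,\omega}(kf))\ge \frac{1}{k_{0}}(1+P_{\varphi,\omega}(k_{0}f))$ for every $k>0$. For this I would apply the Young inequality $\varphi(kf^{*}/\omega)\ge (kf^{*}/\omega)\,p(u)-\psi(p(u))$ pointwise, integrate against $\omega\,dt$, and invoke the hypothesis $\int_{R^{+}}\psi(p(u))\,\omega\,dt=1$ to cancel the $\psi$-term against the constant $1$. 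This yields $1+P_{\varphi,\omega}(kf)\ge k\int_{R^{+}}f^{*}p(u)\,dt$, hence $\frac1k(1+P_{\varphi,\omega}(kf))\ge \int_{R^{+}}f^{*}p(u)\,dt$, a lower bound independent of $k$.

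It then remains to identify the right--hand side with the value at $k_{0}$. Because $p$ is the right derivative of $\varphi$, the Young inequality becomes an equality at the argument $u$, that is $\varphi(u)+\psi(p(u))=u\,p(u)$; integrating this identity against $\omega\,dt$ and again using the hypothesis gives $\frac{1}{k_{0}}(1+P_{\varphi,\omega}(k_{0}f))=\int_{R^{+}}f^{*}p(u)\,dt$. Combining the two estimates shows the infimum is attained at $k_{0}$ and equals this common value, which is exactly the assertion.

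The delicate points, rather than the routine estimates, are the following. First, the reduction step must be justified with care: one has to know that the infimum defining $P_{\varphi,\omega}$ is genuinely attained at the inverse level function $\omega^{f^{*}}$ and that the resulting integral is finite, which is where the membership $f\in\mathcal{M}_{\varphi,\omega}^{o}$ together with the hypothesis enters. Second, since $\varphi$ is only assumed to be an Orlicz function and not an $N$-function, I must use the right derivative $p$ and the precise equality clause of Young's inequality, namely $uv=\varphi(u)+\psi(v)$ for $v\in[p_{-}(u),p(u)]$, to be sure that the choice $v=p(u)$ really produces equality almost everywhere; the flat pieces of $\omega$ and the affine intervals of $\varphi$ have to be checked not to spoil this. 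I expect the attainment of the infimum in the reduction and the careful handling of these degeneracies to be the main obstacle, the Young inequality/equality manipulation itself being essentially mechanical.
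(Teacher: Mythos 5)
Your proposal is correct in substance and follows the same route as the source the paper cites for this lemma (the paper imports it from \cite[Theorem 2.8]{Wang2024} without proof): the Amemiya formula for $\|\cdot\|_{\mathcal{M}_{\varphi,\omega}^{o}}$, the representation of $P_{\varphi,\omega}$ via the inverse level function, Young's inequality for arbitrary $k$, and the equality case $u\,p(u)=\varphi(u)+\psi(p(u))$ at $k=k_{0}$.

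One correction, though. Your reduction asserts $\omega^{f^{*}}=\omega$ because $f^{*}$ is non-increasing. The paper makes the same remark, but it is false in general: a level interval of $f^{*}$ with respect to $\omega$ exists wherever $f^{*}/\omega$ fails to be non-increasing. For instance, with $f^{*}=\chi_{[0,2]}$ and $\omega=2\chi_{[0,1]}+\chi_{(1,\infty)}$ one checks that $(0,2)$ is a maximal level interval, so $\omega^{f^{*}}=3/2$ on $(0,2)\neq\omega$. Fortunately the identification is never needed: your argument runs verbatim with the fixed weight $g=\omega^{f^{*}}$, since the infimum defining $P_{\varphi,\omega}(kf)$ is attained at this same $g$ for every $k>0$ (level intervals are scale-invariant, so $\omega^{(kf)^{*}}=\omega^{f^{*}}$), and the hypothesis is stated exactly with this $g$. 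With that substitution your two estimates, $1+P_{\varphi,\omega}(kf)\geq k\int_{R^{+}}f^{*}p(u)\,dt$ for all $k>0$ and equality at $k_{0}$, are valid; finiteness of $\int_{R^{+}}f^{*}p(u)\,dt$ follows by applying the first estimate to any $k$ with $P_{\varphi,\omega}(kf)<\infty$, which exists precisely because $f\in\mathcal{M}_{\varphi,\omega}^{o}$. Your worry about flat pieces of $\omega$ or affine intervals of $\varphi$ spoiling the equality case is unfounded: $v=p(u)$ always lies in $[p_{-}(u),p(u)]$, so Young's equality holds pointwise a.e.\ without further checking.
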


For arbitrary $f\in \mathcal{M}_{\varphi, \omega}$, define
\begin{align*}
k^{*}_{\mathcal{M}} &=\inf\{k>0: \int_{0}^{\infty}\psi(p(\frac{kf^{*}(t)}{\omega^{f^{*}}(t)}))\omega^{f^{*}}(t)dt\geq 1\},\\
k^{**}_{\mathcal{M}}&=\sup\{k>0: \int_{0}^{\infty}\psi(p(\frac{kf^{*}(t)}{\omega^{f^{*}}(t)}))\omega^{f^{*}}(t)dt\leq 1\}.
\end{align*}
Obviously, $k^{*}_{\mathcal{M}}\leq k^{**}_{\mathcal{M}}$. Let $K_{\mathcal{M}}=[k_{\mathcal{M}}^{*}, k^{**}_{\mathcal{M}}]$. If $k^{**}_{\mathcal{M}}<\infty$,
Then $K_{\mathcal{M}}\neq \emptyset$.
We will discuss the norm expression $\|\cdot\|_{\mathcal{M}_{\varphi, \omega}^{o}}$ generated by arbitrary Orlicz function in Theorem \ref{Norminkothedual}.

\begin{lemma}\cite[Theorem 9, p.g. 36]{kan1982},
For arbitrary $f\in (\Lambda_{\varphi, \omega}^{o})^{*}$, f has a unique decomposition
$$
f=L_{g}+s,\;\;g\in \mathcal{M}_{\psi, \omega},\: s\in F
$$
where s is singular, $F$ is the set of singular functionals at $x$.
\end{lemma}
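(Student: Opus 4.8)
The plan is to realise this decomposition as an instance of the general Yosida--Hewitt (Ando) splitting of the dual of a Banach function space into its order-continuous and singular parts, and then to identify the order-continuous part with the Köthe dual by invoking Lemma 1. Throughout I would write $X=\Lambda_{\varphi,\omega}^{o}$ and let $E=E_{\varphi,\omega}$ denote the closed ideal of elements with order-continuous (absolutely continuous) norm, interpreting $F$ as the space of \emph{singular} functionals, namely those in $X^{*}$ that vanish on $E$. Since the underlying resonant, $\sigma$-finite measure space carries all simple functions supported on sets of finite measure inside $E$, these test functions will be available for the separation argument in the uniqueness step.

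For existence, I would first restrict an arbitrary $f\in X^{*}$ to the subspace $E$. On $E$ the norm is order continuous, so every bounded functional on $E$ is represented by integration against an element of the associate space; concretely, Lemma 1(1) identifies the Köthe dual of $(\Lambda_{\varphi,\omega},\|\cdot\|_{\varphi,\omega}^{o})$ with $(\mathcal{M}_{\psi,\omega},\|\cdot\|_{\mathcal{M}_{\varphi,\omega}})$, so there exists $g\in\mathcal{M}_{\psi,\omega}$ with $f(h)=\int_{0}^{\infty}h(t)g(t)\,dt=L_{g}(h)$ for every $h\in E$ and with $\|L_{g}\|=\|g\|_{\mathcal{M}_{\varphi,\omega}}$. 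I would then set $s:=f-L_{g}\in X^{*}$. Because $f$ and $L_{g}$ agree on $E$, the difference $s$ annihilates $E$, i.e. $s$ is singular, which yields the claimed decomposition $f=L_{g}+s$ with $g\in\mathcal{M}_{\psi,\omega}$ and $s\in F$.

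To get uniqueness, I would suppose $f=L_{g_{1}}+s_{1}=L_{g_{2}}+s_{2}$ with $g_{i}\in\mathcal{M}_{\psi,\omega}$ and $s_{i}\in F$, so that $L_{g_{1}-g_{2}}=s_{2}-s_{1}$ is simultaneously regular and singular. Testing against characteristic functions $\chi_{A}$ of sets $A$ of finite measure, which lie in $E$, gives $\int_{A}(g_{1}-g_{2})\,dt=(s_{2}-s_{1})(\chi_{A})=0$ for all such $A$; hence $g_{1}=g_{2}$ a.e., and consequently $s_{1}=s_{2}$. This establishes that the two summands are uniquely determined.

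The step I expect to be the main obstacle is the representation claim itself: verifying that a bounded functional on $E_{\varphi,\omega}$ is genuinely \emph{given} by integration against a Köthe-dual element with matching norm, rather than merely being dominated by one. This is precisely where the identification of the associate space in Lemma 1 and the order continuity of the norm on $E_{\varphi,\omega}$ carry the argument, and some care is needed because $\varphi$ is only assumed to be an arbitrary Orlicz function; in general $\varphi\notin\Delta_{2}$ and hence $E_{\varphi,\omega}\subsetneq\Lambda_{\varphi,\omega}^{o}$, which is exactly the regime in which the singular part $s$ can be nonzero and the decomposition becomes substantive.
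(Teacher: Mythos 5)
Your argument is essentially correct, but note that the paper does not prove this lemma at all: it is imported verbatim from Kantorovich--Akilov \cite[Theorem~9, p.~36]{kan1982}, so there is no in-paper proof to match, and what you have written is a reconstruction of the classical argument. Your route (restrict $f$ to the order-continuous ideal $E_{\varphi,\omega}$, represent the restriction by a K\"othe-dual element, and declare the remainder singular) is the Yosida--Hewitt-style proof; the cited source instead obtains the splitting lattice-theoretically, by showing that the integral functionals form a band in the Dedekind complete order dual of the ideal space and taking the band projection. The two are equivalent here, and your version has the advantage of being concrete for this pair of spaces, while the band-projection argument buys generality (it needs no explicit description of $E$ or of the associate space). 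Two points in your sketch deserve tightening. First, the step you flag as the main obstacle decomposes into a Radon--Nikodym argument (set $\nu(A)=f(\chi_A)$ for $\mu A<\infty$; order continuity of the norm on $E_{\varphi,\omega}$ gives countable additivity, whence $\nu(A)=\int_A g\,dt$, and density of simple functions in $E_{\varphi,\omega}$ extends this to all $h\in E_{\varphi,\omega}$) \emph{plus} the identification $E_{\varphi,\omega}'=(\Lambda_{\varphi,\omega}^{o})'=\mathcal{M}_{\psi,\omega}$ with equal norms, which follows from the Fatou property since every $h\in\Lambda_{\varphi,\omega}$ is an increasing a.e.\ limit of elements of $E_{\varphi,\omega}$; without this last point you only know $g$ acts boundedly on $E$, not that $g\in\mathcal{M}_{\psi,\omega}$ and $L_g$ is bounded on all of $X$. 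Second, your definition of $F$ as the annihilator of $E_{\varphi,\omega}$ agrees with ``singular'' in the lattice sense only because $E_{\varphi,\omega}$ has full carrier (it contains $\chi_A$ for every $A$ of finite measure, since $\varphi(\lambda)W(\mu A)<\infty$ for all $\lambda>0$); this is worth saying explicitly, and it is also exactly what makes your uniqueness step legitimate, together with the local integrability of $g_1-g_2$, which follows from H\"older's inequality applied to $\chi_A\in\Lambda_{\varphi,\omega}^{o}$. Finally, a cosmetic slip: the dual norm on $\mathcal{M}_{\psi,\omega}$ should carry the subscript $\psi$, i.e.\ $\|L_g\|=\|g\|_{\mathcal{M}_{\psi,\omega}}$ (you copied the paper's own typo in Lemma~1).
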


\begin{lemma}\cite[Theorem 2.48]{Chen1996}
For $x\in \Lambda_{\varphi, \omega}$ satisfying $\theta(x)\neq 0$,  there exist two singular functionals $s_{1}$ and $s_{2}$ such that  $s_{1}\neq s_{2}$ and $s_{1}(x)=s_{2}(x)=\theta(x)$.
\end{lemma}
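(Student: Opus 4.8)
The plan is to represent the singular functionals as the annihilator of the order-continuous part and then to produce the two functionals by adding to one norming functional a suitable singular functional that kills $x$. Recall that a functional is singular exactly when it vanishes on $E_{\varphi,\omega}$, so by the decomposition lemma the set $F$ of singular functionals is precisely the annihilator $E_{\varphi,\omega}^{\perp}$, a closed subspace of $(\Lambda_{\varphi,\omega})^{*}$ isometric to $(\Lambda_{\varphi,\omega}/E_{\varphi,\omega})^{*}$. It therefore suffices to find (i) one $s_{1}\in F$ with $s_{1}(x)=\theta(x)$, and (ii) one \emph{nonzero} $s_{0}\in F$ with $s_{0}(x)=0$; then $s_{2}:=s_{1}+s_{0}$ lies in $F$, satisfies $s_{2}(x)=\theta(x)$, and $s_{2}\neq s_{1}$, which is exactly the claim.

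For (i) I would first establish the distance formula
\[
\theta(x)=\operatorname{dist}(x,E_{\varphi,\omega}).
\]
The inequality ``$\le$'' is obtained by truncation: for $\lambda>\theta(x)$ one has $\rho_{\varphi,\omega}(x/\lambda)<\infty$, and the truncations $x_{N}=x\,\mathbf 1_{\{|x|\le N\}}$ belong to $E_{\varphi,\omega}$ with $\rho_{\varphi,\omega}((x-x_{N})/\lambda)\to0$, so $\operatorname{dist}(x,E_{\varphi,\omega})\le\lambda$. For ``$\ge$'', given $y\in E_{\varphi,\omega}$ and $\mu>\|x-y\|$, I would combine the Hardy--Littlewood submajorization $\int_{0}^{t}x^{*}\le\int_{0}^{t}(x-y)^{*}+\int_{0}^{t}y^{*}$ (valid since $|x|\le|x-y|+|y|$), the monotonicity of the modular under submajorization (where $\omega$ decreasing is used), and a weighted convexity splitting with some $\alpha\in(\|x-y\|/\mu,\,1)$ to conclude $\rho_{\varphi,\omega}(x/\mu)<\infty$; letting $\mu\downarrow\|x-y\|$ and taking the infimum over $y$ yields $\theta(x)\le\operatorname{dist}(x,E_{\varphi,\omega})$. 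Since $E_{\varphi,\omega}$ is a closed subspace and $\theta(x)\neq0$ places $x$ outside it, the Hahn--Banach corollary on the distance to a subspace furnishes $s_{1}\in(\Lambda_{\varphi,\omega})^{*}$ with $\|s_{1}\|=1$, $s_{1}|_{E_{\varphi,\omega}}=0$ (so $s_{1}\in F$), and $s_{1}(x)=\operatorname{dist}(x,E_{\varphi,\omega})=\theta(x)$.

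For (ii) it is enough to show $\dim F\ge2$, for then the evaluation $s\mapsto s(x)$ is a single scalar condition on $F$ and its kernel in $F$ is nonzero, so any nonzero $s_{0}$ in that kernel works. I would in fact show that $\Lambda_{\varphi,\omega}/E_{\varphi,\omega}$ is infinite-dimensional: because $\theta(x)\neq0$ forces $x^{*}$ to be unbounded near the origin (or to have a critically slow tail), the critical region of $x$ can be split, through the measure-preserving transformation $\sigma$ with $x=x^{*}\circ\sigma$, into countably many disjoint sets $S_{1},S_{2},\dots$ on each of which $x$ remains non--order-continuous, i.e. $x\mathbf 1_{S_{j}}\notin E_{\varphi,\omega}$. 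Disjointness of supports makes the cosets $[x\mathbf 1_{S_{j}}]$ linearly independent in the quotient, so $\dim(\Lambda_{\varphi,\omega}/E_{\varphi,\omega})=\infty$ and hence $\dim F=\infty$, giving the required $s_{0}$. The main obstacle is the reverse inequality in the distance formula together with the companion claim that the pieces $x\mathbf 1_{S_{j}}$ inherit the non--order-continuity of $x$: both hinge on controlling the rearrangement-invariant modular $\int\varphi((\cdot)^{*}/\lambda)\,\omega$ under splitting, where $\omega$ is applied \emph{after} rearrangement, so the passage through $\sigma$ and the submajorization estimates must be handled with care; by contrast the Hahn--Banach step and the final linear-algebra argument are routine.
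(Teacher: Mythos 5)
The paper offers no proof of this lemma at all --- it is imported verbatim from \cite{Chen1996} (Theorem 2.48) --- so your argument has to stand on its own. Your architecture (identify the singular functionals $F$ with the annihilator $E_{\varphi,\omega}^{\perp}$, produce one $s_{1}$ by the Hahn--Banach distance corollary from $d(x,E_{\varphi,\omega})=\theta(x)$, then perturb by a nonzero $s_{0}\in F$ annihilating $x$) is the standard route and does deliver the statement as written, which imposes no normalization on $s_{1},s_{2}$. Note also that the distance formula you re-derive is already quoted in the paper as a lemma ($d(f)=d^{o}(f)=\theta(f)$, citing \cite{Chen1996} and \cite{Wang2024}), so all of your step (i) before the Hahn--Banach application could simply be a citation.

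There are, however, two concrete soft spots. First, your claim that the truncations $x_{N}=x\mathbf{1}_{\{|x|\le N\}}$ lie in $E_{\varphi,\omega}$ fails in general: the measure space is $(0,\infty)$ with $W(\infty)=\infty$, and when $\varphi\notin\Delta_{2}$ near zero a \emph{bounded} $x$ with slowly decaying tail can have $\rho_{\varphi,\omega}(x)<\infty$ but $\rho_{\varphi,\omega}(2x)=\infty$, in which case $x_{N}=x\notin E_{\varphi,\omega}$ for large $N$. You must truncate in support as well, e.g. $x_{N}=x\mathbf{1}_{\{1/N\le|x|\le N\}}$ (a set of finite measure since $x^{*}(t)\to 0$, which follows from $W(\infty)=\infty$); then $|x-x_{N}|\downarrow 0$ and dominated convergence gives $\rho_{\varphi,\omega}((x-x_{N})/\lambda)\to 0$ for each $\lambda>\theta(x)$. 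Second, step (ii) asserts but does not prove the splitting into pieces $S_{j}$ with $x\mathbf{1}_{S_{j}}\notin E_{\varphi,\omega}$; this is the real content, and the difficulty you flag (the weight acting after rearrangement) is resolved as follows. For $g=x^{*}\chi_{\sigma(A)}$ one has $(x\mathbf{1}_{A})^{*}=g^{*}$, and the Hardy--Littlewood inequality $\int h\,\omega\le\int h^{*}\,\omega$ (with $\omega^{*}=\omega$ decreasing) applied to $h=\varphi(g/\lambda)$ gives $\rho_{\varphi,\omega}(x\mathbf{1}_{A}/\lambda)\ge\int_{\sigma(A)}\varphi(x^{*}/\lambda)\,\omega\,dt=:\nu_{\lambda}(A)$. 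For fixed $\lambda<\theta(x)$, $\nu_{\lambda}$ is a $\sigma$-finite non-atomic measure with $\nu_{\lambda}(\mathrm{supp}\,x)=\rho_{\varphi,\omega}(x/\lambda)=\infty$, and any such set splits into two (indeed countably many) disjoint pieces of infinite $\nu_{\lambda}$-measure, whence $\theta(x\mathbf{1}_{S_{j}})\ge\lambda>0$. With these two repairs your ideal-property/linear-independence and kernel arguments go through; as written, though, the proof contains one failing step and one unexecuted key step.
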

For $f\in(\Lambda_{\varphi, \omega})^{*}$, let  $\|\cdot\|=\|\cdot\|_{(\Lambda_{\varphi, \omega}^{o})^{*}}$ and $\|\cdot\|^{o}=\|\cdot\|_{(\Lambda_{\varphi, \omega})^{*}}$.
\begin{lemma}\cite[Corollary 1.49]{Chen1996}
$\|f\|=\|f\|^{o}$ if and only if $f\in F$.
\end{lemma}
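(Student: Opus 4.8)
The plan is to compare the two dual norms through the canonical splitting of a functional into its regular and singular parts, and to show that all the discrepancy between $\|\cdot\|$ and $\|\cdot\|^{o}$ is carried by the regular part. First I would record the soft inequality $\|f\|\le\|f\|^{o}$: since the predual norms satisfy $\|x\|_{\varphi,\omega}\le\|x\|_{\varphi,\omega}^{o}$, the unit ball of $\Lambda_{\varphi,\omega}^{o}$ sits inside that of $\Lambda_{\varphi,\omega}$, and taking suprema over the respective balls gives $\|f\|=\|f\|_{(\Lambda_{\varphi,\omega}^{o})^{*}}\le\|f\|_{(\Lambda_{\varphi,\omega})^{*}}=\|f\|^{o}$. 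Next I would write $f=L_{g}+s$ with $g\in\mathcal{M}_{\psi,\omega}$, $s\in F$ as in the cited decomposition, and use the classical fact that the regular functionals form an $L$-summand of the dual, so the splitting is additive for any lattice norm; in particular $\|f\|^{o}=\|L_{g}\|^{o}+\|s\|^{o}$ and $\|f\|=\|L_{g}\|+\|s\|$. Subtracting, $\|f\|^{o}-\|f\|=(\|L_{g}\|^{o}-\|L_{g}\|)+(\|s\|^{o}-\|s\|)$, so the theorem reduces to two assertions: the singular term vanishes, and the regular term is strictly positive unless $g=0$.

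For the singular term I would prove that both dual norms agree on $F$, and the engine is the identity that the quotient norm on $\Lambda_{\varphi,\omega}/E_{\varphi,\omega}$ equals $\theta$ for \emph{both} norms, i.e. $\mathrm{dist}(x,E_{\varphi,\omega})=\mathrm{dist}^{o}(x,E_{\varphi,\omega})=\theta(x)$. The lower bound $\ge\theta(x)$ is immediate since $\theta$ is a seminorm vanishing on $E_{\varphi,\omega}$ and dominated by either norm. For the upper bound I would pick truncations $e_{n}\in E_{\varphi,\omega}$ cutting both the large values and the tail of $x$, so that $|x-e_{n}|\downarrow 0$ pointwise; then for each $k<1/\theta(x)$ dominated convergence gives $\rho_{\varphi,\omega}(k(x-e_{n}))\to 0$, whence $\|x-e_{n}\|_{\varphi,\omega}\le 1/k$ eventually and, via the Amemiya formula, $\|x-e_{n}\|_{\varphi,\omega}^{o}\le\frac1k\bigl(1+\rho_{\varphi,\omega}(k(x-e_{n}))\bigr)\to 1/k$ as well; letting $k\uparrow 1/\theta(x)$ forces both distances down to $\theta(x)$. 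Since $s$ annihilates $E_{\varphi,\omega}$, it factors through the quotient, which carries the same norm $\theta$ in both cases, so $\|s\|=\|s\|^{o}$.

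For the regular term I would use the Köthe-dual identifications $\|L_{g}\|=\|g\|_{\mathcal{M}_{\psi,\omega}}$ and $\|L_{g}\|^{o}=\|g\|_{\mathcal{M}_{\psi,\omega}^{o}}$ and prove the strict inequality $\|g\|_{\mathcal{M}_{\psi,\omega}}<\|g\|_{\mathcal{M}_{\psi,\omega}^{o}}$ for every $g\ne 0$. Normalizing $\|g\|_{\mathcal{M}_{\psi,\omega}}=1$ and setting $\Phi(k)=P_{\psi,\omega}(kg)$, one has $\Phi$ convex with $\Phi(k)>0$ for $k>0$ and $\Phi(k)>1$ for $k>1$; because $\varphi$ is finite-valued its complementary $\psi$ is super-linear, so $\Phi(k)/k\to\infty$ and the Amemiya infimum $\inf_{k>0}\frac1k(1+\Phi(k))$ is attained at some finite $k_{0}>0$. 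A short convexity argument, splitting according to whether $\Phi(1)=1$ or $\Phi(1)<1$ (the continuous versus the jump case $\theta_{\mathcal{M}}(g)=1$), then shows $\frac{1}{k_{0}}(1+\Phi(k_{0}))>1$, i.e. $\|g\|_{\mathcal{M}_{\psi,\omega}^{o}}>1=\|g\|_{\mathcal{M}_{\psi,\omega}}$. Combining the three ingredients, $\|f\|^{o}-\|f\|=\|L_{g}\|^{o}-\|L_{g}\|$ vanishes precisely when $g=0$, i.e. precisely when $f=s\in F$.

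I expect the main obstacle to be the exact identity $\mathrm{dist}^{o}(x,E_{\varphi,\omega})=\theta(x)$. The elementary equivalence $\|\cdot\|_{\varphi,\omega}\le\|\cdot\|_{\varphi,\omega}^{o}\le 2\|\cdot\|_{\varphi,\omega}$ only pins this distance down up to a factor of $2$, which is not enough to conclude $\|s\|=\|s\|^{o}$; the sharp value has to be squeezed out of the Amemiya representation through the dominated-convergence estimate above, and some care is required because the modular $\rho_{\varphi,\omega}$ is computed on the decreasing rearrangement of $x-e_{n}$ rather than on $x-e_{n}$ itself, so one must check that $|x-e_{n}|\downarrow 0$ propagates to $(x-e_{n})^{*}\downarrow 0$ before invoking dominated convergence.
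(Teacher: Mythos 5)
Your architecture --- split $f=L_{g}+s$, use the additivity $\|f\|=\|L_{g}\|+\|s\|$, $\|f\|^{o}=\|L_{g}\|^{o}+\|s\|^{o}$, prove $\|s\|=\|s\|^{o}$ via the quotient identification and the identity $d(x,E_{\varphi,\omega})=d^{o}(x,E_{\varphi,\omega})=\theta(x)$, then a strict inequality $\|g\|_{\mathcal{M}_{\psi,\omega}}<\|g\|_{\mathcal{M}_{\psi,\omega}^{o}}$ for $g\neq 0$ --- is exactly the classical route behind the cited Corollary 1.49 of Chen (the paper offers no proof of its own, only the citation), and your first two ingredients, including the truncation/dominated-convergence treatment of $d^{o}(x,E_{\varphi,\omega})=\theta(x)$ and the rearrangement caveat you flag, are sound. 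The genuine gap is in the third ingredient: you assert that $\Phi(k)=P_{\psi,\omega}(kg)>0$ for all $k>0$, and this is precisely where the $N$-function hypothesis --- the hypothesis this paper explicitly drops --- enters silently. If $\lim_{t\to 0^{+}}\varphi(t)/t=p_{0}>0$, then $\psi\equiv 0$ on $[0,p_{0}]$ and the modular of a nonzero element can vanish. Concretely, take $\varphi(t)=e^{t}-1$ (so $\psi(s)=0$ for $s\leq 1$ and $\psi(s)>0$ for $s>1$) and $g=\omega$: since $\omega^{g^{*}}=\omega$, one gets $\Phi(k)=\psi(k)W(\infty)$, which equals $0$ for $k\leq 1$ and $\infty$ for $k>1$ (recall $W(\infty)=\infty$ is a standing assumption). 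Hence $\|\omega\|_{\mathcal{M}_{\psi,\omega}}=1$, while the paper's definition of $\|\cdot\|_{\mathcal{M}_{\psi,\omega}^{o}}$ as the Amemiya infimum gives $\|\omega\|_{\mathcal{M}_{\psi,\omega}^{o}}=\inf_{k\leq 1}\frac{1}{k}\left(1+\Phi(k)\right)=1$, attained at $k=1$ with $\Phi(1)=0$. So $f=L_{\omega}$ is a nonzero regular functional with $\|f\|=\|f\|^{o}$: your claimed strict inequality fails, and with it the ``only if'' direction of the lemma, in the generality in which this paper states it.

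What survives of your convexity analysis is in fact sharp and shows the failure is localized at exactly this degeneracy: normalizing $\|g\|_{\mathcal{M}_{\psi,\omega}}=1$, for $k<1$ one has $\frac{1}{k}\left(1+\Phi(k)\right)\geq \frac{1}{k}>1$; for $k>1$ the definition of the Luxemburg-type norm gives $\Phi(k)>1$, and since $\Phi$ is convex with $\Phi(0)=0$, the ratio $\Phi(k)/k$ is nondecreasing, so $\Phi(k)\geq k$ and $\frac{1}{k}\left(1+\Phi(k)\right)\geq \frac{1+k}{k}>1$, while the limit $k\to\infty$ is excluded because $\varphi$ finite-valued forces $\psi(t)/t\to\infty$ (your superlinearity observation is correct and needs no $N$-condition at infinity). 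Thus equality of the two norms at $g\neq 0$ occurs precisely when $P_{\psi,\omega}(g/\|g\|_{\mathcal{M}_{\psi,\omega}})=0$, which is possible if and only if $\psi$ vanishes on a nondegenerate interval, i.e.\ if and only if $\lim_{t\to 0^{+}}\varphi(t)/t>0$. Under the additional hypothesis $\lim_{t\to 0^{+}}\varphi(t)/t=0$ your argument closes completely; without it the statement itself is false, and the paper's citation of Chen's Corollary 1.49 --- proved in that book under its standing $N$-function assumption --- does not cover the arbitrary Orlicz functions considered here.
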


\begin{lemma}\cite[Lemma 2.22]{Wang2024}
	For arbitrary $f\in (\Lambda_{\varphi, \omega})^{*}$, $f=L_{v}+s$, we have $\|f\|=\|v\|_{\mathcal{M}_{\psi, \omega}}+\|s\|$, $\|f\|^{o}= \|v\|_{\mathcal{M}_{\psi, \omega}^{o}}+\|s\|^{o}$\\
\end{lemma}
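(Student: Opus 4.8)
The plan is to prove each identity by establishing the two opposite inequalities, and to observe at the outset two reductions. First, the two statements are mirror images: interchanging the Luxemburg norm $\|v\|_{\mathcal{M}_{\psi,\omega}}$ with the Orlicz norm $\|v\|_{\mathcal{M}_{\psi,\omega}^{o}}$ on the Marcinkiewicz factor, and $\|\cdot\|$ with $\|\cdot\|^{o}$, turns the first into the second, so I would carry out the argument for $\|f\|=\|v\|_{\mathcal{M}_{\psi,\omega}}+\|s\|$ and obtain the other by the same construction. Second, since $s$ is singular, \cite[Corollary 1.49]{Chen1996} gives $\|s\|=\|s\|^{o}$, so the singular contributions in the two identities coincide and only the regular parts genuinely differ.

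The upper bound is immediate. By the triangle inequality $\|f\|=\|L_{v}+s\|\le\|L_{v}\|+\|s\|$, and since $L_{v}(g)=\int_{R^{+}}g(t)v(t)\,dt$ is an integral, hence order--continuous, functional, its dual norm equals its K\"othe--dual norm: by the K\"othe duality \cite[Theorem 8.10]{2019abstractlorentz} one has $\|L_{v}\|_{(\Lambda_{\varphi,\omega}^{o})^{*}}=\|v\|_{\mathcal{M}_{\psi,\omega}}$ and $\|L_{v}\|_{(\Lambda_{\varphi,\omega})^{*}}=\|v\|_{\mathcal{M}_{\psi,\omega}^{o}}$. Hence $\|f\|\le\|v\|_{\mathcal{M}_{\psi,\omega}}+\|s\|$ and $\|f\|^{o}\le\|v\|_{\mathcal{M}_{\psi,\omega}^{o}}+\|s\|^{o}$.

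For the reverse inequality I would produce, for each $\varepsilon>0$, a single test function of norm at most $1$ on which $f$ is nearly $\|v\|_{\mathcal{M}_{\psi,\omega}}+\|s\|$. Because $E_{\varphi,\omega}$ is norming for the integral functional $L_{v}$ (truncate an almost--optimal $g$ to a bounded function of bounded support; $\int_{R^{+}}|gv|<\infty$ by H\"older, so dominated convergence preserves the value), I can choose $y\in E_{\varphi,\omega}$ with $\|y\|^{o}\le1$ and $L_{v}(y)>\|v\|_{\mathcal{M}_{\psi,\omega}}-\varepsilon$; as singular functionals annihilate $E_{\varphi,\omega}$ by the decomposition \cite[Theorem 9, p.g. 36]{kan1982}, we have $f(y)=L_{v}(y)$. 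Next I would choose witnesses $w_{n}$ with $\|w_{n}\|^{o}\le1$ and $s(w_{n})>\|s\|-\varepsilon$; since any function detected by a singular functional must satisfy $\theta(w_{n})>0$ (cf.\ \cite[Theorem 2.48]{Chen1996}) and $W(\infty)=\infty$, I would realize the $w_{n}$ on the tails $[n,\infty)$, making them asymptotically disjoint from the fixed $y$. With $x_{n}=y+w_{n}$ one gets $f(x_{n})=L_{v}(y)+L_{v}(w_{n})+s(w_{n})$, and the aim is $\liminf_{n}f(x_{n})\ge\|v\|_{\mathcal{M}_{\psi,\omega}}+\|s\|-2\varepsilon$ together with $\limsup_{n}\|x_{n}\|^{o}\le1$.

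The main obstacle is precisely this last step, and it is where rearrangement invariance makes the argument harder than in the classical Orlicz case. Disjointness of supports does not by itself control $\|y+w_{n}\|^{o}$, since the norm depends on the decreasing rearrangement $(y+w_{n})^{*}$ rather than on the supports; moreover the cross term $L_{v}(w_{n})=\int_{R^{+}}vw_{n}$ need not vanish, because the regular part $v$ may be a boundary element of $\mathcal{M}_{\psi,\omega}$ (for instance $v=\omega$) whose tail K\"othe norm does not tend to $0$. I would control the norm through the Amemiya expression $\|x\|^{o}=\inf_{k>0}\frac{1}{k}(1+\rho_{\varphi,\omega}(kx))$, using additivity of the modular over disjoint supports and a domination of $(y+w_{n})^{*}$ by the merge of $y^{*}$ and $w_{n}^{*}$, pushing $w_{n}$ far enough out that it cannot inflate the rearrangement of the fixed $y$; and I would neutralize the cross term by exploiting the mutual (band) singularity of $L_{v}$ and $s$, selecting the witnesses $w_{n}$ so that $L_{v}(w_{n})$ is non--negative and hence only aids the estimate. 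Letting $n\to\infty$ and then $\varepsilon\to0$ yields $\|f\|\ge\|v\|_{\mathcal{M}_{\psi,\omega}}+\|s\|$; running the identical construction with the Luxemburg and Orlicz norms interchanged, and invoking $\|s\|=\|s\|^{o}$, gives $\|f\|^{o}\ge\|v\|_{\mathcal{M}_{\psi,\omega}^{o}}+\|s\|^{o}$, completing the proof.
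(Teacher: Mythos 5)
Your skeleton (triangle inequality for the upper bound, then a spliced witness $y+w_{n}$ with $y\in E_{\varphi,\omega}$ nearly norming $L_{v}$ and $w_{n}$ nearly norming $s$) is the standard route for this kind of additivity statement, and your reductions (the two identities are parallel; $\|s\|=\|s\|^{o}$ by \cite[Corollary 1.49]{Chen1996}; $E_{\varphi,\omega}$ is norming for $L_{v}$ via truncation and dominated convergence) are all sound; note the paper itself gives no proof but quotes \cite[Lemma 2.22]{Wang2024}. The genuine gaps are in the two mechanisms you invoke to close the lower bound. First, ``pushing $w_{n}$ far enough out that it cannot inflate the rearrangement of the fixed $y$'' cannot work: rearrangement is invariant under measure-preserving relocation of supports, so for disjointly supported $y$ and $w_{n}$ the function $(y+w_{n})^{*}$ is the decreasing merge of $y^{*}$ and $w_{n}^{*}$ no matter how far out $w_{n}$ lives; distance buys nothing. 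What actually controls the splice is that the remainders $h_{n}=u\chi_{\{|u|>n\}\cup[n,\infty)}$ of a fixed singular witness $u$ satisfy $s(h_{n})=s(u)$, $|h_{n}|\leq|u|$, $h_{n}^{*}\downarrow 0$ pointwise (using $W(\infty)=\infty$, which forces $\mu\{|u|>\lambda\}<\infty$ for every $\lambda>0$), hence $\rho_{\varphi,\omega}(h_{n})\rightarrow 0$ by dominated convergence; this must be combined with subadditivity of the modular over disjoint supports, $\rho_{\varphi,\omega}(y+h)\leq\rho_{\varphi,\omega}(y)+\rho_{\varphi,\omega}(h)$, which in the Orlicz--Lorentz setting is not automatic but follows from the Hardy--Littlewood inequality because $\omega$ is decreasing (split the merge $(y+h)^{*}$ into two pieces equimeasurable with $y^{*}$ and $h^{*}$ and pair each against $\omega=\omega^{*}$), together with a prescaling of $y$ by $1-\eta$ to keep the combined modular at most $1$. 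Second, ``realizing the $w_{n}$ on the tails $[n,\infty)$'' is unjustified: the non-$E_{\varphi,\omega}$ remainder of $u$ contains the peak set $\{|u|>n\}$, which may meet the bounded support of $y$ and cannot be discarded, since singular functionals can charge peaks and not only tails; the repair is to trim $y$ on this overlap of vanishing measure, restoring $L_{v}(y)$ in the limit by dominated convergence.

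Two further points. Your cross-term worry is misdiagnosed and your proposed fix (selecting $w_{n}$ with $L_{v}(w_{n})\geq 0$) is not available, because altering the sign pattern of $w_{n}$ changes $s(w_{n})$ uncontrollably; but the term dies for free: $h_{n}\rightarrow 0$ a.e.\ with $|h_{n}|\leq|u|$ and $\int_{R^{+}}|uv|<\infty$ by H\"older, so $L_{v}(h_{n})\rightarrow 0$ by dominated convergence. Finally, for the identity $\|f\|=\|v\|_{\mathcal{M}_{\psi,\omega}}+\|s\|$ the witnesses are bounded in the Orlicz norm, and your Amemiya estimate $\|y+h_{n}\|_{\varphi,\omega}^{o}\leq\frac{1}{k}\left(1+\rho_{\varphi,\omega}(ky)+\rho_{\varphi,\omega}(kh_{n})\right)$ needs $\rho_{\varphi,\omega}(kh_{n})\rightarrow 0$ at a near-optimal parameter $k$ for $y$; since only $\rho_{\varphi,\omega}(\lambda u)<\infty$ for $\lambda<1/\theta(u)$ is guaranteed, and $\theta(u)$ can equal $1$ for a norming singular witness, this step requires a separate argument about the admissible range of $k$ (this is where the $K$-set machinery of the cited source does real work), which your sketch does not supply. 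In short: right architecture, but both named mechanisms fail as stated, and the ingredients that actually make the splice work (Hardy--Littlewood modular subadditivity, vanishing modular of the remainders, peak-trimming, control of the Amemiya parameter) are missing.
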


\begin{lemma}\cite[Theorem 1.43, Theorem 1.44]{Chen1996}, \cite[Theroem 2.21]{Wang2024}
	For any $f\in \Lambda_{\varphi, \omega}$, $d(f)=d^{o}(f)=\theta(f)$, where
	$$
	d(f)=\inf\{\|f-f_{e}\|: \: f_{e}\in E_{\varphi,\omega}\};\;d^{o}(f)=\inf\{\|f-f_{e}\|_{\varphi, \omega}^{o}:\: f_{e}\in E_{\varphi, \omega}\}.
	$$
\end{lemma}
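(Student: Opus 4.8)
The plan is to establish the two inequalities $\theta(f)\le d(f)$ and $d(f)\le\theta(f)$ (together with their $d^{o}$ analogues) separately, after clearing the trivial case. If $\theta(f)=0$, then $\rho_{\varphi,\omega}(f/\lambda)<\infty$ for every $\lambda>0$, so $f\in E_{\varphi,\omega}$ and taking $f_{e}=f$ gives $d(f)=d^{o}(f)=0=\theta(f)$. So I assume $\theta(f)>0$. Two standing facts will be used throughout. First, since $\omega$ is a weight function we have $W(\infty)=\infty$, which forces $f^{*}(\infty):=\lim_{t\to\infty}f^{*}(t)=0$ for every $f\in\Lambda_{\varphi,\omega}$ (otherwise $\rho_{\varphi,\omega}(\lambda f)\ge\varphi(\lambda f^{*}(\infty))W(\infty)=\infty$ for all $\lambda$). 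Second, the modular $\rho_{\varphi,\omega}$ is convex, which is precisely the property making the Luxemburg functional a norm; I will invoke it for the sharp lower bound.

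\textbf{Lower bound.} I would fix $f_{e}\in E_{\varphi,\omega}$, put $\alpha=\|f-f_{e}\|$, and take any $\lambda>\alpha$, so that $\rho_{\varphi,\omega}((f-f_{e})/\lambda)\le 1$. For an arbitrary $\mu>0$ we have $\rho_{\varphi,\omega}(f_{e}/\mu)<\infty$ because $f_{e}\in E_{\varphi,\omega}$. Writing $f/(\lambda+\mu)$ as the convex combination $\frac{\lambda}{\lambda+\mu}\cdot\frac{f-f_{e}}{\lambda}+\frac{\mu}{\lambda+\mu}\cdot\frac{f_{e}}{\mu}$ and using convexity of the modular gives $\rho_{\varphi,\omega}(f/(\lambda+\mu))<\infty$, hence $\theta(f)\le\lambda+\mu$. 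Letting $\mu\to0^{+}$ and then $\lambda\to\alpha^{+}$ yields $\theta(f)\le\|f-f_{e}\|$, and taking the infimum over $f_{e}$ gives $\theta(f)\le d(f)$. Since $\|\cdot\|\le\|\cdot\|_{\varphi,\omega}^{o}$, the same bound gives $\theta(f)\le d(f)\le d^{o}(f)$.

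\textbf{Upper bound.} For each positive integer $n$ set $f_{e}^{(n)}=f\,\mathbf{1}_{\{1/n\le|f|\le n\}}$. Each $f_{e}^{(n)}$ is bounded by $n$ and supported on $\{|f|\ge1/n\}$, a set of finite measure (because $f^{*}(\infty)=0$), so such bounded, finite-support functions lie in $E_{\varphi,\omega}$. The key is to estimate the remainder $r_{n}=f-f_{e}^{(n)}=f\,\mathbf{1}_{\{|f|>n\}}+f\,\mathbf{1}_{\{|f|<1/n\}}$. Its two pieces have disjoint supports and \emph{separated} value ranges (the first exceeds $n$, the second is below $1/n$), so the decreasing rearrangement of $r_{n}$ is the concatenation of the top part of $f^{*}$ with a shift of its tail: with $c_{n}=\mu\{|f|>n\}$ and $t_{n}=\mu\{|f|\ge1/n\}$ one has $r_{n}^{*}(s)=f^{*}(s)$ for $s<c_{n}$ and $r_{n}^{*}(s)=f^{*}(t_{n}+s-c_{n})$ for $s\ge c_{n}$. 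Consequently, for $\lambda>\theta(f)$ (so that $\rho_{\varphi,\omega}(f/\lambda)<\infty$),
\[
\rho_{\varphi,\omega}(r_{n}/\lambda)=\int_{0}^{c_{n}}\varphi\!\left(\tfrac{f^{*}(s)}{\lambda}\right)\omega(s)\,ds+\int_{0}^{\infty}\varphi\!\left(\tfrac{f^{*}(t_{n}+\tau)}{\lambda}\right)\omega(\tau+c_{n})\,d\tau .
\]
The first integral tends to $0$ by absolute continuity of the integral, since $c_{n}\downarrow0$ and $\int_{0}^{\infty}\varphi(f^{*}/\lambda)\omega<\infty$. In the second integral $\omega(\tau+c_{n})\le\omega(\tau)$, the integrand is dominated by $\varphi(f^{*}(\tau)/\lambda)\omega(\tau)$ and tends to $0$ pointwise because $f^{*}(t_{n}+\tau)\to0$ (using $f^{*}(\infty)=0$); dominated convergence makes it vanish as well. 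The separated value ranges are what let me concatenate the rearrangements without the factor $2$ produced by the generic inequality $(g+h)^{*}(2s)\le g^{*}(s)+h^{*}(s)$, and this is exactly what keeps the estimate sharp. Hence $\rho_{\varphi,\omega}(r_{n}/\lambda)\to0$, so $\|r_{n}\|\le\lambda$ for large $n$ and $d(f)\le\lambda$; letting $\lambda\downarrow\theta(f)$ gives $d(f)\le\theta(f)$.

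\textbf{Orlicz norm and conclusion.} For $d^{o}$ I would reuse the same $r_{n}$ via the Amemiya expression $\|r_{n}\|_{\varphi,\omega}^{o}=\inf_{k>0}\frac{1}{k}(1+\rho_{\varphi,\omega}(kr_{n}))$: fixing $k\in(0,1/\theta(f))$ the modular $\rho_{\varphi,\omega}(kr_{n})\to0$ exactly as above, so $\limsup_{n}\|r_{n}\|_{\varphi,\omega}^{o}\le1/k$, and letting $k\uparrow1/\theta(f)$ gives $d^{o}(f)\le\theta(f)$. Combined with the lower bound this yields $d(f)=d^{o}(f)=\theta(f)$. I expect the main obstacle to be the bookkeeping for the remainder's rearrangement in the upper bound — securing the sharp value $\theta(f)$ rather than $2\theta(f)$ — which the disjoint, value-separated structure of $r_{n}$ resolves; invoking convexity of $\rho_{\varphi,\omega}$ for the sharp lower bound is the remaining ingredient.
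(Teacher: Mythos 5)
Your proof is correct and follows essentially the same route as the sources the paper cites for this lemma (the paper itself gives no proof, referring to Chen, Theorems 1.43--1.44, and Wang, Theorem 2.21): a convexity estimate on the modular yielding $\theta(f)\le d(f)\le d^{o}(f)$, and truncations $f\chi_{\{1/n\le|f|\le n\}}\in E_{\varphi,\omega}$ whose remainder has modular tending to $0$ for every $\lambda>\theta(f)$ — your value-separated concatenation formula for $r_n^{*}$ being the standard way to keep the bound sharp — combined with the Amemiya expression for the Orlicz-norm half. One cosmetic point: when $\mu(\mathrm{supp}\,f)<\infty$ the pointwise convergence $f^{*}(t_n+\tau)\to 0$ follows from $f^{*}\equiv 0$ on $[\mu(\mathrm{supp}\,f),\infty)$ together with $t_n\uparrow\mu(\mathrm{supp}\,f)$, rather than from $f^{*}(\infty)=0$ as you state, but the dominated-convergence step goes through unchanged.
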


\begin{lemma}\cite[Chapter 2, Proposition 1.7]{Bennett1953}
\label{converge}
For $f\in L_{0}$ and $f_{n}\in L_{0}$,
we have
\begin{equation*}
|f|\leq \liminf_{n\rightarrow\infty} |f_{n}|\;\mu-a.e. \Rightarrow f^{*}\leq \liminf_{n\rightarrow} f_{n}^{*};
\end{equation*}
In particular,
$$
|f_{n}|\uparrow |f|\: \mu-a.e. \Rightarrow f^{*}\leq \liminf_{n\rightarrow\infty}f_{n}^{*}
$$
\end{lemma}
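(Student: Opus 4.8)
The plan is to reduce the rearrangement inequality to the corresponding Fatou-type inequality for distribution functions and then transfer it back through the definition of $f^{*}$. Throughout I work with the distribution function $\mu_{g}(\lambda)=\mu(\{s:|g(s)|>\lambda\})$ and with the formula $g^{*}(t)=\inf\{\lambda>0:\mu_{g}(\lambda)\leq t\}$ recalled in the introduction; the only structural facts I will use are that $\mu_{g}$ is non-increasing in $\lambda$ and that $g^{*}$ is determined by $\mu_{g}$ through this infimum.

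First I would prove the Fatou property at the level of distribution functions, namely that for every $\lambda>0$
\begin{equation*}
\mu_{f}(\lambda)\leq\liminf_{n\rightarrow\infty}\mu_{f_{n}}(\lambda).
\end{equation*}
Fix $\lambda>0$ and a point $s$ with $|f(s)|>\lambda$. Since $|f(s)|\leq\liminf_{n}|f_{n}(s)|$ we get $\lambda<\liminf_{n}|f_{n}(s)|$, which forces $|f_{n}(s)|>\lambda$ for all sufficiently large $n$; that is, $s\in\liminf_{n}\{|f_{n}|>\lambda\}$. Hence, up to a $\mu$-null set, $\{|f|>\lambda\}\subseteq\liminf_{n}\{|f_{n}|>\lambda\}$, and applying Fatou's lemma to the indicator functions of these sets yields $\mu_{f}(\lambda)\leq\mu(\liminf_{n}\{|f_{n}|>\lambda\})\leq\liminf_{n}\mu_{f_{n}}(\lambda)$.

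Next I would transfer this to the rearrangements. Fix $t>0$ and put $\lambda_{0}=\liminf_{n}f_{n}^{*}(t)$; if $\lambda_{0}=\infty$ there is nothing to prove, so assume $\lambda_{0}<\infty$ and fix $\varepsilon>0$. Passing to a subsequence along which $f_{n_{k}}^{*}(t)\rightarrow\lambda_{0}$, we have $f_{n_{k}}^{*}(t)<\lambda_{0}+\varepsilon$ for all large $k$; by the defining infimum for $f_{n_{k}}^{*}(t)$ together with the monotonicity of $\mu_{f_{n_{k}}}$ this gives $\mu_{f_{n_{k}}}(\lambda_{0}+\varepsilon)\leq t$ for all large $k$. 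Combining with the first step,
\begin{equation*}
\mu_{f}(\lambda_{0}+\varepsilon)\leq\liminf_{n}\mu_{f_{n}}(\lambda_{0}+\varepsilon)\leq\liminf_{k}\mu_{f_{n_{k}}}(\lambda_{0}+\varepsilon)\leq t,
\end{equation*}
so $\lambda_{0}+\varepsilon$ is admissible in the infimum defining $f^{*}(t)$ and hence $f^{*}(t)\leq\lambda_{0}+\varepsilon$. Letting $\varepsilon\downarrow0$ gives $f^{*}(t)\leq\lambda_{0}=\liminf_{n}f_{n}^{*}(t)$. The ``in particular'' case is then immediate, since $|f_{n}|\uparrow|f|$ $\mu$-a.e. forces $|f|=\liminf_{n}|f_{n}|$, so the hypothesis of the displayed implication holds.

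I expect the transfer step to be the main obstacle, since it is where the quantifiers interact: one must convert the strict inequality $f_{n_{k}}^{*}(t)<\lambda_{0}+\varepsilon$ into the measure bound $\mu_{f_{n_{k}}}(\lambda_{0}+\varepsilon)\leq t$ using monotonicity of the distribution function, and choose the subsequence so that this bound holds eventually; once that is in place, the set-theoretic Fatou lemma established in the first step closes the argument.
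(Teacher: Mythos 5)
Your proof is correct and coincides with the standard argument: the paper itself states this lemma without proof, citing \cite[Chapter 2, Proposition 1.7]{Bennett1953}, and the proof given there is exactly your two-step scheme --- first the Fatou inequality $\mu_{f}(\lambda)\leq\liminf_{n\rightarrow\infty}\mu_{f_{n}}(\lambda)$ for distribution functions via the inclusion $\{|f|>\lambda\}\subseteq\liminf_{n}\{|f_{n}|>\lambda\}$ (up to a null set) and Fatou's lemma for indicators, then the transfer to rearrangements through the infimum defining $f^{*}$. Your handling of the delicate transfer step, converting $f_{n_{k}}^{*}(t)<\lambda_{0}+\varepsilon$ into $\mu_{f_{n_{k}}}(\lambda_{0}+\varepsilon)\leq t$ by monotonicity of the distribution function along a suitably chosen subsequence, is exactly right, so nothing further is needed.
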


\section{Main Results}\label{MainResult}
\subsection{Extreme points in Orlicz-Lorentz spaces}\label{SecExtremepoint}

\begin{theorem}
Let $\varphi$ be an Orlicz function and $\omega$ be a decreasing weight. $x\in S(\Lambda_{\varphi, \omega})$ and $|x|=x^{*}(\sigma)$. Then $x$ is an extreme point of $B(\Lambda_{\varphi, \omega})$ if and only if\\
(1) $\rho_{\varphi, \omega}(x)=1$\\
(2) $x=\alpha \chi_{A}+s$ where $s(t)\in S$, $\alpha\in S^{\prime}$ and $\mu(\sigma(A)\cap L(\omega))=0$.\\
\end{theorem}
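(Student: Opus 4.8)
The plan is to work throughout with the modular $\rho_{\varphi,\omega}$, using that $\|u\|_{\varphi,\omega}\le 1$ iff $\rho_{\varphi,\omega}(u)\le 1$ and that $\rho_{\varphi,\omega}$ is convex (a standard fact here, obtained from the sublinearity of $u\mapsto\int_0^t u^*$ together with a Hardy--Littlewood--P\'olya inequality for the decreasing weight $\omega$). A second device I would use constantly is the reduction afforded by the measure-preserving $\sigma$ with $|x|=x^*\circ\sigma$: since $\sigma$ is measure preserving, $\rho_{\varphi,\omega}(x)=\int_{R^+}\varphi(|x(s)|)\,\omega(\sigma(s))\,ds$, so on the support of $x$ the Lorentz modular becomes a weighted Orlicz modular with position weight $\omega\circ\sigma$. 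On the set $A$ where $|x|$ equals the single affine value this linearises $\varphi$, and the constancy of $\omega$ on $\sigma(A)$ is precisely the constancy of this position weight.

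For necessity I would argue by contraposition, exhibiting for each failure a nontrivial splitting $2x=y+z$ in the ball. First, if $\rho_{\varphi,\omega}(x)<1$ then $\|x\|=1$ forces $\theta(x)=1$ (were $\rho_{\varphi,\omega}(x/\lambda)<\infty$ for some $\lambda<1$, dominated convergence on the region where $x^*$ is finite would give $\|x\|<1$); invoking the existence of two distinct singular functionals attaining $\theta(x)$ at $x$, one peels off the singular part to produce $y\ne z$, so (1) is necessary. Assuming $\rho_{\varphi,\omega}(x)=1$, suppose $|x|$ takes two distinct values $\alpha_1\ne\alpha_2$ in $S^{\prime}$, on plateaux $I_1,I_2$ of $x^*$. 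Shifting the two plateaux by opposite amounts and choosing the shifts $\delta_1,\delta_2$ so that $p_1\delta_1 W(I_1)=p_2\delta_2 W(I_2)$ (with $p_1,p_2$ the slopes of $\varphi$ on the respective affine intervals) keeps both modulars equal to $1$, since $\varphi$ is affine there; this yields $y\ne z$ in the ball, so $|x|$ takes at most one value in $S^{\prime}$. Finally, if that single value $\alpha$ lives on $A$ with $\mu(\sigma(A)\cap L(\omega))>0$, then $\omega$ is constant on a sub-plateau of the $\alpha$-plateau; exchanging the top and bottom halves of that plateau by $\pm\delta$ again leaves $\rho_{\varphi,\omega}$ unchanged, contradicting extremality, and forcing $\mu(\sigma(A)\cap L(\omega))=0$.

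For sufficiency I would take $2x=y+z$ with $\rho_{\varphi,\omega}(y),\rho_{\varphi,\omega}(z)\le 1$. Convexity of $\rho_{\varphi,\omega}$ and $\rho_{\varphi,\omega}(x)=1$ force $\rho_{\varphi,\omega}(y)=\rho_{\varphi,\omega}(z)=1$ together with equality in the convexity estimate; the latter splits into equality in the rearrangement step $2\int_0^t x^*\le\int_0^t y^*+\int_0^t z^*$ and pointwise equality in the convexity of $\varphi$. On the part where $|x|\in S$, strict convexity of $\varphi$ upgrades the pointwise equality to $|y|=|z|=|x|$, and $2x=y+z$ then gives $y=z$ there. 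On $A$, where $|x|=\alpha\in S^{\prime}$ and $\varphi$ is only affine, the pointwise step gives nothing; instead I would use that $\mu(\sigma(A)\cap L(\omega))=0$ makes $\omega$ strictly decreasing on $\sigma(A)$, so that equality in the weighted rearrangement inequality (a Chebyshev-type rigidity: $\int_{\sigma(A)} h^{*}\,\omega=0$ with $\omega$ strictly decreasing forces $h=0$) rules out any nontrivial redistribution of values, again yielding $|y|=|z|=|x|$ and hence $y=z$ on $A$.

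The main obstacle is exactly the affine part $A$ in the sufficiency direction. There $\varphi$ supplies no strict convexity, so \emph{all} of the rigidity must be extracted from the strict monotonicity of $\omega$ on $\sigma(A)$; making the equality analysis of the weighted rearrangement inequality precise---and verifying that $\mu(\sigma(A)\cap L(\omega))=0$ is the exactly correct hypothesis, in particular correctly handling plateaux of $x^*$ that only partially overlap a constant interval of $\omega$---is the delicate point, and is where removing the assumption that $\omega$ be strictly decreasing (the novelty over \cite{kaminska1990extreme}) really bites. Secondary technical overhead comes from the generality that $\varphi$ need not be an $N$-function (jumps of $p$, nonzero $\theta(x)$, singular functionals) and from tracking signs when upgrading $|y|=|z|$ to $y=z$.
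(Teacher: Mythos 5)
Your overall strategy coincides with the paper's on the two substantive components of this theorem. For the necessity of $\mu(\sigma(A)\cap L(\omega))=0$ you perturb the $\alpha$-plateau by opposite amounts on two halves of the part of $A$ sitting over a constant interval of $\omega$; this is exactly the paper's construction $x_{1,2}=s+(\alpha\pm\varepsilon)\chi_{A_{1}}+(\alpha\mp\varepsilon)\chi_{A_{2}}$, where affineness of $\varphi$ near $\alpha$ together with constancy of $\omega$ on $\sigma(A_{0})$ keeps $\rho_{\varphi,\omega}(\lambda x_{i})=\rho_{\varphi,\omega}(\lambda x)$. For sufficiency, both you and the paper force $\rho_{\varphi,\omega}(y)=\rho_{\varphi,\omega}(z)=1$ with equality throughout the convexity chain, settle the strictly convex part by pointwise equality in $\varphi\bigl(\frac{u+v}{2}\bigr)=\frac{1}{2}\bigl(\varphi(u)+\varphi(v)\bigr)$, and extract all remaining rigidity on $A$ from the strict decrease of $\omega$ on $\sigma(A)$.

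There is, however, one genuine gap: your argument for the necessity of (1). From $\rho_{\varphi,\omega}(x)<1$ and $\|x\|_{\varphi,\omega}=1$ you correctly deduce $\theta(x)=1$, but the next step --- invoking two distinct singular functionals attaining $\theta(x)$ at $x$ and ``peeling off the singular part to produce $y\neq z$'' --- is a non sequitur. The existence of two distinct norm-attaining functionals at $x$ shows that $x$ is not a \emph{smooth} point of the ball; it carries no information about extremality, and there is no decomposition of the element $x$ dual to the decomposition $f=L_{v}+s$ of functionals that one could ``peel''. A correct proof must directly construct $y\neq z$ in $B(\Lambda_{\varphi,\omega})$ with $2x=y+z$, using the slack $1-\rho_{\varphi,\omega}(x)>0$ to perturb $x$ (this is how the classical argument runs); the paper itself does not reprove this but cites \cite[Theorem 7]{kaminska1990extreme}, as it also does for the claim that $|x|$ takes at most one value outside $S$.

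A secondary imprecision: your ``Chebyshev-type rigidity'' on $A$ is misstated --- ``$\int_{\sigma(A)}h^{*}\omega=0$ with $\omega$ strictly decreasing forces $h=0$'' is trivially true for any positive weight and is not the lemma you need. What the paper actually uses is the equality case of $\int\varphi(y(\sigma^{-1}(t)))\omega(t)dt\leq\int\varphi(y^{*}(t))\omega(t)dt$: if $y\circ\sigma^{-1}$ were not non-increasing on $\sigma(A)$, a measure-preserving rearrangement $\sigma_{1}$ within $\sigma(A)$ would strictly increase the left-hand side (this is precisely where $\mu(\sigma(A)\cap L(\omega))=0$, hence strict decrease of $\omega$ there, is consumed), contradicting the equality already in force; then $y\circ\sigma^{-1}$ and $z\circ\sigma^{-1}$ are both non-increasing with constant sum $2\alpha$, hence both constant, and the normalization $\rho_{\varphi,\omega}(y)=1$ pins both constants to $\alpha$. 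Your sketch jumps from ``rigidity'' straight to $|y|=|z|=|x|$ and omits the non-increasing-plus-constant-sum step, but the mechanism you name is the paper's; filling in that intermediate step would bring your sufficiency argument into full agreement with the published proof.
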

\begin{proof}
\textsl{Necessity}. The necessity of (1) is presented in \cite[Theorem 7]{kaminska1990extreme}. 
We will consider the necessary of $(2)$ on $R^{+}$. It has been proven in \cite{kaminska1990extreme} that $x(t)$ has at most one values not belonging to $S$. Therefore we only need to show the necessity of $\mu(\sigma(A)\cap L(\omega))=0$. If $\mu(\sigma(A)\cap L(\omega))\neq 0$, let $A_{0}$ satisfying $\sigma(A_{0})=\sigma(A)\cap L(\omega)$ and $\mu A_{0}>0$. Divide $A_{0}$ into $A_{1}$ and $A_{2}$ such that $\mu A_{1}=\mu A_{2}$. For arbitrary $\varepsilon$ such that $(\alpha-\varepsilon, \alpha+\varepsilon)\subset S^{\prime}$(In \cite[Theorem 7]{kaminska1990extreme}, both Case \uppercase\expandafter{\romannumeral1} and Case \uppercase\expandafter{\romannumeral2} in the proof of the necessity can ensure the existence of such $\varepsilon$). Define
\begin{align*}
x_{1}&=s+ (\alpha+\varepsilon) \chi_{A_{1}}+(\alpha-\varepsilon)\chi_{A_{2}}.\\
x_{2}&=s+ (\alpha-\varepsilon) \chi_{A_{1}}+(\alpha+\varepsilon)\chi_{A_{2}}.
\end{align*}
Obviously $2x=x_{1}+x_{2}$.
Let $\lambda_{1}<1<\lambda_{2}$ such that $\lambda_{2}(\alpha+\varepsilon)\in S^{\prime}$ and $\lambda_{1}(\alpha-\varepsilon)\in S^{\prime}$, for $\lambda\in (\lambda_{1}, \lambda_{2})$,
Then $\rho_{\varphi, \omega}(\lambda x_{1})=\rho_{\varphi, \omega}(\lambda x_{2})=\rho_{\varphi, \omega}(\lambda x)$ and $\|x_{1}\|_{\varphi, \omega}=\|x_{2}\|_{\varphi, \omega}=\|x\|_{\varphi, \omega}$, a contradiction.\\
\textsl{Sufficiency}.
The case that $\mu (L(\omega))=0$ is considered in \cite{kaminska1990extreme}.
Assume $\mu (L(\omega))\neq 0$ and $\mu(\sigma(A)\cap L(\omega))= 0$. For $x=s+\alpha \chi_{A}$, we can assume 
$x=\frac{y+z}{2}$ and $\rho_{\varphi, \omega}(y)=\rho_{\varphi, \omega}(z)=1$. We have
\begin{align*}
1
&=   \int_{0}^{\infty}\varphi(x(\sigma^{-1}(t)))\omega(t)dt\\
&=   \int_{0}^{\infty}\varphi(\frac{y(\sigma^{-1}(t))+z(\sigma^{-1}(t))}{2})\omega(t)dt\\
&\leq\frac{1}{2}\int_{0}^{\infty}(\varphi(y(\sigma^{-1}(t)))+\varphi(z(\sigma^{-1}(t))))\omega(t)dt\\
&\leq\frac{1}{2}\int_{0}^{\infty}(\varphi(y^{*}(t))+\varphi(z^{*}(t)))\omega(t)dt\\
&=\frac{1}{2}(\rho_{\varphi, \omega}(y)+\rho_{\varphi, \omega}(z))\\
&=1.
\end{align*}
Thus
\begin{align*}
\int_{R^{+}}\varphi(y(\sigma^{-1}(t)))\omega(t)dt&=\rho_{\varphi, \omega}(y)=1,\\ \int_{R^{+}}\varphi(z(\sigma^{-1}(t)))\omega(t)dt&=\rho_{\varphi, \omega}(z)=1.
\end{align*}
Since
\begin{align*}
0=\|y\|_{\varphi, \omega}+\|z\|_{\varphi, \omega}-2\|x\|_{\varphi, \omega}
&=    \int_{R^{+}}\left(\varphi(y(\sigma^{-1}(t)))+\varphi(z(\sigma^{-1}(t)))\right.\\
&\qquad\quad-\left.2\varphi(\frac{y(\sigma^{-1}(t))+z(\sigma^{-1}(t))}{2})\right)\omega(t)dt\\
&\geq\int_{\sigma(supp~s)}\left(\varphi(y(\sigma^{-1}(t)))+\varphi(z(\sigma^{-1}(t)))\right.\\
&\qquad\quad\left.-2\varphi(\frac{y(\sigma^{-1}(t))+z(\sigma^{-1}(t))}{2})\right)\omega(t)dt\\
&\geq 0.
\end{align*}
Thus
\begin{equation}
x(\sigma^{-1}(t))=y(\sigma^{-1}(t))=z(\sigma^{-1}(t))~on ~\sigma(supp~s).
\end{equation}
and
\begin{align*}
  \int_{R^{+}\backslash\sigma(supp~s)}\varphi(x(\sigma^{-1}(t)))\omega(t)dt
&=\int_{R^{+}\backslash\sigma(supp~s)}\varphi(y(\sigma^{-1}(t)))\omega(t)dt\\
&=\int_{R^{+}\backslash\sigma(supp~s)}\varphi(z(\sigma^{-1}(t)))\omega(t)dt.
\end{align*}
Then we will prove $x(t)=y(t)=z(t)$ on A.
First, we will prove $y(\sigma^{-1}(t))$ is non-increasing on $\sigma(A)$.
If not, since $\mu(\sigma(A)\cap L(\omega))=0$, then $\omega$ is strictly decreasing on $\sigma(A)$, there exists a measure preserving transformation $\sigma_{1}:R^{+}\rightarrow R^{+}$ such that $y(\sigma^{-1}(\sigma_{1}^{-1}(t)))$ is decreasing on $\sigma(A)$ and $y(\sigma^{-1}(\sigma_{1}^{-1}(t)))=y(\sigma^{-1}(t))$ on $R^{+}\backslash \sigma(supp~s)$. Thus
\begin{equation*}
\int_{R^{+}}\varphi(\sigma^{-1}(\sigma_{1}^{-1}(t)))\omega(t)dt>\int_{R^{+}}\varphi(y(\sigma^{-1}(t)))\omega(t)dt=\int_{R^{+}}\varphi(y^{*}(t))\omega(t)dt.
\end{equation*}
a contradiction. Thus $y(\sigma^{-1}(t))$ is decreasing on $\sigma(A)$. Using the same way, we can prove $z(\sigma^{-1}(t))$ is decreasing on $\sigma(A)$.
Besides,
\begin{equation*}
\alpha =x(\sigma^{-1}(t))=\frac{1}{2}y(\sigma^{-1}(t))+\frac{1}{2}z(\sigma^{-1}(t)).
\end{equation*}
Then we can deduce $y(\sigma^{-1}(t))$ and $z(\sigma^{-1}(t))$ are constant on $\sigma(A)$. If $y(\sigma^{-1}(t))\neq z(\sigma^{-1}(t))$ on $\sigma(A)$, assume $y(\sigma^{-1}(t))> z(\sigma^{-1}(t))$ without loss of generality. Then it is easy to obtain
\begin{equation*}
\int_{R^{+}}\varphi(y(\sigma^{-1}(t)))\omega(t)dt>\int_{R^{+}}\varphi(x(\sigma^{-1}(t)))\omega(t)dt=1.
\end{equation*}
A contradiction. Thus
\begin{equation*}
x(\sigma^{-1}(t))=y(\sigma^{-1}(t))=z(\sigma^{-1}(t))~on~\sigma(A).
\end{equation*}
It also implies $y(\sigma^{-1}(t))=z(\sigma^{-1}(t))=x(\sigma^{-1}(t))=0$ on $R^{+}\backslash \sigma(supp~s)\backslash\sigma(A)$.
We finally have
\begin{equation*}
x(t)=y(t)=z(t)~on~R^{+}.
\end{equation*}
It implies that $x$ is an extreme point of $B(\Lambda_{\varphi, \omega})$.
\end{proof}

\begin{theorem}\cite[Theroem 1, Remark 2]{WangSEP2023}
Let $\varphi$ be an Orlicz function and $\omega$ be a decreasing weight. Let $x\in\Lambda_{\varphi, \omega}^{o}$ and $|x|=x^{*}(\sigma)$.\\
(1)If $\lim_{t\rightarrow\infty}\frac{\varphi(t)}{t}=\infty$ or $\lim_{u\rightarrow\infty}\frac{\varphi(t)}{t}=B<\infty$ and $\psi(B)\int_{0}^{\mu(supp~x)}\omega(t)dt>1$, then $x$ is an extreme point of $B(\Lambda_{\varphi, \omega}^{o})$ if and only if $kx(t)=s(t)\in S$ for $t\in R^{+}$ or $kx(t)=\alpha\chi_{A}$ where $\alpha\in S^{\prime}$ and $\mu(\sigma(A)\cap L(\omega))=0$.\\
(2)If $\lim_{t\rightarrow\infty}\frac{\varphi(t)}{t}=B<\infty$ and $\psi(B)\int_{0}^{\mu(supp~x)}\omega(t)dt\leq 1$. Then $x$ is an extreme point of $B(\Lambda_{\varphi,\omega}^{o})$ if and only if $x=\alpha\chi_{A}$ where $\mu(\sigma(A)\cap L(\omega))=0$.
\end{theorem}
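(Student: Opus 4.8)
The plan is to deduce the characterisation from the Luxemburg--norm result (the first theorem of this subsection) by transporting it along the optimal multiplier $k\in K(x)$ supplied by the Amemiya representation $\|f\|^{o}=\inf_{k>0}\frac1k(1+\rho_{\varphi,\omega}(kf))$. The organising remark is that the hypotheses of (1) and (2) are precisely the alternative $K(x)\neq\emptyset$ versus $K(x)=\emptyset$. Since the right derivative satisfies $p(u)\uparrow B$ as $u\to\infty$ and $\psi(p(0))=0$, monotone convergence gives
\[
\lim_{k\to\infty}\rho_{\psi,\omega}(p(kx))=\psi(B)\int_{0}^{\mu(\mathrm{supp}\,x)}\omega(t)\,dt,
\]
read as $+\infty$ when $B=\infty$; hence $k^{**}(x)<\infty$, i.e. the infimum in the Amemiya formula is attained at a finite $k$, exactly under the hypotheses of (1), whereas under (2) the optimal multiplier is $+\infty$. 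This dichotomy produces the two different conclusions.

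For part (1) I fix $k\in K(x)$, so that $\|x\|^{o}=\frac1k(1+\rho_{\varphi,\omega}(kx))$ by the cited Foralewski lemma and $g:=p(kx)$ realises the supremum in $\|x\|^{o}=\sup_{\rho_{\psi,\omega}(h)\le1}\int x^{*}h^{*}\omega$. Using $x^{*}=x\circ\sigma^{-1}$ (assuming $x\ge0$, the ball being solid) I align $g$ with $x$ into a \emph{linear} functional $L_{\phi}$ on $\Lambda_{\varphi,\omega}^{o}$ via $\phi=(g^{*}\omega)\circ\sigma\cdot\mathrm{sgn}\,x$, so that $L_{\phi}(x)=\int x^{*}g^{*}\omega=\|x\|^{o}=1$ with $\|L_{\phi}\|=1$. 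From a decomposition $2x=y+z$ with $y,z\in B(\Lambda_{\varphi,\omega}^{o})$ linearity gives $2=L_{\phi}(y)+L_{\phi}(z)\le\|y\|^{o}+\|z\|^{o}\le2$, forcing $L_{\phi}(y)=L_{\phi}(z)=1$. Unwinding each equality through the chain $L_{\phi}(y)\le\int y^{*}g^{*}\omega\le\|y\|^{o}$ (Hardy--Littlewood, then the defining supremum) together with Young's inequality yields two facts: $y,z$ are comonotone with $x$ through $\sigma$ with matching signs, so $2x^{*}=y^{*}+z^{*}$, and the Young equality $p(kx^{*})\in[p_{-}(ky^{*}),p(ky^{*})]$ holds a.e., likewise for $z$. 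At this point the analysis is identical to the sufficiency part of the preceding theorem, now applied to $kx$: where $kx^{*}\in S$ strict convexity forces $y^{*}=z^{*}=x^{*}$, while on a constancy value $\alpha\in S'$ the condition $\mu(\sigma(A)\cap L(\omega))=0$ makes $g^{*}\omega$ strictly decreasing there, so comonotonicity and the monotonicity of $y^{*},z^{*}$ again force $y^{*}=z^{*}=x^{*}$; hence $y=z=x$. Conversely, if $kx$ fails this form I construct a non-trivial splitting exactly as in the necessity part of that theorem, perturbing $kx$ within an affine interval of $\varphi$ across a constancy interval of $\omega$ inside $\sigma(A)$; the point to verify is that this leaves $\rho_{\varphi,\omega}(k\cdot)$ and $\rho_{\psi,\omega}(p(k\cdot))$ unchanged, so $k$ stays in $K(y)\cap K(z)$ and both pieces keep Orlicz norm $1$.

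For part (2) the optimal multiplier is infinite, and since $\varphi(u)/u\uparrow B$, monotone convergence in the Amemiya formula gives
\[
\|x\|^{o}=\lim_{k\to\infty}\tfrac1k\bigl(1+\rho_{\varphi,\omega}(kx)\bigr)=B\int_{0}^{\infty}x^{*}(t)\omega(t)\,dt=B\,\|x\|_{\Lambda_{\omega}} .
\]
Thus on the relevant cone the Orlicz norm is a fixed multiple of the Lorentz norm, so the extreme points of $B(\Lambda_{\varphi,\omega}^{o})$ coincide with those of the Lorentz ball. Because this norm is linear in $x^{*}$, no strict--convexity requirement survives, and one checks directly (or quotes the extreme--point description of $\Lambda_{\omega}$) that $x$ is extreme iff it is an indicator $\alpha\chi_{A}$ whose support meets the constancy intervals of $\omega$ in measure zero, i.e. $\mu(\sigma(A)\cap L(\omega))=0$.

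The genuine obstacle, already present in the preceding theorem, is that the splitting $2x=y+z$ does not pass to decreasing rearrangements, since $(y+z)^{*}\neq y^{*}+z^{*}$; this is exactly what the linearisation through $L_{\phi}$ and the comonotonicity extracted from the equality case of Hardy--Littlewood are designed to repair. The subtler difficulty specific to the Orlicz norm is the converse direction, where the perturbation must keep \emph{both} pieces on the unit sphere: this forces the simultaneous control of the modular and of the admissible multiplier $k$, and it is precisely the interaction between the affine intervals of $\varphi$ (values in $S'$) and the constancy intervals $L(\omega)$ that decides whether such a splitting exists. Finally the borderline $\psi(B)\int_{0}^{\mu(\mathrm{supp}\,x)}\omega=1$ separating (1) and (2) must be treated separately, and this is why hypothesis (2) uses the non-strict inequality and drops the requirement $\alpha\in S'$.
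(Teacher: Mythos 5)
You should first note that the paper contains no proof of this statement at all: it is imported verbatim from \cite[Theorem 1, Remark 2]{WangSEP2023}, so the only fair comparison is with the techniques the paper does use nearby (the Luxemburg extreme point theorem and the Orlicz-norm strongly extreme proof). Your architecture --- the dichotomy $K(x)\neq\emptyset$ versus $K(x)=\emptyset$ via monotone convergence of $\rho_{\psi,\omega}(p(kx))\to\psi(B)\int_{0}^{\mu(\mathrm{supp}\,x)}\omega$, then a dual norming functional built from $\sigma$, then Young/Hardy--Littlewood equality feeding into the Luxemburg-case rearrangement rigidity --- is the standard and essentially correct skeleton, and your Amemiya computation $\|x\|^{o}=B\int x^{*}\omega$ in case (2) matches the paper's dual-space analogue (Theorem \ref{Norminkothedual}(4)). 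But there are three concrete gaps. In part (1): (a) the Young equality for a piece $y$ of a decomposition $2x=y+z$ must be run with $k_{y}\in K(y)$, not with the $k$ of $x$ as you wrote, and when $B<\infty$ hypothesis (1) does \emph{not} guarantee $K(y)\neq\emptyset$, since $\mathrm{supp}\,y$ can be strictly smaller than $\mathrm{supp}\,x$ (outside $\mathrm{supp}\,x$ one only knows $y=-z$); the degenerate case $\|y\|^{o}=B\int y^{*}\omega$ needs a separate argument. (b) The claim that $g=p(kx)$ "realises the supremum" is not quite right: one may have $\rho_{\psi,\omega}(p(kx))>1$, and an intermediate $g$ with $p_{-}(kx^{*})\leq g\leq p(kx^{*})$ and $\rho_{\psi,\omega}(g)=1$ must be interpolated. (c) Most seriously, your sufficiency argument treats a \emph{mixed} function ($kx^{*}\in S$ on part of the support, $=\alpha\in S^{\prime}$ on another part) and would certify it extreme, whereas the statement's two forms are exclusive --- unlike the Luxemburg theorem, where $x=\alpha\chi_{A}+s$ is allowed --- so the necessity asserts mixed functions are \emph{not} extreme for the Orlicz norm; correspondingly your converse construction never produces a splitting in the mixed case. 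This internal inconsistency signals a missing Orlicz-norm-specific mechanism (the interaction between the multiplier $k$ and the affine value), not a mere omission of detail.

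In part (2) the reduction is genuinely invalid as stated. The identity $\|y\|^{o}=B\|y\|_{\Lambda_{\omega}}$ holds only on the cone of $y$ with $\psi(B)\int_{0}^{\mu(\mathrm{supp}\,y)}\omega(t)\,dt\leq 1$; in general one only has $\|y\|^{o}\leq B\|y\|_{\Lambda_{\omega}}$, so the scaled Lorentz ball is \emph{contained in} the Orlicz ball. Consequently extremality of $x$ in the larger Orlicz ball implies extremality in the Lorentz ball, but not conversely --- and the converse is exactly the sufficiency you need: a decomposition $2x=y+z$ inside $B(\Lambda_{\varphi,\omega}^{o})$ may use pieces of larger support, for which $K(y)\neq\emptyset$ and $\|y\|^{o}<B\|y\|_{\Lambda_{\omega}}$, so "quoting the extreme-point description of $\Lambda_{\omega}$" proves nothing about them. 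One must instead run the duality argument directly with the norming element $h^{*}=B\chi_{[0,\mu(\mathrm{supp}\,x))}$, use the equality case of Hardy--Littlewood together with the strict decrease of $\omega$ off $L(\omega)$ to force the pieces back onto $\mathrm{supp}\,x$ and into alignment, and treat the borderline $\psi(B)\int_{0}^{\mu(\mathrm{supp}\,x)}\omega=1$, which you flag but never resolve. As written, part (2) is an assertion, not a proof.
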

From the definition of $k^{*}$ and $k^{**}$, if $x\in B(\Lambda_{\varphi, \omega}^{o})$ is an extreme point of $S(\Lambda^{o}_{\varphi, \omega})$, then $K(x)$ is a singleton.
\subsection{Strongly extreme point}
\label{SectionStronglyextremepoint}
\begin{theorem}\label{3}
$x\in S(\Lambda_{\varphi, \omega})$ is a strongly extreme point of $B(\Lambda_{\varphi, \omega})$ if and only if
$\varphi\in\Delta_{2}$ and $x$ is an extreme point of $B(\Lambda_{\varphi, \omega})$.
\end{theorem}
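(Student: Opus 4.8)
The plan is to prove both implications, leaning on the extreme-point characterization of the preceding theorem, on the $\Delta_2$ equivalences of the Kaminska lemma (the one listing conditions (1)--(4)), and on the convexity of the modular $\rho_{\varphi,\omega}$, which I will extract from Hardy--Littlewood--P\'olya submajorization together with $\varphi$ convex increasing and $\omega$ decreasing. That a strongly extreme point is automatically extreme is immediate: given $2x=y+z$ with $y,z\in B(\Lambda_{\varphi,\omega})$, apply the definition to the constant sequences $y_{n}\equiv y$, $z_{n}\equiv z$, for which $d(y_{n},B)=d(z_{n},B)=0$; strong extremity then forces $\|y-z\|_{\varphi,\omega}=0$. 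So the content of the necessity is the $\Delta_2$ condition, and of the sufficiency the upgrade from extreme to strongly extreme.

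For the necessity of $\varphi\in\Delta_2$ I argue by contraposition. If $\varphi\notin\Delta_2$, the negation of condition (2) of the Kaminska lemma supplies $\varepsilon_0>0$ and functions $h_n$ with $\|h_n\|_{\varphi,\omega}\to 1$ but $\rho_{\varphi,\omega}(h_n)\le 1-\varepsilon_0$; equivalently, using points $t_n$ with $\varphi(2t_n)/\varphi(t_n)\to\infty$, one produces bumps of non-vanishing norm but vanishing modular. I localize such a bump $g_n$ on a set where $x$ is small (using $\int_0^\infty\omega=\infty$ to find room in the tail) and set $y_n=x+g_n$, $z_n=x-g_n$, so that $2x=y_n+z_n$. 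The $\Delta_2$-failure makes the modular insensitive to these bumps, so that $\rho_{\varphi,\omega}(y_n),\rho_{\varphi,\omega}(z_n)\to 1$ and hence $\|y_n\|_{\varphi,\omega},\|z_n\|_{\varphi,\omega}\to 1$, giving $d(y_n,B),d(z_n,B)\to 0$; meanwhile $\|y_n-z_n\|_{\varphi,\omega}=2\|g_n\|_{\varphi,\omega}$ stays bounded below, contradicting strong extremity.

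For the sufficiency, assume $\varphi\in\Delta_2$ and $x$ extreme, and suppose $2x=y_n+z_n$ with $d(y_n,B),d(z_n,B)\to 0$. Replacing $y_n,z_n$ by nearby ball elements and normalizing, I may take $\|y_n\|_{\varphi,\omega},\|z_n\|_{\varphi,\omega}\to 1$, and by the $\Delta_2$ equivalences this yields $\rho_{\varphi,\omega}(y_n),\rho_{\varphi,\omega}(z_n)\to 1$. Convexity of the modular then gives $1=\rho_{\varphi,\omega}(x)=\rho_{\varphi,\omega}(\tfrac{y_n+z_n}{2})\le\tfrac12\rho_{\varphi,\omega}(y_n)+\tfrac12\rho_{\varphi,\omega}(z_n)\to 1$, so the convexity bound becomes asymptotic equality. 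Using the measure-preserving transformation $\sigma$ with $|x|=x^{*}(\sigma)$, together with Lemma \ref{converge} and measure-preserving transformations applied to the sequence, I transfer this equality into convergence $y_n-x\to 0$ and $z_n-x\to 0$ in measure. The extreme-point structure $x=\alpha\chi_A+s$ is essential here: on the strictly convex part $s\in S$ the strict convexity of $\varphi$ pins the limit, while on the affine block $\alpha\chi_A$ I invoke $\mu(\sigma(A)\cap L(\omega))=0$ and the resulting strict monotonicity of $\omega$ on $\sigma(A)$ to exclude nontrivial oscillation, exactly as in the sufficiency argument of the extreme-point theorem. Finally $\Delta_2$ upgrades convergence in measure with controlled modulars to $\rho_{\varphi,\omega}(y_n-x)\to 0$, hence to $\|y_n-z_n\|_{\varphi,\omega}\le\|y_n-x\|_{\varphi,\omega}+\|x-z_n\|_{\varphi,\omega}\to 0$.

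The main obstacle is the rearrangement bookkeeping in the sufficiency direction: unlike the extreme-point proof, no single $\sigma$ simultaneously aligns $x$, $y_n$ and $z_n$, so the passage from asymptotic equality in the modular convexity bound to convergence in measure must be run through measure-preserving transformations on the entire sequence, with the strictly convex part and the affine block $\alpha\chi_A$ handled separately. Establishing and exploiting the monotonicity of $\rho_{\varphi,\omega}$ under submajorization and controlling the affine block via $\mu(\sigma(A)\cap L(\omega))=0$ is where the real effort lies; the $\Delta_2$ condition itself enters only at the two ends, to convert norm control into modular control and back.
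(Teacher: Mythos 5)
Your proposal follows essentially the same route as the paper's proof: necessity of $\Delta_{2}$ by contraposition via tall bumps of vanishing modular but non-vanishing norm superposed on $x$ (the paper uses $u_{n}\uparrow\infty$ with $\varphi((1+\tfrac1n)u_{n})>2^{n}\varphi(u_{n})$ on intervals $G_{n}=[0,d_{n}]$ with $W(d_{n})=2^{-n}/\varphi(u_{n})$), and sufficiency by converting $d(y_{n},B),d(z_{n},B)\to 0$ into modular information through $\Delta_{2}$, exploiting asymptotic equality in the convexity of $\rho_{\varphi,\omega}$, and treating the strictly convex part $S$ and the affine block $\alpha\chi_{A}$ (via $\mu(\sigma(A)\cap L(\omega))=0$ and measure-preserving transformations, Lemma~\ref{converge}) separately before closing with $\Delta_{2}$ again. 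The only minor variation is that you conclude via $\rho_{\varphi,\omega}(y_{n}-x)\to 0$ and the triangle inequality, whereas the paper estimates $\rho_{\varphi,\omega}(y_{n}-z_{n})\to 0$ directly over the decomposition $\sigma(\mathrm{supp}\,s)\cup\sigma(A)\cup(\text{rest})$; this is a cosmetic difference, not a different method.
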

\begin{proof}

\textsl{Sufficiency.~}
For arbitrary $\delta>0$,
there exists $M>0$ such that
\begin{equation}\label{14}
\mu\{t\in R^{+}: |y_{n}(t)|>M\}<\frac{\delta}{4},\;\mu\{t\in R^{+}: |z_{n}(t)|>M\}<\frac{\delta}{4}.
\end{equation}
For arbitrary $\varepsilon>0$, let
\begin{equation*}
B=\{(u, v): \: |u|\leq M, |v|\leq M, |u-v|\geq \varepsilon \}.
\end{equation*}
Since $B$ is a closed set and the continuous function
\begin{equation*}
\frac{2\varphi(\frac{1}{2}u+\frac{1}{2}v )}{\varphi(u)+\varphi(v)}<1,
\end{equation*}
then there exists $\delta_{1}>0$ such that
\begin{equation*}
\frac{2\varphi(\frac{1}{2}u+\frac{1}{2}v)}{\varphi(u)+\varphi(v)}\leq 1-\delta_{1}, \;(u, v)\in B.
\end{equation*}
Let
$$
B_{n}=\{t\in S:  |y_{n}(t)|\leq M, |z_{n}(t)|\leq M, |y_{n}(t)-z_{n}(t)|\geq \varepsilon\}.
$$
Then we will prove $\mu B_{n}\rightarrow \infty$ as $n\rightarrow\infty$. If not,
assume there exists $\delta_{2}$ such that $\mu B_{n}>\delta_{2}$. Since
\begin{equation}\label{7}
\begin{aligned}
\|y_{n}\|_{\varphi, \omega}+\|z_{n}\|_{\varphi, \omega}-2\|x\|_{\varphi, \omega}
&=\int_{R^{+}}\varphi(y_{n}^{*}(t))\omega(t)dt+
\int_{R^{+}}\varphi(z_{n}^{*}(t))\omega(t)dt\\
&\qquad-2\int_{R^{+}}\varphi(x^{*}(t))\omega(t)dt\\
&\geq\int_{R^{+}}\varphi(y_{n}(\sigma^{-1}(t)))\omega(t)dt
+\int_{R^{+}}\varphi(z_{n}(\sigma^{-1}(t)))\omega(t)dt\\
&\qquad-2\int_{R^{+}}\varphi\left( \left(\frac{y_{n}+z_{n}}{2}\right)\sigma^{-1}(t)\right)\omega(t)dt\\
&=\int_{R^{+}}\varphi(y_{n}(t))\omega(\sigma(t))dt+
\int_{R^{+}}\varphi(z_{n}(t))\omega(\sigma(t))dt\\
&\qquad-2\int_{R^{+}}\varphi(\frac{y_{n}+z_{n}}{2})\omega(\sigma(t))dt\\
&\geq \int_{B_{n}}\left(
\varphi(y_{n})+\varphi(z_{n})-2\varphi(\frac{y_{n}+z_{n}}{2})\right)
\omega(\sigma(t))dt\\
&\geq 0.
\end{aligned}
\end{equation}
we obtain
\begin{align}\label{89}
\lim_{n\rightarrow\infty}\int_{R^{+}}\varphi(y_{n}(\sigma^{-1}(t)))\omega(t)dt
&=\lim_{n\rightarrow\infty} \rho_{\varphi, \omega}(y_{n})=1.\\
\label{9}\lim_{n\rightarrow\infty}\int_{R^{+}}\varphi(z_{n}(\sigma^{-1}(t)))\omega(t)dt
&=\lim_{n\rightarrow\infty} \rho_{\varphi, \omega}(z_{n})=1.
\end{align}
Besides, from (\ref{7}) we can find $n_{0}\in N$ and $M_{n_{0}}>M>0$ such that
\begin{align}\label{1933}
\int_{B_{n_{0}}}(\varphi(y_{n_{0}})+\varphi(z_{n_{0}})
-2\varphi(\frac{y_{n_{0}}+z_{n_{0}}}{2}))\omega(\sigma(t))dt
\leq\frac{\varepsilon(W(M_{n_{0}}+\delta_{2})-W(M_{n_{0}}))}{2}
\end{align}
and from formula (\ref{89}) and (\ref{9})
\begin{align*}
y_{n_{0}}(\sigma^{-1}(t))<\frac{\varepsilon}{2}, \;t\geq M_{n_{0}},\\
z_{n_{0}}(\sigma^{-1}(t))<\frac{\varepsilon}{2}, \;t\geq M_{n_{0}}.
\end{align*}
Notice that $\sigma(B_{n_{0}})\subset (0, M_{n_{0}})$. If not, there exists $t_{0}\in B_{n_{0}}$ such that $\sigma(t_{0})> M_{n_{0}}$. Then
\begin{align*}
y_{n_{0}}(\sigma^{-1}(\sigma(t_{0})))=y_{n_{0}}(t_{0})
&<\frac{\varepsilon}{2}, \\
z_{n_{0}}(\sigma^{-1}(\sigma(t_{0})))=z_{n_{0}}(t_{0})
&<\frac{\varepsilon}{2}.\\
\end{align*}
and therefore
\begin{equation*}
|y_{n_{0}}(t_{0})-z_{n_{0}}(t_{0})|<\frac{\varepsilon}{2}+\frac{\varepsilon}{2}
=\varepsilon, t_{0}\in B_{n_{0}},
\end{equation*}
a contradiction. Thus for $t\in B_{n_{0}}$
\begin{align*}
\int_{B_{n_{0}}}\left(
\varphi(y_{n_{0}})+\varphi(z_{n_{0}})
-2\varphi\left(\frac{y_{n_{0}}+z_{n_{0}}}{2}\right)
\right)\omega(\sigma(t))dt
&\geq\varepsilon(W(M_{n_{0}}+\delta_{2})-W(M_{n_{0}}))
\end{align*}
A contradiction to formula (\ref{1933}).
Then
$\mu B_{n}\rightarrow\infty$. For the $\delta$ mentioned above, we assume $\mu B_{n}<\frac{\delta}{4}$ without loss of generality. Then from formula (\ref{14}) we obtain
\begin{equation*}
\mu\{t\in supp~s: |y_{n}(t)-z_{n}(t)|\geq\varepsilon \}<\delta.
\end{equation*}
It implies
\begin{align*}
y_{n}                &\stackrel{\mu}\rightarrow x,    ~z_{n}                \stackrel{\mu}\rightarrow x,     on        ~supp~s.\\
y_{n}(\sigma^{-1}(t))&\stackrel{\mu}\rightarrow x^{*},~z_{n}(\sigma^{-1}(t))\stackrel{\mu}\rightarrow~x^{*}, on ~\sigma(supp~s).
\end{align*}
Let $u_{n}=\inf_{m\geq n}y_{m}\chi_{supp~s}$, $v_{n}=\sup_{m\leq n} y_{m}\chi_{supp~s}$, then $u_{n}\uparrow x$ and $v_{n}\downarrow x$ on $supp~s$.
Then from Lemma \ref{converge}
 $$
 u_{n}^{*}\uparrow x^{*},~ v_{n}^{*}\downarrow x^{*} ~on~ \sigma(supp~s)~(n\rightarrow \infty).
 $$
 From Fatou Lemma
\begin{align*}
\int_{\sigma(supp~s)}\varphi(y_{n}^{*})\omega(t)dt
&\geq \lim_{n\rightarrow\infty}\int_{\sigma(supp~s)}\varphi(u_{n}^{*})\omega(t)dt
\rightarrow\int_{\sigma(supp~x)}\varphi(x^{*}(t))\omega(t)dt;\\
\int_{\sigma(supp~s)}\varphi(y_{n}^{*})\omega(t)dt
&\leq \lim_{n\rightarrow\infty}\int_{\sigma(supp~s)}\varphi(v_{n}^{*})\omega(t)dt
\rightarrow\int_{\sigma(supp~s)}\varphi(x^{*}(t))\omega(t)dt.
\end{align*}
as $n\rightarrow\infty$. Thus
\begin{align*}
\lim_{n\rightarrow\infty}\int_{\sigma(supp~s)}\varphi(y_{n}^{*})\omega(t)dt
&=\int_{\sigma(supp~s)}\varphi(x^{*})\omega(t)dt,\\
\lim_{n\rightarrow\infty}\int_{\sigma(supp~s)}\varphi(z_{n}^{*})\omega(t)dt
&=\int_{\sigma(supp~s)}\varphi(x^{*})\omega(t)dt.
\end{align*}
Then we will prove
\begin{equation*}
y_{n}^{*}(t)+z_{n}^{*}(t)\stackrel{\mu}\rightarrow 2x^{*}(t)~on~R^{+}\backslash \sigma(supp~s).
\end{equation*}
If not there exists $\varepsilon_{0}>0$ and $\varepsilon_{1}>0$ such that $\mu\{t\in R^{+}: |y_{n}^{*}(t)+z_{n}^{*}(t)-2x^{*}(t)|\geq \varepsilon_{0}\}>\varepsilon_{1}$.
If $\lim_{n\rightarrow\infty}\mu\{t\in R^{+}: y_{n}^{*}(t)+z_{n}^{*}(t)-2x^{*}(t)\geq \varepsilon_{0}\}\neq 0$. Then there exists $t_{0}\in \sigma(A)$ such that
$$
y_{n}^{*}(t_{0})+z_{n}^{*}(t_{0})> 2x^{*}(t_{0})+\varepsilon_{0}.
$$
Thus there exists $\beta>0$ such that
\begin{align*}
      \int_{R^{+}\backslash \sigma(supp~s)}(\varphi(y_{n}^{*}(t))+\varphi(z_{n}^{*}(t)))\omega(t)dt
&\geq \int_{                \sigma(A)     }(\varphi(y_{n}^{*}(t))+\varphi(z_{n}^{*}(t)))\omega(t)dt\\
&\geq    \int_{\{t\in \sigma(A): t\leq t_{0}\}}(\varphi(y_{n}^{*}(t_{0}))+\varphi(z_{n}^{*}(t_{0})))\omega(t)dt\\
&
\qquad +2\int_{\{t\in \sigma(A): t\geq t_{0}\}}(\varphi(\alpha))\omega(t)dt+\beta\\
&>2\int_{\sigma(A)}\varphi(x_{0})\omega(t)dt+\beta.
\end{align*}
This lead to a contradiction to the fact
\begin{align*}
 \lim_{n\rightarrow\infty}\int_{R^{+}\backslash \sigma(supp~s)}\left( \varphi(y_{n}^{*})+\varphi(z_{n}^{*}) \right)\omega(t)dt
 &=\lim_{n\rightarrow\infty}2-\int_{\sigma(supp~s)}\left( \varphi(y_{n}^{*})+\varphi(z_{n}^{*}) \right)\\
 &= 2-2\int_{\sigma(supp~s)}\varphi(x^{*})\omega(t)dt\\
 &=2\int_{\sigma(A)}\varphi(x^{*})\omega(t)dt.
\end{align*}
Thus, for arbitrary $\varepsilon>0$, $\lim_{n\rightarrow\infty}\mu\{t\in \sigma(A): y_{n}^{*}(t)+z_{n}^{*}(t)-2x^{*}(t)\geq \varepsilon \}=0$.
If $\mu\{t\in \sigma(A): y_{n}^{*}(t)+z_{n}^{*}(t)-2x^{*}(t)\geq \varepsilon_{0} \}\not\rightarrow 0$,
Thus $2\int_{\sigma(A)}\varphi(x^{*}(t))\omega(t)dt<\int_{\sigma(A)}\varphi(y_{n}^{*}(t))\omega(t)dt+\int_{\sigma(A)}\varphi(z_{n}^{*}(t))\omega(t)dt$, and it implies $\rho_{\varphi, \omega}(y_{n})+\rho_{\varphi, \omega}(z_{n})\not\rightarrow 2$, a contradiction.
Then $y_{n}(t)+z_{n}(t)\stackrel{\mu}\rightarrow 2x^{*}(t)$ on $\sigma(A)$.
Therefore
\begin{equation*}
y_{n}^{*}(t)+z_{n}^{*}(t)\stackrel{\mu}\rightarrow 2x^{*}(t)=2\alpha\; on\; \sigma(A).
\end{equation*}
Given the fact that $y_{n}^{*}$ and $z_{n}^{*}$ are decreasing on $\sigma(A)$ we have
$$
y_{n}^{*}(t)\rightarrow\alpha, ~z_{n}^{*}(t)\rightarrow \alpha~a.e. ~on ~\sigma(A).
$$
Since
\begin{align*}
 \lim_{n\rightarrow\infty}\int_{\sigma(A)}\left(\varphi(y_{n}(\sigma^{-1}(t)))+\varphi(z_{n}(\sigma^{-1}(t)))\right)\omega(t)dt
&\geq2\liminf_{n\rightarrow\infty}\varphi(x^{*}(t))\omega(t)dt\\
&=   2\alpha\mu A.
\end{align*}
Thus
\begin{align}
\label{s111}
&\lim_{n\rightarrow\infty}\int_{\sigma(A)}\varphi(y_{n}(\sigma^{-1}(t)))\omega(t)dt=
\lim_{n\rightarrow\infty}\int_{\sigma(A)}\varphi(z_{n}(\sigma^{-1}(t)))\omega(t)dt=
\alpha\mu A.\\
\label{a111}
&\lim_{n\rightarrow\infty}\int_{R^{+}\backslash\sigma(supp~s)\backslash\sigma(A)}\varphi(y_{n}(\sigma^{-1}(t)))\omega(t)dt=
\lim_{n\rightarrow\infty}\int_{R^{+}\backslash\sigma(supp~s)\backslash\sigma(A)}\varphi(z_{n}(\sigma^{-1}(t)))\omega(t)dt=
0
\end{align}
Besides, from the fact that
\begin{align}
\label{0111}
\int_{R^{+}\backslash \sigma(supp~s)\backslash \sigma(A)}\varphi(y_{n}^{*}(t))\omega(t)dt\rightarrow 0, \\
\label{0222}
\int_{R^{+}\backslash \sigma(supp~s)\backslash \sigma(A)}\varphi(z_{n}^{*}(t))\omega(t)dt\rightarrow 0.
\end{align}
Therefore from formula (\ref{s111}, \ref{a111}, \ref{0111}, \ref{0222}) we have
\begin{align*}
\lim_{n\rightarrow\infty}\rho_{\varphi, \omega}(y_{n}-z_{n})
&=\lim_{n\rightarrow\infty}\int_{R^{+}}\varphi((y_{n}-z_{n})^{*}(t))\omega(t)dt\\
&=\lim_{n\rightarrow\infty}\int_{\sigma(supp~s)}\varphi((y_{n}-z_{n})^{*}(t))\omega(t)dt
+\int_{\sigma(A)}\varphi((y_{n}-z_{n})^{*}(t))\omega(t)dt\\
&\qquad+\int_{R^{+}\backslash\sigma(A)\backslash \sigma(supp~s)}\varphi((y_{n}-z_{n})^{*}(t))\omega(t)dt\\
&\leq\lim_{n\rightarrow\infty}\int_{\sigma(supp~s)}\varphi((y_{n}-z_{n})^{*}(t))\omega(t)dt
+\int_{\sigma(A)}\varphi((y_{n}-z_{n})^{*}(t))\omega(t)dt\\
&\qquad+\int_{R^{+}\backslash\sigma(A)\backslash \sigma(supp~s)}\varphi((y_{n})^{*}(t))\omega(t)dt
+\int_{R^{+}\backslash\sigma(A)\backslash \sigma(supp~s)}\varphi((z_{n})^{*}(t))\omega(t)dt\\
&=0.
\end{align*}
Given $\varphi\in\Delta_{2}$, we obtain $\|y_{n}-z_{n}\|_{\varphi, \omega}\rightarrow 0$. Besides, it follows from the proof process that
\begin{equation*}
y_{n}(\sigma^{-1}(t))\stackrel{\mu}\rightarrow y_{n}^{*}(t),\; z_{n}(\sigma^{-1}(t))\stackrel{\mu}\rightarrow z_{n}^{*}(t).
\end{equation*}
\textsl{Necessity.}~
It suffices to prove the necessity of $\varphi\in\Delta_{2}$. If $\varphi\notin\Delta_{2}$, there exists  $\{u_{n}\}_{n\in N}$, $u_{n}\uparrow\infty$ as $n\rightarrow\infty$ and $\varphi((1+\frac{1}{n})u_{n})> 2^{n}\varphi(u_{n})$. Let $G_{n}=[0, d_{n}]$ where $W(d_{n})=\frac{1}{2^{n}\varphi(u_{n})}$, $(n=1,2...)$. Obviously, $\{d_{n}\}_{n\in N}$ is bounded. For $x_{0}\in S(\Lambda_{\varphi, \omega})$ there exists $\gamma>0$ such that $0<|f_{0}(t)|<\gamma$ for $t\in G_{n}$. Let
\begin{align*}
y_{n}(t)=(x_{0}(t)+u_{n})\chi_{G_{n}}+x_{0}(t)\chi_{R^{+}\backslash G_{n}},\\
z_{n}(t)=(x_{0}(t)-u_{n})\chi_{G_{n}}+x_{0}(t)\chi_{R^{+}\backslash G_{n}}.
\end{align*}
Obviously, $y_{n}+z_{n}=2x_{0}$. Since
\begin{align*}
\rho_{\varphi, \omega}(y_{n})\leq \rho_{\varphi, \omega}(x)+\frac{1}{2^{n}},\\
\rho_{\varphi, \omega}(z_{n})\geq \rho_{\varphi, \omega}(x)-\frac{1}{2^{n}}.
\end{align*}
Thus
\begin{align*}
\rho_{\varphi, \omega}((1+\frac{1}{2^{n}})y_{n})&\leq 1,\\
\rho_{\varphi, \omega}((1-\frac{1}{2^{n}})z_{n})&\geq 1.
\end{align*}
Thus
\begin{equation*}
\|y_{n}\|_{\varphi, \omega}\rightarrow 1, \|z_{n}\|_{\varphi, \omega}\rightarrow 1~as~n\rightarrow \infty.
\end{equation*}
Since $\rho_{\varphi, \omega}(y_{n}-z_{n})=2\int_{G_{n}}\varphi(u_{n})\omega(t)dt\leq 1 $ and
\begin{equation*}
\rho_{\varphi, \omega}((1+\frac{1}{n})(y_{n}-z_{n}))\geq 2^{n}\int_{G_{n}}\varphi(u_{n})\omega(t)dt\geq 1.
\end{equation*}
Thus $\|y_{n}-z_{n}\|_{\varphi, \omega}\not\rightarrow 0(n\rightarrow\infty)$.

\end{proof}

\begin{theorem}\cite{WangSEP2023}
Assume $\varphi$ be an Arbitrary Orlicz function and $\omega$ be a decreasing weight. Then $x\in S(\Lambda_{\varphi, \omega}^{o})$ is an strongly extreme point of $B(\Lambda_{\varphi, \omega}^{o})$ if and only if $\varphi\in\Delta_{2}$ and $x$ is an extreme point.
\end{theorem}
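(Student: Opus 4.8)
The plan is to transcribe the proof of Theorem \ref{3} into the Amemiya/Orlicz--norm framework, using the representation $\|f\|_{\varphi,\omega}^{o}=\inf_{k>0}\frac1k\bigl(1+\rho_{\varphi,\omega}(kf)\bigr)$ together with the identity $\|x\|^{o}=\frac1k\bigl(1+\rho_{\varphi,\omega}(kx)\bigr)$, valid exactly for $k\in K(x)$, to turn every statement about the Orlicz norm into one about the modular $\rho_{\varphi,\omega}(k\,\cdot)$. I first record two reductions. A strongly extreme point is automatically extreme: applying the definition to the constant sequences $y_{n}\equiv y$, $z_{n}\equiv z$ forces $y=z$ whenever $2x=y+z$ with $y,z\in B(\Lambda_{\varphi,\omega}^{o})$. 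Hence in the necessity direction only $\varphi\in\Delta_{2}$ must be proved, and in the sufficiency direction I may assume $x$ is extreme; by the remark following the extreme--point theorem in the Orlicz norm, extremality forces $K(x)=\{k_{0}\}$ to be a singleton, so $\|x\|^{o}=\frac1{k_{0}}\bigl(1+\rho_{\varphi,\omega}(k_{0}x)\bigr)=1$ and $k_{0}x$ has the canonical form $\alpha\chi_{A}+s$ with $s\in S$, $\alpha\in S'$ and $\mu(\sigma(A)\cap L(\omega))=0$.

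\textbf{Necessity of $\Delta_{2}$.} I would reuse the construction from the necessity part of Theorem \ref{3}. Assuming $\varphi\notin\Delta_{2}$, choose $u_{n}\uparrow\infty$ with $\varphi((1+\tfrac1n)u_{n})>2^{n}\varphi(u_{n})$ and sets $G_{n}$, located where a fixed $x_{0}\in S(\Lambda_{\varphi,\omega}^{o})$ is small, with $W(d_{n})=1/(2^{n}\varphi(k_{0}u_{n}))$, and set $y_{n}=x_{0}+u_{n}\chi_{G_{n}}$, $z_{n}=x_{0}-u_{n}\chi_{G_{n}}$, so that $2x_{0}=y_{n}+z_{n}$. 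Feeding the modular bound $\rho_{\varphi,\omega}(k_{0}u_{n}\chi_{G_{n}})=\varphi(k_{0}u_{n})W(d_{n})=2^{-n}\to0$ into the Amemiya formula at the scale $k_{0}$ yields $\|y_{n}\|^{o},\|z_{n}\|^{o}\to1$, hence $d^{o}(y_{n},B^{o}),d^{o}(z_{n},B^{o})\to0$; the separation is immediate from $\|y_{n}-z_{n}\|^{o}\ge\|y_{n}-z_{n}\|_{\varphi,\omega}$ and the Luxemburg estimate $\|2u_{n}\chi_{G_{n}}\|_{\varphi,\omega}\not\to0$ already obtained in Theorem \ref{3}. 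Thus $x_{0}$ is not strongly extreme, so $\varphi\in\Delta_{2}$ is necessary.

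\textbf{Sufficiency.} Assume $\varphi\in\Delta_{2}$ and $x$ extreme with $K(x)=\{k_{0}\}$. Given $y_{n},z_{n}$ with $2x=y_{n}+z_{n}$ and $d^{o}(y_{n},B^{o}),d^{o}(z_{n},B^{o})\to0$, normalise so $\|y_{n}\|^{o},\|z_{n}\|^{o}\to1$ and pick optimising parameters $k_{n}\in K(y_{n})$, $k_{n}'\in K(z_{n})$. Granting $k_{n},k_{n}'\to k_{0}$ (see below), the bounds $\rho_{\varphi,\omega}(k_{0}y_{n})\ge k_{0}\|y_{n}\|^{o}-1$ together with the continuity of $t\mapsto\rho_{\varphi,\omega}(ty_{n})$ furnished by $\Delta_{2}$ give $\rho_{\varphi,\omega}(k_{0}y_{n}),\rho_{\varphi,\omega}(k_{0}z_{n})\to\rho_{\varphi,\omega}(k_{0}x)$, so the convexity gap in $\rho_{\varphi,\omega}(k_{0}x)\le\frac12\bigl(\rho_{\varphi,\omega}(k_{0}y_{n})+\rho_{\varphi,\omega}(k_{0}z_{n})\bigr)$ tends to $0$. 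I would then rerun the machinery of Theorem \ref{3} verbatim on the modular $\rho_{\varphi,\omega}(k_{0}\,\cdot)$, whose integrand $\varphi(k_{0}\,\cdot)$ has the same strict--convexity structure: split $R^{+}$ into $\sigma(\mathrm{supp}\,s)$, $\sigma(A)$ and the remainder; use strict convexity of $\varphi$ on $S$ with Fatou's lemma and Lemma \ref{converge} to force $y_{n}(\sigma^{-1}(t)),z_{n}(\sigma^{-1}(t))\to x(\sigma^{-1}(t))$ on $\sigma(\mathrm{supp}\,s)$; use the monotonicity of $y_{n}^{*},z_{n}^{*}$ with the strict decrease of $\omega$ off $L(\omega)$ on $\sigma(A)$; and use $\rho_{\varphi,\omega}(k_{0}\,\cdot)\to0$ on the remainder. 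This yields $\rho_{\varphi,\omega}(k_{0}(y_{n}-z_{n}))\to0$, whence by $\varphi\in\Delta_{2}$ (modular null convergence is equivalent to norm null convergence) $\|y_{n}-z_{n}\|_{\varphi,\omega}\to0$, and therefore $\|y_{n}-z_{n}\|^{o}\le2\|y_{n}-z_{n}\|_{\varphi,\omega}\to0$.

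\textbf{Main obstacle.} The delicate point, absent in the Luxemburg case, is the convergence $k_{n},k_{n}'\to k_{0}$ of the Amemiya parameters; everything above is built on it. I would first use $\varphi\in\Delta_{2}$ and $\|y_{n}\|^{o}\to1$ to confine the $k_{n}$ to a fixed compact subinterval of $(0,\infty)$, the $\Delta_{2}$--condition supplying the coercivity of $k\mapsto\frac1k\bigl(1+\rho_{\varphi,\omega}(ky_{n})\bigr)$ that prevents escape to $0$ or $\infty$. I would then pass to a subsequential limit $\bar k$ and identify $\bar k=k_{0}$ from the singleton property $k^{*}(x)=k^{**}(x)=k_{0}$: since $k_{n}\in K(y_{n})$ is characterised by $\rho_{\psi,\omega}(p(k_{n}y_{n}))$ crossing the value $1$, and the convexity estimate forces $y_{n}\to x$ in measure on $\sigma(\mathrm{supp}\,x)$, the crossing level for $y_{n}$ converges to that for $x$, which is attained only at $k_{0}$. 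Closing this continuity argument for $k\mapsto\rho_{\psi,\omega}(p(kx))$ under merely $\varphi\in\Delta_{2}$ (so that $p$ and the complementary modular remain controlled across the limit) is the step I expect to require the most care.
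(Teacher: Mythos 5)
Your plan founders on a circularity at exactly the point you flag as the main obstacle, and the paper's proof contains the idea you are missing. In your sufficiency argument, the vanishing convexity gap and the convergence $y_{n}\to x$ in measure are derived only \emph{after} granting $k_{n},k_{n}'\to k_{0}$; yet to identify the subsequential limit $\bar k$ with $k_{0}$ you invoke ``the convexity estimate forces $y_{n}\to x$ in measure,'' which is precisely a consequence of the granted convergence. As sketched, the argument assumes what it proves, and you offer no independent identification of $\bar k$. The paper sidesteps the parameter convergence entirely by a harmonic-mean trick: since $\|x\|_{\varphi,\omega}^{o}\le\frac{1}{\kappa}\bigl(1+\rho_{\varphi,\omega}(\kappa x)\bigr)$ for \emph{every} $\kappa>0$, it tests with $\kappa=\frac{2k_{n}h_{n}}{k_{n}+h_{n}}$ and exploits the identity $\frac{h_{n}}{k_{n}+h_{n}}(k_{n}y_{n})+\frac{k_{n}}{k_{n}+h_{n}}(h_{n}z_{n})=\frac{2k_{n}h_{n}}{k_{n}+h_{n}}x$, so that $\|y_{n}\|_{\varphi,\omega}^{o}+\|z_{n}\|_{\varphi,\omega}^{o}-2\|x\|_{\varphi,\omega}^{o}$ dominates a nonnegative \emph{weighted} Jensen gap for the integrands $\varphi(k_{n}y_{n})$, $\varphi(h_{n}z_{n})$; this gap tends to $0$ using only boundedness of $\{k_{n}\},\{h_{n}\}$ (quoted from \cite{WangSEP2023}), with no a priori parameter convergence. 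Rerunning the machinery of Theorem \ref{3} on this weighted gap then yields $k_{n}y_{n}\stackrel{\mu}\to kx$ and $h_{n}z_{n}\stackrel{\mu}\to kx$ directly, and $k_{n}\to k$, $h_{n}\to k$ is read off \emph{afterwards} from $\lim k_{n}=\lim\|k_{n}y_{n}\|=k\|x\|=k$; the parameter convergence is a corollary, not a prerequisite.

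Two further gaps. First, your reduction ``extremality forces $K(x)=\{k_{0}\}$'' fails in the boundary case $\lim_{t\to\infty}\varphi(t)/t=B<\infty$ with $\psi(B)\int_{0}^{\mu(\mathrm{supp}\,x)}\omega(t)\,dt\le 1$: there extreme points of the form $x=\alpha\chi_{A}$ exist while $K(x)=\emptyset$ (the Amemiya infimum is not attained), so your entire $k_{0}$-based scheme is vacuous there; the paper treats this case separately, however tersely. Second, when $kx=\alpha\chi_{A}$ with $\alpha\in S^{\prime}$, the strict-convexity machinery of Theorem \ref{3} does not transfer ``verbatim'': the paper instead converts the equality chain into the statement that $k_{n}y_{n}^{*}$ and $h_{n}z_{n}^{*}$ are constant multiples of $\chi_{[0,\mu A]}$ and concludes from $\alpha_{n}^{1}-\alpha_{n}^{2}\to 0$. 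By contrast, your necessity argument (adapting the construction of Theorem \ref{3} at a fixed scale, with the comparison $\|y_{n}-z_{n}\|^{o}\ge\|y_{n}-z_{n}\|_{\varphi,\omega}$) is sound in outline and is actually more self-contained than the paper, which merely cites \cite{WangSEP2023} for necessity; it only needs the minor repair of choosing a near-optimal Amemiya parameter when the infimum is not attained.
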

\begin{proof}
\textsl{Sufficiency. }
We employ a method different from that in \cite{WangSEP2023} to prove the sufficiency.
At the beginning we will consider the case $\lim_{t\rightarrow\infty}\frac{\varphi(t)}{t}=\infty$ or $\lim_{u\rightarrow\infty}\frac{\varphi(t)}{t}=B<\infty$ and $\psi(B)\int_{0}^{\mu(supp~x)}\omega(t)dt>1$, therefore
 $K(x)\neq \emptyset$.
Assume $x=\frac{y_{n}+z_{n}}{2}$ and $\|y_{n}\|_{\varphi, \omega}^{o}=\frac{1}{k_{n}}\left( 1+\rho_{\varphi, \omega}(k_{n}y_{n}) \right)$, $\|z_{n}\|_{\varphi, \omega}^{o}=\frac{1}{h_{n}}\left( 1+\rho_{\varphi, \omega}(h_{n}z_{n})\right)$.
Since $x$ is an extreme point, we will first discuss the case that $kx(t)\in S$ where $k\in K(x)$. Following the method of \cite{WangSEP2023}, $\{k_{n}\}_{n\in N}$ and $\{h_{n}\}_{n\in N}$ are bounded. Then for arbitrary $\delta>0$ we have
\begin{equation}
\mu\{t\in R^{+}: |y_{n}(t)|>M\}<\frac{\delta}{4},\;\mu\{t\in R^{+}: |z_{n}(t)|>M\}<\frac{\delta}{4}.
\end{equation}
Since
\begin{align*}
\|y_{n}\|_{\varphi, \omega}^{o}+\|z_{n}\|_{\varphi, \omega}^{o}-2\|x\|_{\varphi,\omega}^{o}
&\geq\frac{1}{k_{n}}(1+\rho_{\varphi, \omega}(k_{n}y_{n}))+\frac{1}{h_{n}}(1+\rho_{\varphi, \omega}(h_{n}z_{n}))\\
&\qquad-2\frac{k_{n}+h_{n}}{2k_{n}h_{n}}(1+\rho_{\varphi, \omega}(\frac{2k_{n}h_{n}}{k_{n}+h_{n}}x))\\
&=\frac{k_{n}+h_{n}}{k_{n}h_{n}}
\left(\frac{h_{n}}{k_{n}+h_{n}}
\int_{R^{+}}\varphi(k_{n}y_{n}^{*}(t))\omega(t)dt\right.\\
 &\qquad+\frac{k_{n}}{k_{n}+h_{n}}
 \int_{R^{+}}\varphi(h_{n}z_{n}^{*}(t))\omega(t)dt \\ &\left.\qquad-\varphi(\frac{2k_{n}h_{n}}{k_{n}+h_{n}}x(\sigma^{-1}(t)))\omega(t)dt\right)\\
 &\geq\frac{k_{n}+h_{n}}{k_{n}h_{n}}\left(\frac{h_{n}}{k_{n}+h_{n}}\int_{R^{+}}\varphi(k_{n}y_{n}(\sigma^{-1}(t)))\omega(t)dt\right.\\
 &\qquad+\frac{k_{n}}{k_{n}+h_{n}}\int_{R^{+}}\varphi(h_{n}z_{n}(\sigma^{-1}(t)))\omega(t)dt\\
 &\left.\qquad-\varphi(\frac{k_{n}h_{n}}{k_{n}+h_{n}}y_{n}(\sigma^{-1}(t))
 +z_{n}(\sigma^{-1}(t)))\omega(t)dt\right)\\
 &\geq\frac{k_{n}+h_{n}}{k_{n}h_{n}}\int_{B_{n}}\left( \frac{h_{n}}{k_{n}+h_{n}}\varphi(k_{n}y_{n})
 +\frac{k_{n}}{k_{n}+h_{n}}\varphi(h_{n}z_{n})\right.\\
 &\qquad\left.-\varphi(\frac{h_{n}}{k_{n}+h_{n}}k_{n}y_{n}
 +\frac{k_{n}}{k_{n}+h_{n}}h_{n}z_{n})\right)\omega(\sigma(t))dt\\
 &\geq 0.
\end{align*}
Then following the method of Theorem \ref{3}, we can obtain the conclusion that
\begin{equation*}
k_{n}y_{n}\stackrel{\mu}\rightarrow kx;\;
h_{n}z_{n}\stackrel{\mu}\rightarrow kx\;\;(n\rightarrow\infty).
\end{equation*}
Then
\begin{align*}
\lim_{n\rightarrow\infty}k_{n}=\lim_{n\rightarrow\infty}\|k_{n}y_{n}\|=k\|x\|=k.\\
\lim_{n\rightarrow\infty}h_{n}=\lim_{n\rightarrow\infty}\|h_{n}z_{n}\|=k\|x\|=k.
\end{align*}
Thus
\begin{equation*}
\|y_{n}-z_{n}\|=\lim_{n\rightarrow\infty}\|k_{n}y_{n}-h_{n}z_{n}\|=0
\end{equation*}
and $x$ is an strongly extreme point of $B(\Lambda_{\varphi, \omega}^{o})$.

If $x\in S(\Lambda_{\varphi, \omega}^{o})$ and $kx=\alpha\chi_{A}$ where $\alpha\in S^{\prime}$, we have
\begin{equation}\label{11}
\begin{aligned}
2
&\geq \|y_{n}\|_{\varphi, \omega}^{o}+\|z_{n}\|_{\varphi, \omega}^{o}\\
&\geq\frac{k_{n}+h_{n}}{k_{n}h_{n}}\left( 1+\int_{R^{+}}\left(
\frac{h_{n}}{k_{n}+h_{n}}\varphi(k_{n}y_{n}^{*}(t))+\frac{k_{n}}{k_{n}+h_{n}}\varphi(h_{n}z_{n}^{*}(t))
\right)\omega(t)dt \right)\\
&\geq\frac{k_{n}+h_{n}}{k_{n}h_{n}}
\left(1+\int_{R^{+}}\left(\frac{h_{n}}{k_{n}+h_{n}}\varphi(k_{n}y_{n}(\sigma^{-1}(t)))
+\frac{k_{n}}{k_{n}+h_{n}}\varphi(h_{n}z_{n}(\sigma^{-1}(t)))\right)\omega(t)dt \right)\\
&\geq\frac{k_{n}+h_{n}}{k_{n}h_{n}}
\left(\varphi
\left(
\frac{k_{n}h_{n}}{k_{n}+h_{n}}(y_{n}+z_{n})
\right)
\right)\omega(t)dt\\
&\geq 2\frac{k_{n}+h_{n}}{2k_{n}h_{n}}\int_{R^{+}}\varphi\left(
\frac{2k_{n}h_{n}}{k_{n}+h_{n}}x^{*}(t)\right)\omega(t)dt\\
&\geq 2\|x\|_{\varphi, \omega}^{o}\\
&=2.
\end{aligned}
\end{equation}
Then $\frac{2k_{n}h_{n}}{k_{n}+h_{n}}\in K(x)$ and $\frac{2k_{n}h_{n}}{k_{n}+h_{n}}x^{*}(t)=\alpha\chi_{A}$. Then
\begin{align*}
\frac{h_{n}}{k_{n}+h_{n}}\varphi(k_{n}y^{*}_{n}(t))
+\frac{k_{n}}{k_{n}+h_{n}}\varphi(h_{n}z^{*}_{n}(t))=\varphi(\alpha), \;\;t\in [0, \mu A],\\
\frac{h_{n}}{k_{n}+h_{n}}\varphi(k_{n}y_{n}^{*}(t))+\frac{k_{n}}{k_{n}+h_{n}}\varphi(h_{n}z_{n}^{*}(t))=\varphi(0), \;\ t\notin [0, \mu A].
\end{align*}
For an $n\in N$, since $k_{n}y_{n}^{*}(t)$ and $h_{n}g_{n}^{*}(t)$ are decreasing,
it is obviously that there exist constants $\alpha_{n}^{1}$ and $\alpha_{n}^{2}$ such that $k_{n}y_{n}^{*}(t)=\alpha_{1}\chi_{[0, \mu A]}$ and $h_{n}z_{n}^{*}(t)=\alpha_{2}\chi_{[0, \mu A]}$.
Then we have
\begin{equation*}
\|y_{n}\|_{\varphi, \omega}^{o}=\alpha_{n}^{1}\|\chi_{A}\|_{\varphi, \omega}^{o}\rightarrow 1\:(n\rightarrow \infty),\\
\|z_{n}\|_{\varphi, \omega}^{o}=\alpha_{n}^{2}\|\chi_{A}\|_{\varphi, \omega}^{o}\rightarrow 1\:(n\rightarrow \infty).
\end{equation*}
Thus $\alpha_{n}^{1}-\alpha_{n}^{2}\rightarrow 0$. From formula (\ref{11}), we have $y_{n}^{*}(t)-y_{n}(\sigma^{-1}(t))\rightarrow 0$ as $n\rightarrow\infty$.
\begin{align*}
\rho_{\varphi, \omega}(y_{n}-z_{n})
&=\rho_{\varphi, \omega}(y_{n}(\sigma^{-1}(t))-z_{n}(\sigma^{-1}(t)))\rightarrow 0\;\: (n\rightarrow\infty).
\end{align*}
Thus $x$ is a strongly exposed point of $B(\Lambda_{\varphi, \omega})$.

Finally we consider the case that $\lim_{t\rightarrow\infty}\frac{\varphi(t)}{t}=B<\infty$ and $\psi(B)\int_{0}^{\mu(supp~x)}\omega(t)dt\leq 1$. The proof process is obviously.\\
\textsl{Necessity.} Please refer to \cite{WangSEP2023}.
\end{proof}
\subsection{Norm expression of $\|\cdot\|_{\mathcal{M}_{\varphi, \omega}}$}\label{Normexpression}
\begin{lemma}\cite{2019abstractlorentz}
Let $\varphi$ be an Orlicz function and $f\in L_{0}$ such that $P_{\varphi, \omega}(f)<\infty$.
Then the inverse function $\omega^{f^{*}}$ satisfies $\omega^{f^{*}}\prec \omega$ and
\begin{align*}
&P_{\varphi, \omega}(f)=\int_{R^{+}}\varphi(\frac{(f^{*})^{0}}{\omega})\omega(t)dt
=\int_{R^{+}}\varphi(\frac{f^{*}}{\omega^{f^{*}}})\omega^{f^{*}}(t)dt,\\
&\rho_{\psi, \omega}\left(p\left(\frac{(f^{*})^{0}}{\omega}\right)\right)
=\int_{R^{+}}\psi\left(p\left(\frac{f^{*}}{\omega^{f^{*}}}\right)\right)
\omega^{f^{*}}(t)dt.
\end{align*}
\end{lemma}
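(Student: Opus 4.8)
The plan is to exhibit $g=\omega^{f^{*}}$ as the minimizer of the infimum defining $P_{\varphi,\omega}(f)$ and then recognize the two displayed integrals as two bookkeeping formats for the same value. Everything rests on one elementary consequence of the definitions: off the maximal level intervals $(a_{n},b_{n})$ of $f^{*}$ one has $(f^{*})^{0}=f^{*}$ and $\omega^{f^{*}}=\omega$, while on each $(a_{n},b_{n})$ both $(f^{*})^{0}/\omega$ and $f^{*}/\omega^{f^{*}}$ equal the constant $R_{n}:=R(a_{n},b_{n})$. Hence a single function $v:=(f^{*})^{0}/\omega=f^{*}/\omega^{f^{*}}$ is well defined; it is non-increasing (this is precisely the defining property of the level function, \cite{Halperin195305}), it is constant on each level interval, and it satisfies $\int_{a_{n}}^{b_{n}}\omega^{f^{*}}=F(a_{n},b_{n})/R_{n}=W(a_{n},b_{n})=\int_{a_{n}}^{b_{n}}\omega$ with $F(a_{n},b_{n})=R_{n}W(a_{n},b_{n})$. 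Comparing $\int\varphi(v)\,\omega\,dt$ with $\int\varphi(v)\,\omega^{f^{*}}\,dt$ interval by interval — they agree off the level intervals where $\omega^{f^{*}}=\omega$, and on each $(a_{n},b_{n})$ both equal $\varphi(R_{n})W(a_{n},b_{n})$ because $\int_{a_{n}}^{b_{n}}\omega^{f^{*}}=W(a_{n},b_{n})$ — yields the middle equality $\int\varphi((f^{*})^{0}/\omega)\omega\,dt=\int\varphi(f^{*}/\omega^{f^{*}})\omega^{f^{*}}\,dt$ at once.

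Next I would check $\omega^{f^{*}}\prec\omega$. Since $\omega^{f^{*}}$ is non-increasing ($\omega$ off the level intervals, $f^{*}/R_{n}$ on them, glued monotonically by the level structure), submajorization reduces to $\int_{0}^{t}\omega^{f^{*}}\le W(t)$ for all $t$. This is trivial off the level intervals, and for $t\in(a_{n},b_{n})$ the level-interval inequality $R(a_{n},t)\le R_{n}$ gives $F(a_{n},t)\le R_{n}W(a_{n},t)$, so that $\int_{0}^{t}\omega^{f^{*}}=W(a_{n})+F(a_{n},t)/R_{n}\le W(a_{n})+W(a_{n},t)=W(t)$. In particular $g=\omega^{f^{*}}$ is admissible, and substituting it into the defining quantity $\int_{R^{+}}\varphi(f^{*}/g)\,g\,dt$ gives the upper bound $P_{\varphi,\omega}(f)\le\int\varphi(f^{*}/\omega^{f^{*}})\omega^{f^{*}}\,dt$.

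The heart of the argument is the matching lower bound $\int\varphi(f^{*}/g)\,g\,dt\ge\int\varphi(f^{*}/\omega^{f^{*}})\omega^{f^{*}}\,dt$ for every admissible $g\prec\omega$. Here I would linearize the perspective $\Phi_{t}(u):=u\,\varphi(f^{*}(t)/u)$, which is convex in $u$ with right derivative $\Phi_{t}'(u)=\varphi(f^{*}/u)-(f^{*}/u)p(f^{*}/u)=-\psi(p(f^{*}/u))$, the last step being Young's equality. The subgradient inequality at $u=\omega^{f^{*}}(t)$ reduces the claim to $\int\psi(p(v))\,(g-\omega^{f^{*}})\,dt\le0$. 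Now $\psi(p(v))$ is non-increasing (as $v$ is) and, crucially, constant on each level interval; combined with $\int_{a_{n}}^{b_{n}}\omega^{f^{*}}=\int_{a_{n}}^{b_{n}}\omega$ this lets me replace $\omega^{f^{*}}$ by $\omega$ in that integral, leaving $\int\psi(p(v))\,(g-\omega)\,dt\le0$. Writing $h:=\psi(p(v))\ge0$ non-increasing, the Hardy--Littlewood inequality gives $\int hg\le\int h g^{*}$, and Hardy's lemma applied to the non-increasing $h$ together with $g\prec\omega$ (i.e.\ $\int_{0}^{t}g^{*}\le W(t)$) gives $\int h g^{*}\le\int h\omega$; chaining these yields the inequality. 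I expect this lower bound — specifically the interplay of the perspective's convexity, the constancy of $v$ on level intervals, and the submajorization constraint — to be the main obstacle, since the equal-integral identities on level intervals are exactly what convert the $\omega^{f^{*}}$-budget into the $\omega$-budget that Hardy's lemma can control; the rest is bookkeeping with the definitions.

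Finally, the complementary-function identity follows from the same function $v$. Because $v$ is non-increasing and $p$ is non-decreasing, $p(v)$ is non-increasing, so its decreasing rearrangement is itself and $\rho_{\psi,\omega}(p((f^{*})^{0}/\omega))=\int\psi(p(v))\,\omega\,dt$; converting the measure on each level interval exactly as in the first paragraph (using $\int_{a_{n}}^{b_{n}}\omega^{f^{*}}=W(a_{n},b_{n})$ and the constancy $p(v)=p(R_{n})$ there) turns this into $\int\psi(p(f^{*}/\omega^{f^{*}}))\,\omega^{f^{*}}\,dt$, which is the required identity.
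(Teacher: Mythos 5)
The paper itself offers no proof of this lemma --- it is quoted with a citation to Kamińska--Raynaud \cite{2019abstractlorentz} --- so your attempt can only be measured against the source, and as a self-contained derivation it is essentially sound: the identification of the single non-increasing function $v=(f^{*})^{0}/\omega=f^{*}/\omega^{f^{*}}$, the equal-mass identity $\int_{a_{n}}^{b_{n}}\omega^{f^{*}}=W(a_{n},b_{n})$, the verification of $\omega^{f^{*}}\prec\omega$ from $R(a_{1},s)\le R(a_{1},b_{1})$, and the reduction of optimality of $g=\omega^{f^{*}}$ via convexity of the perspective $u\mapsto u\varphi(f^{*}/u)$ (whose right derivative is $-\psi(p(f^{*}/u))$ by Young's equality) to a Hardy--Littlewood-plus-Hardy comparison are all correct and recover the cited result. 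Two steps, however, are asserted rather than proved and deserve repair. First, the monotonicity of $\omega^{f^{*}}$ (``glued monotonically by the level structure'') is genuinely needed --- without it your reduction of $\omega^{f^{*}}\prec\omega$ to $\int_{0}^{t}\omega^{f^{*}}\le W(t)$ fails, since $\prec$ is defined through the rearrangement --- and it is not automatic at the junctions: at the right endpoint of a maximal level interval one must subtract $F(a_{n},s)\le R_{n}W(a_{n},s)$ from $F(a_{n},b_{n})=R_{n}W(a_{n},b_{n})$ to get $F(s,b_{n})\ge R_{n}W(s,b_{n})$, whence $f^{*}(b_{n}^{-})\ge R_{n}\,\omega(b_{n}^{-})\ge R_{n}\,\omega(b_{n}^{+})$ as $s\uparrow b_{n}$; the symmetric limit $f^{*}(a_{n}^{+})\le R_{n}\,\omega(a_{n}^{+})$ handles the left junction. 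Second, your subgradient integration can degenerate to $\infty-\infty$, because $P_{\varphi,\omega}(f)<\infty$ does not force $\int\psi(p(v))\,\omega\,dt<\infty$; this is repaired by rewriting the pointwise subgradient inequality in the Young form
\begin{equation*}
\varphi\Bigl(\frac{f^{*}}{g}\Bigr)\,g+\psi(p(v))\,g\;\ge\;f^{*}\,p(v)\;=\;\varphi(v)\,\omega^{f^{*}}+\psi(p(v))\,\omega^{f^{*}},
\end{equation*}
in which every term is nonnegative, and then truncating before applying your $\int\psi(p(v))\,g\le\int\psi(p(v))\,\omega=\int\psi(p(v))\,\omega^{f^{*}}$ step. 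Finally, note that the paper's displayed definition of $P_{\varphi,\omega}$ omits the factor $|g|$ (a misprint relative to \cite{2019abstractlorentz}); you correctly worked with $\int\varphi(f^{*}/|g|)\,|g|\,dt$, and it would be worth saying so explicitly.
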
Let explore the expressions of Orlicz norm in $\mathcal{M}_{\varphi, \omega}^{o}$ generated by arbitrary Orlicz function.
\begin{theorem}\label{Norminkothedual}
Assume $\varphi$ is an Orlicz function and $\omega$ is a decreasing weight. Let $v\in\mathcal{M}_{\varphi, \omega}^{o}$. \\
(1)If Orlicz function $\varphi$ is an $N$-function, then $k^{**}_{\mathcal{M}}<\infty$\\
(2)If $\varphi$ satisfies $\lim_{u\rightarrow \infty}$ $\frac{\varphi(t)}{t}=B<\infty$ and $\lim_{t\rightarrow\infty}Bt-\varphi(t)=\infty$, then $k^{**}_{\mathcal{M}}(v)<\infty$.\\
(3)If $\varphi$ satisfies $\lim_{t\rightarrow\infty}\frac{\varphi(t)}{t}=B<\infty$ and $\lim_{t\rightarrow \infty}Bt-\varphi(t)<\infty$, $v\in\mathcal{M}_{\varphi, \omega}^{o}$ satisfies $\psi(B)\int_{0}^{\mu(supp~v)}\omega(t)dt>1$, then $k_{\mathcal{M}}^{**}<\infty$.\\
(4)If $\varphi$ satisfies $\lim_{t\rightarrow\infty}\frac{\varphi(t)}{t}=B<\infty$ and $\lim_{t\rightarrow\infty}Bt-\varphi(t)<\infty$. $v\in\mathcal{M}_{\varphi, \omega}^{o}$ and $\psi(B)\int_{0}^{\mu(supp~v)}\omega(t)dt\leq 1$, then $K_{\mathcal{M}}(v)=\emptyset$.\\
In case (1)(2)(3),
$\|v\|_{\mathcal{M}_{\varphi, \omega}^{o}}=\frac{1}{k}(1+P_{\varphi, \omega}(kx))$ where $v\in K_{\mathcal{M}}(v)$.
In case (4), $\|v\|_{\mathcal{M}_{\varphi, \omega}^{o}}=B\int_{0}^{\mu(supp~v)}v^{*}(t)dt$.
\end{theorem}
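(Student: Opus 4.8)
The plan is to analyse the norm through its Amemiya representation. Write $\Phi(k):=\tfrac1k\bigl(1+P_{\varphi,\omega}(kv)\bigr)$, so that $\|v\|_{\mathcal{M}_{\varphi,\omega}^{o}}=\inf_{k>0}\Phi(k)$, and introduce the non-decreasing function
$$H(k):=\int_{0}^{\infty}\psi\Bigl(p\bigl(\tfrac{kv^{*}(t)}{\omega^{v^{*}}(t)}\bigr)\Bigr)\,\omega^{v^{*}}(t)\,dt,$$
which is exactly the quantity defining $k_{\mathcal{M}}^{*},k_{\mathcal{M}}^{**}$, hence $K_{\mathcal{M}}(v)$. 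Since scaling $v^{*}$ by $k$ leaves the level intervals unchanged, $\omega^{(kv)^{*}}=\omega^{v^{*}}$, so by the lemma \cite{2019abstractlorentz} one has $P_{\varphi,\omega}(kv)=\int_{0}^{\infty}\varphi(kv^{*}/\omega^{v^{*}})\,\omega^{v^{*}}\,dt$; differentiating in $k$ (using the right derivative $p$) the weight $\omega^{v^{*}}$ cancels, and the Young equality $up(u)-\varphi(u)=\psi(p(u))$ with $u=kv^{*}/\omega^{v^{*}}$ yields the key identity $\Phi'(k)=k^{-2}\bigl(H(k)-1\bigr)$. Moreover, in the variable $s=1/k$ the map $s\mapsto s\,(1+P_{\varphi,\omega}(v/s))$ is convex (it is an $\omega^{v^{*}}$-integral of perspectives $s\varphi(\,\cdot/s\,)$), so $\Phi$ attains its infimum precisely where $0$ lies in its subdifferential, i.e. where $H$ meets the level $1$.

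Next I would evaluate $L:=\lim_{k\to\infty}H(k)$. For each fixed $t\in(0,\mu(\mathrm{supp}\,v))$ one has $v^{*}(t)>0$, hence $kv^{*}(t)/\omega^{v^{*}}(t)\to\infty$ and $\psi(p(\cdot))\to\psi(p(\infty))$; monotone convergence together with the fact that the level-function construction preserves the weight-integral, $\int_{0}^{\mu(\mathrm{supp}\,v)}\omega^{v^{*}}=\int_{0}^{\mu(\mathrm{supp}\,v)}\omega$, gives $L=\psi(p(\infty))\int_{0}^{\mu(\mathrm{supp}\,v)}\omega\,dt$. Recording that $p(\infty)=\infty$ iff $\varphi$ is an $N$-function, that otherwise $p(\infty)=B=\lim_{t\to\infty}\varphi(t)/t$, and that $\psi(B)=\lim_{t\to\infty}\bigl(Bt-\varphi(t)\bigr)$, the four hypotheses become statements about $L$: in cases (1) and (2) $\psi(p(\infty))=\infty$, so $L=\infty$; in case (3) $L=\psi(B)\int_{0}^{\mu(\mathrm{supp}\,v)}\omega\,dt>1$; and in case (4) $L=\psi(B)\int_{0}^{\mu(\mathrm{supp}\,v)}\omega\,dt\le 1$.

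Now I would assemble the conclusions. Since $H(0^{+})=\psi(p(0))\int_{0}^{\mu(\mathrm{supp}\,v)}\omega=0$ (because $\psi\equiv0$ on $[0,p(0)]$), in cases (1)--(3) we have $H(0^{+})<1<L$, so $k_{\mathcal{M}}^{**}<\infty$, $K_{\mathcal{M}}(v)\neq\emptyset$, and the convex $\Phi$ attains its minimum at some $k\in K_{\mathcal{M}}(v)$; when $H(k)=1$ is actually solvable this is \cite[Theorem 2.8]{Wang2024}, giving $\|v\|_{\mathcal{M}_{\varphi,\omega}^{o}}=\tfrac1k(1+P_{\varphi,\omega}(kv))$. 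In case (4), $H(k)<L\le1$ for every finite $k$, so $\Phi'(k)=k^{-2}(H(k)-1)<0$ and $\Phi$ is strictly decreasing; thus $K_{\mathcal{M}}(v)=\emptyset$ and $\|v\|_{\mathcal{M}_{\varphi,\omega}^{o}}=\lim_{k\to\infty}\Phi(k)$. Writing $\tfrac1k P_{\varphi,\omega}(kv)=\int_{0}^{\mu(\mathrm{supp}\,v)}\tfrac{\varphi(kv^{*}/\omega^{v^{*}})}{kv^{*}/\omega^{v^{*}}}\,v^{*}\,dt$ and using $\varphi(u)/u\uparrow B$ with monotone convergence, this limit equals $B\int_{0}^{\mu(\mathrm{supp}\,v)}v^{*}(t)\,dt$, which is the asserted value.

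The main obstacle is the borderline regularity. In cases (1)--(3) the right derivative $p$, hence $H$, may have a genuine jump across the level $1$ (this happens exactly when $v^{*}/\omega^{v^{*}}$ is constant on a set of positive measure whose value meets a jump point of $p$); then no $k$ solves $H(k)=1$ and one cannot invoke \cite[Theorem 2.8]{Wang2024} verbatim. The remedy is to fall back on the convexity of $\Phi$ in $s=1/k$: the infimum is still attained at the unique $k_{0}$ with $0$ in the subdifferential, $k_{0}\in K_{\mathcal{M}}(v)$, and $\|v\|_{\mathcal{M}_{\varphi,\omega}^{o}}=\Phi(k_{0})$. The second delicate point is the strict inequality $H(k)<L$ in case (4): because $v^{*}(t)\to0$ as $t\uparrow\mu(\mathrm{supp}\,v)$, the ratio $kv^{*}(t)/\omega^{v^{*}}(t)$ stays bounded near the edge of the support for each fixed $k$, so $p$ cannot saturate at $B$ on a full-measure set and $H$ remains strictly below $L$; this is what forces the infimum to be a limit rather than an attained minimum. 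Verifying these two regularity points, alongside the identity $\psi(B)=\lim_{t\to\infty}(Bt-\varphi(t))$, is the technical heart of the argument.
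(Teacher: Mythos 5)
Your proposal is correct in substance, but it takes a genuinely different route from the paper's. The paper handles the four cases by separate, more ad hoc arguments: case (1) is delegated to \cite{Wang2024}; in cases (2) and (3) it produces, by direct construction (citing \cite{Halperin195305} in (2), and choosing $u_v$ with $\psi(p(u_v))\int_0^a\omega>1$ in (3)), an explicit finite $k_v$ at which the defining integral reaches $1$, which bounds $k^{**}_{\mathcal{M}}$; and in case (4) it abandons the Amemiya functional entirely and argues by duality: every $x$ with $\rho_{\psi,\omega}(x)\le 1$ satisfies $x\le B$ a.e., so $\sup\int xv\le B\int_0^\infty v^*$, and the supremum is attained at $x_0=B\chi_{[0,\mu(\mathrm{supp}\,v)]}$, which is feasible exactly because $\psi(B)\int_0^{\mu(\mathrm{supp}\,v)}\omega\le 1$. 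Your approach unifies all four cases through the single monotone function $H(k)$, the derivative identity $\Phi'(k)=k^{-2}(H(k)-1)$ (legitimate here because $\omega^{(kv)^*}=\omega^{v^*}$ fixes the weight, so $P_{\varphi,\omega}(kv)$ is an honest integral by the lemma from \cite{2019abstractlorentz}), and the computation of $L=\lim_{k\to\infty}H(k)=\psi(p(\infty))\int_0^{\mu(\mathrm{supp}\,v)}\omega$, after which the four hypotheses become $L=\infty$, $L=\infty$, $L>1$, $L\le1$. This buys two things the paper does not make explicit: a proof of $K_{\mathcal{M}}(v)=\emptyset$ in case (4) (the paper's proof of (4) only establishes the norm formula), and a treatment of the borderline case in (1)--(3) where $H$ jumps over the level $1$, so that no $k$ solves $H(k)=1$ and the paper's cited lemma \cite[Theorem 2.8]{Wang2024} does not apply verbatim; your convexity/subdifferential argument at $k_0=k^*_{\mathcal{M}}=k^{**}_{\mathcal{M}}$ closes that gap, which the paper leaves implicit.

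One side remark in your final paragraph is wrong, though harmlessly so: the strict inequality $H(k)<L$ in case (4) can fail. If $p$ attains the value $B$ at a finite point and $v^*=\chi_{[0,1]}$, say, then for large $k$ one has $H(k)=\psi(B)\int_0^{\mu(\mathrm{supp}\,v)}\omega=L$ exactly; your heuristic that $v^*(t)\to0$ at the edge of the support also fails for such $v$. When moreover $L=1$, $\Phi$ becomes constant beyond some finite $k_1$ and the infimum \emph{is} attained, contrary to your claim that it is only a limit. None of this damages the theorem: $K_{\mathcal{M}}(v)=\emptyset$ in the paper's convention follows from $H(k)\le L\le1$ for all $k$ (so $k^{**}_{\mathcal{M}}=\infty$), and the norm formula needs only that $\Phi$ is nonincreasing together with your monotone-convergence evaluation $\lim_{k\to\infty}\tfrac1k P_{\varphi,\omega}(kv)=B\int_0^{\mu(\mathrm{supp}\,v)}v^*(t)\,dt$. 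So you should simply delete the strictness claim rather than try to prove it.
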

\begin{proof}
(1)The proof is in \cite[Theorem 2.11]{Wang2024}.\\
(2)
The condition $\lim_{t\rightarrow\infty}Bt-\varphi(t)=\infty$ is equivalent to $\psi(B)=\infty$. For all $v\in\mathcal{M}_{\varphi, \omega}^{o}$ and $v\neq 0$, from \cite[Theorem 3.6]{Halperin195305} there exists $k_{v}$ and $t_{v}$ such that
\begin{equation*}
\int_{0}^{t_{v}}\psi\left(p\left(\frac{k_{v}(v^{*})^{0}(t_{v})}{\omega(t_{v})}\right)\right)\omega(t_{v})dt\geq1
\end{equation*}
thus $k^{*}_{\mathcal{M}}\leq k^{**}_{\mathcal{M}}\leq k_{v}$.\\
(3)
Let $a\in R^{+}$ such that
\begin{equation*}
\psi(B)\int_{0}^{a}\omega(t)dt>1
\end{equation*}
Since $\lim_{u\rightarrow\infty}p(u)=B$, there exists $u_{v}>0$ for which $\psi(p(u_{v}))\int_{0}^{a}\omega(t)dt>1$. Define $k^{\prime}_{v}=\frac{u_{v}\omega(a)}{k(f^{*})^{o}(a)}$ then
\begin{equation*}
\int_{R^{+}}\psi(p(\frac{u_{v}\omega(a)}{(f^{*})^{0}(a)}
\frac{(f^{*})^{0}(t)}{\omega(t)}))\omega(t)dt\geq \int_{R^{+}}\psi(p(u_{v}))\omega(t)dt\geq 1
\end{equation*}
Thus $k^{*}_{\mathcal{M}}\leq k^{**}_{\mathcal{M}}\leq k_{v}$.\\
(4)
Note that $\psi(B)<\infty$ and $\psi(u)=\infty$ for any $u>B$. Hence, for any $x\in \mathcal{M}_{\varphi, \omega}$ such that $\rho_{\psi, \omega}(x)\leq 1$, we get that $x(t)\leq B$ a.e. in $R^{+}$ and
\begin{equation*}
\int_{R^{+}}x(t)v(t)dt\leq \int_{0}^{\infty}x^{*}(t)v^{*}(t)dt\leq B\int_{0}^{\infty}v^{*}(t)dt
\end{equation*}
For $x_{0}=B\chi_{[0, \mu(supp~v)]}$ we have $\rho_{\psi, \omega}(x_{0})=\psi(B)\int_{0}^{\mu(supp~v)}\omega(t)dt\leq 1$ and
\begin{equation*}
\int_{R^{+}}x(t)v(t)dt=\int_{0}^{\infty}x^{*}(t)v^{*}(t)dt=B\int_{0}^{\mu(supp~v)}\omega(t)dt
\end{equation*}
Thus $\|v\|_{\mathcal{M}_{\varphi, \omega}^{o}}=B\int_{0}^{\mu(supp~v)}v^{*}(t)dt$.
\end{proof}
\subsection{Supporting functionals}\label{Supportingfunctional}
\begin{theorem}
Let $\varphi$ be an Orlicz function satisfying
$\lim_{u\rightarrow\infty}\frac{\psi(u)}{u}=B<\infty$ and $\lim_{u\rightarrow\infty}Bu-\psi(u)<\infty$.
If $v\in\mathcal{M}_{\varphi, \omega}^{o}$ attain its norm at $x\in \Lambda_{\varphi,\omega}$, then $\|x\|_{\varphi, \omega}=\frac{1}{B}x^{*}(0)$.
\end{theorem}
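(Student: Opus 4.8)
The plan is to bound $\|x\|_{\varphi,\omega}$ both below and above by $\tfrac1B x^{*}(0)$, the lower bound being a structural consequence of the hypotheses on $\psi$ and the upper bound coming from norm attainment. First I would read off the shape of $\varphi$ from the two conditions. Using $\varphi(s)=\sup_{u>0}\{su-\psi(u)\}$ and writing $su-\psi(u)=(s-B)u+(Bu-\psi(u))$ with $Bu-\psi(u)$ nondecreasing in $u$, the condition $\lim_{u\to\infty}\psi(u)/u=B$ forces $\varphi(s)=+\infty$ for $s>B$, while $\lim_{u\to\infty}(Bu-\psi(u))<\infty$ gives the finite value $\varphi(B)=\lim_{u\to\infty}(Bu-\psi(u))$. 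Thus $\varphi$ is finite on $[0,B]$ and $+\infty$ on $(B,\infty)$. For $x\in\Lambda_{\varphi,\omega}$ and $\lambda>0$ the modular $\rho_{\varphi,\omega}(x/\lambda)=\int_{0}^{\infty}\varphi(x^{*}(t)/\lambda)\omega(t)\,dt$ can therefore be finite (hence $\le1$) only when $x^{*}(0)/\lambda\le B$; otherwise $x^{*}(t)/\lambda>B$ on a right neighbourhood of $0$ and the integral diverges. By the definition of the Luxemburg norm this yields the universal lower bound $\|x\|_{\varphi,\omega}\ge\tfrac1B x^{*}(0)$, valid for every $x\in\Lambda_{\varphi,\omega}$.

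For the reverse inequality I would combine the norm-attainment hypothesis with the endpoint norm of $v$. Under the present growth of $\psi$ the functional $v$ lies in the degenerate case $K_{\mathcal M}(v)=\emptyset$ of Theorem~\ref{Norminkothedual}, so that $\|v\|=B\int_{0}^{\mu(supp~v)}v^{*}(t)\,dt$. Assuming $v\neq0$, the hypothesis that $v$ attains its norm at $x$ reads $\int_{0}^{\infty}x(t)v(t)\,dt=\|x\|_{\varphi,\omega}\,\|v\|$. Applying the Hardy--Littlewood rearrangement inequality and then $x^{*}(t)\le x^{*}(0)$ together with $supp~v^{*}\subseteq[0,\mu(supp~v)]$, I obtain
\[
\|x\|_{\varphi,\omega}\,\|v\|=\int_{0}^{\infty}x\,v\,dt\le\int_{0}^{\infty}x^{*}v^{*}\,dt\le x^{*}(0)\int_{0}^{\mu(supp~v)}v^{*}(t)\,dt=\frac{x^{*}(0)}{B}\,\|v\|.
\]
Cancelling $\|v\|>0$ gives $\|x\|_{\varphi,\omega}\le\tfrac1B x^{*}(0)$, which together with the lower bound proves $\|x\|_{\varphi,\omega}=\tfrac1B x^{*}(0)$.

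The delicate point I expect to be the crux is the appeal to the endpoint formula $\|v\|=B\int_{0}^{\mu(supp~v)}v^{*}$: one must confirm that the growth conditions on $\psi$ really place $v$ in the regime $K_{\mathcal M}(v)=\emptyset$, rather than in the one where $\|v\|$ equals the Amemiya value $\tfrac1k(1+P_{\psi,\omega}(kv))$ with $k\in K_{\mathcal M}(v)$ (in that non-degenerate regime one can realize norm attainment at an $x$ with $x^{*}<B$, so the stated equality genuinely relies on being in the endpoint case). Following the computation behind Theorem~\ref{Norminkothedual}(4), one has $\lim_{k\to\infty}\int_{0}^{\infty}\varphi\big(q(kv^{*}/\omega)\big)\omega\,dt=\varphi(B)\,W(\mu(supp~v))$, so the endpoint regime is exactly $\varphi(B)\,W(\mu(supp~v))\le1$, and it is precisely the finiteness $\varphi(B)<\infty$ --- equivalent to $\lim_{u\to\infty}(Bu-\psi(u))<\infty$ --- that makes this value finite and the formula usable. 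Once the norm of $v$ is identified, the remaining ingredients (the Hardy--Littlewood inequality and the pointwise bound $x^{*}\le x^{*}(0)$) close the argument with no need to analyze the pointwise Young equality.
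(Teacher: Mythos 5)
Your two-bound strategy is correct in substance but follows a genuinely different route from the paper. The paper proves the identity by a single chain of equalities through K\"othe duality: it writes $\|x\|_{\varphi,\omega}=\sup\{\int_{R^{+}}vx:\ \|v\|_{\mathcal{M}^{o}}=1\}$, replaces the unit-sphere constraint by $B\int v^{*}=1$ via the endpoint norm formula, and then factorizes the pairing through the level weight $\omega^{v^{*}}$, so that the supremum becomes $\tfrac1B\|x^{*}\omega^{v^{*}}\|_{M_{W^{v^{*}}}}=\tfrac1B\sup_{\alpha}\bigl(\int_{0}^{\alpha}x^{*}\omega^{v^{*}}\big/\int_{0}^{\alpha}\omega^{v^{*}}\bigr)=\tfrac1B x^{*}(0)$. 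You instead derive the universal lower bound $\|x\|_{\varphi,\omega}\ge\tfrac1B x^{*}(0)$ directly from Young conjugacy ($\varphi\equiv\infty$ on $(B,\infty)$, $\varphi(B)=\lim_{u\to\infty}(Bu-\psi(u))<\infty$), and obtain the upper bound from attainment plus Hardy--Littlewood and $x^{*}\le x^{*}(0)$. Your route avoids the level-function machinery $\omega^{v^{*}}$, $M_{W^{v^{*}}}$ altogether and makes transparent which half of the equality uses attainment (only the upper bound); the paper's chain is shorter on the page but conceals exactly where attainment and the endpoint formula are invoked.

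There is, however, a genuine gap in your closing justification of the endpoint formula --- though it mirrors a lacuna in the theorem itself. You correctly identify that everything hinges on $\|v\|=B\int_{0}^{\mu(\mathrm{supp}\,v)}v^{*}$, i.e.\ on case (4) of Theorem~\ref{Norminkothedual}, whose extra hypothesis is (in the present roles of $\varphi,\psi$) $\varphi(B)\int_{0}^{\mu(\mathrm{supp}\,v)}\omega\le1$. But your final sentence, claiming that the finiteness $\varphi(B)<\infty$ ``makes the formula usable,'' is a non sequitur: finiteness of $\varphi(B)W(\mu(\mathrm{supp}\,v))$ neither follows (one may have $W(\mu(\mathrm{supp}\,v))=\infty$) nor, when finite, implies it is $\le1$. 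Indeed the statement fails without the extra condition: take $\psi(u)=\max(u^{2}/2,\,u-\tfrac12)$, so $B=1$, $\varphi(s)=s^{2}/2$ on $[0,1]$ and $\varphi=\infty$ beyond, and $v=\omega\chi_{[0,a]}$ with $W(a)>2$; then $\|v\|_{\mathcal{M}_{\psi,\omega}^{o}}=\sqrt{2W(a)}$ is attained at $x=\sqrt{2/W(a)}\,\chi_{[0,a]}$ with $\|x\|_{\varphi,\omega}=1$, while $\tfrac1B x^{*}(0)=\sqrt{2/W(a)}<1$ --- exactly the non-degenerate regime you yourself warned about. So the condition $\varphi(B)\int_{0}^{\mu(\mathrm{supp}\,v)}\omega\le1$ must be added as a hypothesis, not deduced. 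To be fair, the paper's own proof silently makes the identical assumption (its step replacing $\|v\|_{\mathcal{M}^{o}}=1$ by $B\int v^{*}=1$), and the companion theorem immediately following does carry the explicit hypothesis $\psi(B)\int_{0}^{\mu(\mathrm{supp}\,v)}\omega\le1$; your proposal is therefore exactly as valid as the paper's argument, and your explicit flagging of the endpoint regime is an improvement --- marred only by the erroneous final attempt to derive that regime from $\varphi(B)<\infty$ alone.
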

\begin{proof}
From \cite[Corallary 6.16, Theorem 8.9]{2019abstractlorentz} we have
\begin{align*}
\|x\|_{\Lambda_{\varphi, \omega}}
&=\|x\|_{(Q_{L_{\varphi}, \omega})^{\prime}}\\
&=\sup\left\{\int_{R^{+}}v    (t)x    (t)dt:~\|v\|_{\mathcal{M}_{\varphi, \omega}^{o}}=1\right\}\\
&=\sup\left\{\int_{R^{+}}v^{*}(t)x^{*}(t)dt:~\|v\|_{\mathcal{M}_{\varphi, \omega}^{o}}=1\right\}\\
&=\sup\left\{\int_{R^{+}}v^{*}(t)x^{*}(t)dt: ~B\int_{R^{+}}v^{*}(t)dt=1 \right\}\\
&=B\|\frac{v^{*}}{\omega^{v^{*}}}\|_{\Lambda_{\omega^{v^{*}}}}
\cdot\frac{1}{B}\|x^{*}(t)\omega^{v^{*}}(t)\|_{M_{W^{v^{*}}}}\\
&=\frac{1}{B}\|x^{*}(t)\omega^{v^{*}}(t)\|_{M_{W^{v^{*}}}}\\
&=\frac{1}{B}\sup_{\alpha\in R^{+}}\frac{\int_{0}^{\alpha}x^{*}(t)\omega^{v^{*}}(t)dt}{\int_{0}^{\alpha}\omega^{v^{*}}(t)dt}\\
&=\frac{1}{B}x^{*}(0).
\end{align*}
\end{proof}
\begin{theorem}
Let $\omega$ be an arbitrary decreasing weight. Assume $\varphi$ satisfying  $\lim_{t\rightarrow\infty}\frac{\psi(t)}{t}=B<\infty$ and $\lim_{t\rightarrow\infty}Bt-\psi(t)<\infty$. 
For $v\in\mathcal{M}_{\psi, \omega}$ and $\psi(B)\int_{0}^{\mu(supp~v)}\omega(t)dt\leq 1$, then $v$ is norm attainable at $x\in S(\Lambda_{\varphi, \omega})$ if and only if \\
(1)$\rho_{\varphi, \omega}(x)=1$ and\\
(2)$x(t)=B\chi_{supp~v}(t)+x_{0}(t)$ where $(supp~x_{0})\cap (supp~v)=\emptyset$ and $|x_{0}(t)|\leq B$.
\end{theorem}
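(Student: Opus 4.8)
The plan is to reduce the statement to the Hardy--Littlewood rearrangement inequality, combined with the explicit value of the dual norm furnished by Theorem \ref{Norminkothedual}. First I would unpack the two growth hypotheses on $\psi$. Since $\lim_{t\to\infty}\psi(t)/t=B$ and $\lim_{t\to\infty}(Bt-\psi(t))<\infty$, the complementary function $\varphi$ is finite on $[0,B]$ with $\varphi(B)=\lim_{t\to\infty}(Bt-\psi(t))<\infty$ and $\varphi(u)=+\infty$ for $u>B$. Hence every $f\in\Lambda_{\varphi,\omega}$ with $\rho_{\varphi,\omega}(f)<\infty$ obeys $|f|\le B$ a.e., and $\rho_{\varphi,\omega}(B\chi_{\mathrm{supp}\,v})=\varphi(B)\int_0^{\mu(\mathrm{supp}\,v)}\omega(t)\,dt$. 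The standing hypothesis places $v$ in case (4) of Theorem \ref{Norminkothedual} (applied to the Orlicz function $\psi$, whose complementary function is $\varphi$), so that $K_{\mathcal{M}}(v)=\emptyset$ and
\[
\|v\|_{\mathcal{M}_{\psi,\omega}^{o}}=B\int_0^{\mu(\mathrm{supp}\,v)}v^{*}(t)\,dt .
\]

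Next I would treat sufficiency. Assume (1) and (2); replacing $v$ by $|v|$ and aligning signs I may take $v\ge 0$. On $\mathrm{supp}\,v$ we have $x=B$, while $v=0$ off $\mathrm{supp}\,v$, so
\[
L_{v}(x)=\int_{R^{+}}x\,v\,dt=B\int_{\mathrm{supp}\,v}v\,dt=B\int_0^{\mu(\mathrm{supp}\,v)}v^{*}(t)\,dt=\|v\|_{\mathcal{M}_{\psi,\omega}^{o}} .
\]
Because $x$ attains the value $B$ on the positive-measure set $\mathrm{supp}\,v$ and $\varphi\equiv\infty$ on $(B,\infty)$, the Luxemburg definition together with $\rho_{\varphi,\omega}(x)=1$ gives $\|x\|_{\varphi,\omega}=1$; thus $L_{v}(x)=\|v\|\,\|x\|$ and $v$ attains its norm at $x$.

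For necessity, suppose $L_{v}(x)=\|v\|_{\mathcal{M}_{\psi,\omega}^{o}}$ with $x\in S(\Lambda_{\varphi,\omega})$. I would run the estimate
\[
\|v\|=L_{v}(x)=\int_{R^{+}}x\,v\,dt\le\int_{R^{+}}|x|\,|v|\,dt\le\int_0^{\infty}x^{*}(t)v^{*}(t)\,dt\le B\int_0^{\mu(\mathrm{supp}\,v)}v^{*}(t)\,dt=\|v\| ,
\]
where the penultimate inequality uses $x^{*}\le B$ and $\mathrm{supp}\,v^{*}=[0,\mu(\mathrm{supp}\,v))$. Every inequality must therefore be an equality. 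Equality in the last step forces $x^{*}(t)=B$ for a.e.\ $t\in[0,\mu(\mathrm{supp}\,v))$; equality in Hardy--Littlewood together with $\int xv=\int|x||v|$ forces $xv\ge 0$ and a common non-increasing arrangement of $|x|$ and $|v|$. Since the maximal value $B$ of $|x|$ must be carried by the set on which $|v|>0$, I would conclude $x=B\,\mathrm{sign}(v)$ on $\mathrm{supp}\,v$, i.e.\ $x=B\chi_{\mathrm{supp}\,v}+x_{0}$ with $\mathrm{supp}\,x_{0}\cap\mathrm{supp}\,v=\emptyset$; finiteness of $\rho_{\varphi,\omega}(x)$ yields $|x_{0}|\le B$, which is (2), while condition (1), $\rho_{\varphi,\omega}(x)=1$, encodes that $x$ lies on the sphere.

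I expect the main obstacle to be the equality analysis of the Hardy--Littlewood inequality: passing from ``$x^{*}=B$ on an initial interval of length $\mu(\mathrm{supp}\,v)$'' to ``$|x|=B$ precisely on $\mathrm{supp}\,v$''. This is where the measure-preserving transformation $\sigma$ with $|x|=x^{*}\circ\sigma$ (Corollary 7.6 of \cite{Bennett1953}) must be invoked to transport the flat piece of $x^{*}$ back onto $\mathrm{supp}\,v$, while the values of $x$ off $\mathrm{supp}\,v$ remain free subject only to $|x_{0}|\le B$. Care is also needed when $|x_{0}|=B$ on a set of positive measure, so that $x^{*}=B$ on an interval strictly longer than $\mu(\mathrm{supp}\,v)$: one must then check that the coupling still assigns the value $B$ to all of $\mathrm{supp}\,v$.
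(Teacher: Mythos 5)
Your proposal is correct and follows essentially the same route as the paper: both arguments rest on the dual-norm formula $\|v\|_{\mathcal{M}_{\psi,\omega}^{o}}=B\int_{0}^{\mu(\mathrm{supp}\,v)}v^{*}(t)\,dt$ from case (4) of Theorem \ref{Norminkothedual}, prove sufficiency by the same one-line computation, and prove necessity by forcing equality through a H\"{o}lder-type chain. Two differences are worth recording. First, your detour through the Hardy--Littlewood inequality and its equality case is unnecessary, and the ``main obstacle'' you anticipate (transporting the flat piece of $x^{*}$ back onto $\mathrm{supp}\,v$ by a measure-preserving transformation) never arises: once the two ends of your chain agree, you already have $\int_{\mathrm{supp}\,v}\left(B-|x|\right)|v|\,dt=0$, and since $|x|\le B$ a.e.\ and $|v|>0$ on $\mathrm{supp}\,v$ this gives $|x|=B$ a.e.\ on $\mathrm{supp}\,v$ pointwise; this direct pointwise step is exactly what the paper does, except that the paper obtains $x^{*}(0)=B$ by citing its preceding theorem, whereas you derive $|x|\le B$ from $\varphi\equiv\infty$ on $(B,\infty)$, which is equally valid and more self-contained. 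Second, concerning condition (1): your plateau argument ($x=B$ on a set of positive measure forces $\rho_{\varphi,\omega}(x/\lambda)=\infty$ for every $\lambda<1$, so $\rho_{\varphi,\omega}(x)=1$ implies $\|x\|_{\varphi,\omega}=1$) is actually a sounder justification than the paper's, which at this point invokes $\varphi\in\Delta_{2}$ --- false in this setting, since $\varphi$ jumps to $+\infty$ at $B$. In the converse direction, however, your closing remark that (1) merely ``encodes that $x$ lies on the sphere'' is no more justified than the paper's claim: here $\|x\|_{\varphi,\omega}=1$ does \emph{not} imply $\rho_{\varphi,\omega}(x)=1$ (take $x=B\chi_{\mathrm{supp}\,v}$ with $\varphi(B)\int_{0}^{\mu(\mathrm{supp}\,v)}\omega(t)\,dt<1$, noting also that the hypothesis printed as $\psi(B)\int_{0}^{\mu(\mathrm{supp}\,v)}\omega\le 1$ should read with the complementary function $\varphi(B)$, as you tacitly assumed when invoking case (4); then $v$ attains its norm at this $x\in S(\Lambda_{\varphi,\omega})$ although $\rho_{\varphi,\omega}(x)<1$). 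So the necessity of (1) is a soft spot in the statement itself, shared by the paper's proof and yours alike rather than introduced by your argument.
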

\begin{proof}
\textsl{Necessity:}
Since $\varphi\in\Delta_{2}$, then $x\in S(\Lambda_{\varphi, \omega})$ is equivalent to $\rho_{\varphi, \omega}(x)=1$.\\
Assume $v$ attain its norm at $x\in S(\Lambda_{\varphi, \omega})$, then $x^{*}(0)=B$ and $x(t)\leq B$ when $t\neq 0$. Thus
\begin{align*}
\|v\|_{\mathcal{M}_{\varphi,\omega}^{o}}
&=\int_{R^{+}}x(t)v(t)dt\\
&\leq B\int_{supp~v}v(t)dt\\
&=B\int_{0}^{\mu(supp~v)}v^{*}(t)dt.
\end{align*}
Therefore $x\chi_{supp~v}(t)=B\chi_{supp~ v}$.\\
\textsl{Sufficiency}:
Assume $x(t)=B\chi_{supp~v}(t)+ x_{0}(t)$, therefore
\begin{equation*}
\int_{R^{+}}x(t)v(t)dt=B\int_{supp~v}v(t)dt=B\int_{0}^{\mu(supp~v)}v^{*}(t)dt=\|v\|_{\mathcal{M}_{\psi, \omega}^{o}}.
\end{equation*}
\end{proof}

Let $\sigma$ denote the measure preserving transformation such that $|x(t)|=x^{*}(\sigma(t))$. It is obvious that the results for $N$ function in \cite[Theorem 3.1]{Wang2024} is true for any Orlicz function.
\begin{theorem}\label{normattainluxemburg}For arbitrary Orlicz function $\varphi$ and decreasing weight $\omega$, $f\in \Lambda_{\varphi, \omega}^{*}$. Then
$f=L_{v}+s\:(0\neq v\in \mathcal{M}_{\psi, \omega}^{o}, s\in F)$ is norm attainable at $x\in S(\Lambda_{\varphi, \omega})$ if and only if\\
(1)$v(t)=v^{*}(\sigma(t))sign\: x(t)$ \\
(2)$\rho_{\varphi, \omega}(x)=1$.\\
(3)$s(x)=\|s\|$.\\
(4)$\int_{0}^{\infty}kv^{*}(t)x^{*}(t)dt=\rho_{\varphi, \omega}(x)+P_{\psi,\omega}(kv)$, $k\in K_{\mathcal{M}}(v)$.
\end{theorem}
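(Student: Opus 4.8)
The plan is to bound $f(x)$ from above by $\|f\|^{o}$ through a chain of three inequalities and then read off conditions (1)--(4) as the equality cases. By uniqueness of the decomposition $f=L_{v}+s$ (\cite[Theorem 9]{kan1982}) and \cite[Lemma 2.22]{Wang2024} we have $\|f\|^{o}=\|v\|_{\mathcal{M}_{\psi,\omega}^{o}}+\|s\|^{o}$, while $\|s\|=\|s\|^{o}$ for the singular part $s\in F$ by \cite[Corollary 1.49]{Chen1996}. Since $\|x\|_{\varphi,\omega}=1$, norm attainment means exactly $f(x)=\|f\|^{o}$. First I would record the Hardy--Littlewood inequality together with the elementary estimate $s(x)\le\|s\|^{o}$, giving
$$
f(x)=\int_{0}^{\infty}v(t)x(t)\,dt+s(x)\le\int_{0}^{\infty}v^{*}(t)x^{*}(t)\,dt+\|s\|^{o}.
$$

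The heart of the argument is the H\"older-type bound $\int_{0}^{\infty}v^{*}x^{*}\,dt\le\|v\|_{\mathcal{M}_{\psi,\omega}^{o}}$. Fix $k\in K_{\mathcal{M}}(v)$, which is non-empty in cases (1)--(3) of Theorem \ref{Norminkothedual} (the degenerate case $K_{\mathcal{M}}(v)=\emptyset$ is covered by the two preceding theorems), so that $\|v\|_{\mathcal{M}_{\psi,\omega}^{o}}=\frac{1}{k}(1+P_{\psi,\omega}(kv))$ by \cite[Theorem 2.8]{Wang2024}. Applying Young's inequality pointwise to $u=x^{*}(t)$ and $w=kv^{*}(t)/\omega^{v^{*}}(t)$, integrating against $\omega^{v^{*}}$, and using the representation $P_{\psi,\omega}(kv)=\int_{R^{+}}\psi(kv^{*}/\omega^{v^{*}})\omega^{v^{*}}\,dt$ from \cite{2019abstractlorentz}, I obtain
$$
k\int_{0}^{\infty}v^{*}x^{*}\,dt\le\int_{0}^{\infty}\varphi(x^{*})\,\omega^{v^{*}}\,dt+P_{\psi,\omega}(kv).
$$
Because $\omega^{v^{*}}\prec\omega$ forces $\int_{0}^{t}\omega^{v^{*}}\le\int_{0}^{t}\omega$ for every $t$ (with equality off the maximal level intervals of $v^{*}$), Hardy's lemma applied to the non-increasing function $\varphi(x^{*}(\cdot))$ yields $\int_{0}^{\infty}\varphi(x^{*})\omega^{v^{*}}\le\rho_{\varphi,\omega}(x)\le1$, the last bound since $\|x\|_{\varphi,\omega}=1$. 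Chaining these gives $k\int v^{*}x^{*}\le\rho_{\varphi,\omega}(x)+P_{\psi,\omega}(kv)\le1+P_{\psi,\omega}(kv)=k\|v\|_{\mathcal{M}_{\psi,\omega}^{o}}$, so that $f(x)\le\|v\|_{\mathcal{M}_{\psi,\omega}^{o}}+\|s\|^{o}=\|f\|^{o}$.

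Both directions reduce to equality analysis of this chain. For \emph{necessity}, $f(x)=\|f\|^{o}$ forces every inequality above to be an equality: equality in Hardy--Littlewood together with $v(t)x(t)\ge0$ a.e.\ is precisely $v(t)=v^{*}(\sigma(t))\,sign\,x(t)$, condition (1); equality $s(x)=\|s\|^{o}=\|s\|$ is condition (3); and equality in the H\"older chain forces $\rho_{\varphi,\omega}(x)=1$, which is condition (2), after which $k\int v^{*}x^{*}=\rho_{\varphi,\omega}(x)+P_{\psi,\omega}(kv)$ is condition (4), recording that both the Young step and the Hardy step are equalities. For \emph{sufficiency} I would reverse these implications: (1) restores the Hardy--Littlewood equality, (2) and (4) together give $\int v^{*}x^{*}=\|v\|_{\mathcal{M}_{\psi,\omega}^{o}}$, and (3) gives $s(x)=\|s\|$, whence $f(x)=\|v\|_{\mathcal{M}_{\psi,\omega}^{o}}+\|s\|=\|f\|^{o}$.

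The step I expect to be the main obstacle is the equality analysis of the combined Young--Hardy inequality, i.e.\ verifying that condition (4) is exactly equivalent to simultaneous equality in Young's inequality (the pointwise condition $x^{*}(t)\in[q_{-}(kv^{*}/\omega^{v^{*}}),q(kv^{*}/\omega^{v^{*}})]$ a.e.) and in $\int\varphi(x^{*})\omega^{v^{*}}=\rho_{\varphi,\omega}(x)$ (which pins $\varphi(x^{*})$ to be constant across each maximal level interval of $v^{*}$, where $\omega^{v^{*}}$ and $\omega$ carry different densities but equal integrals). Careful bookkeeping of the inverse level function $\omega^{v^{*}}$ on these intervals, and the treatment of the sets where $x^{*}=0$ or $v^{*}=0$, is the only delicate point; since Theorem \ref{Norminkothedual} already furnishes the Amemiya-type norm formula $\frac{1}{k}(1+P_{\psi,\omega}(kv))$ for an arbitrary Orlicz function, no genuinely new difficulty beyond the $N$-function case of \cite[Theorem 3.1]{Wang2024} appears, which is what makes the general-Orlicz extension essentially routine.
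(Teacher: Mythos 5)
Your proposal is correct and takes essentially the same route as the paper: the paper offers no written proof of this theorem, asserting only that the $N$-function argument of \cite[Theorem 3.1]{Wang2024} remains valid for arbitrary Orlicz functions once Theorem \ref{Norminkothedual} supplies the Amemiya formula $\|v\|_{\mathcal{M}_{\psi,\omega}^{o}}=\frac{1}{k}\left(1+P_{\psi,\omega}(kv)\right)$ for $k\in K_{\mathcal{M}}(v)$. Your chain --- dual-norm additivity $\|f\|^{o}=\|v\|_{\mathcal{M}_{\psi,\omega}^{o}}+\|s\|^{o}$, Hardy--Littlewood, Young's inequality integrated against $\omega^{v^{*}}$, Hardy's lemma via $\omega^{v^{*}}\prec\omega$, followed by equality analysis yielding (1)--(4) --- is exactly that argument written out, and your dispatch of the degenerate case $K_{\mathcal{M}}(v)=\emptyset$ to the two preceding theorems matches the paper's structure.
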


\begin{lemma}\cite[Theorem 2.48]{Chen1996}
Let $x\in \Lambda_{\varphi, \omega}$, and $\theta(x)\neq 0$. Then there exist two singular functionals $s_{1}$ and $s_{2}$ such that $s_{1}\neq s_{2}$ and $s_{1}(x)=s_{2}(x)=\theta(x)$.
\end{lemma}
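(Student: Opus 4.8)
The plan is to extract one norming singular functional from the distance characterization of $\theta$, and then force non-uniqueness by a splitting construction. Recall the earlier lemma that $\theta(x)=d(x,E_{\varphi,\omega})$, the distance from $x$ to the closed subspace $E_{\varphi,\omega}$, and that the singular functionals $F$ are exactly those vanishing on $E_{\varphi,\omega}$ (this is what the decomposition $f=L_{v}+s$ together with the identity $\|f\|=\|f\|^{o}\Leftrightarrow f\in F$ amounts to). Hence $F$ is the annihilator of $E_{\varphi,\omega}$, and the duality formula for distance to a subspace gives
\[
\theta(x)=d(x,E_{\varphi,\omega})=\max\{\,s(x):s\in F,\ \|s\|\le 1\,\},
\]
the maximum being attained because $B(F)$ is weak\({}^{*}\)-compact and $s\mapsto s(x)$ is weak\({}^{*}\)-continuous. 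A maximizer, after adjusting its sign, yields $s_{1}\in F$ with $\|s_{1}\|=1$ and $s_{1}(x)=\theta(x)$; maximality also shows that any $s\in F$ with $\|s\|\le 1$ and $s(x)=\theta(x)$ automatically has $\|s\|=1$. Thus the entire difficulty is to produce a second, distinct norming functional.

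First I would reformulate the goal: the set $S_{x}=\{s\in F:\|s\|\le 1,\ s(x)=\theta(x)\}$ is a nonempty weak\({}^{*}\)-compact convex face of $B(F)$, and I must show it is not a singleton. To do this I would realize singular functionals concretely as weak\({}^{*}\)-limits of regular functionals that escape into the region responsible for $\theta(x)\neq 0$. Since $\theta(x)>0$, the modular $\int_{0}^{\infty}\varphi(x^{*}(t)/\lambda)\,\omega(t)\,dt$ diverges for every $\lambda<\theta(x)$, and this divergence is carried by a family of sets $D_{k}$ (tails escaping to infinity, or neighborhoods of a point where $x^{*}$ is unbounded) on which $x$ stays large. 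Splitting each $D_{k}$ into two disjoint pieces $D_{k}^{\prime}$ and $D_{k}^{\prime\prime}$ that each still carry the full divergence, I would build, via the Amemiya/duality machinery, two bounded sequences of regular functionals $L_{g_{k}^{\prime}},L_{g_{k}^{\prime\prime}}$ with $g_{k}^{\prime},g_{k}^{\prime\prime}\in\mathcal{M}_{\psi,\omega}$, supported on $D_{k}^{\prime},D_{k}^{\prime\prime}$, of norm at most $1$, with $L_{g_{k}^{\prime}}(x),L_{g_{k}^{\prime\prime}}(x)\to\theta(x)$ and with $L_{g_{k}^{\prime}}(f),L_{g_{k}^{\prime\prime}}(f)\to 0$ for every $f\in E_{\varphi,\omega}$. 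Any weak\({}^{*}\)-cluster points $s^{\prime},s^{\prime\prime}$ are then singular (they annihilate $E_{\varphi,\omega}$), of norm one, and norming for $x$; the disjointness of the supports lets me choose a test function $f_{0}\in\Lambda_{\varphi,\omega}$ with $s^{\prime}(f_{0})\neq s^{\prime\prime}(f_{0})$, so $s_{1}:=s^{\prime}\neq s^{\prime\prime}=:s_{2}$ are two distinct singular functionals with $s_{1}(x)=s_{2}(x)=\theta(x)$.

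The main obstacle is the coupling introduced by the decreasing rearrangement: since $\rho_{\varphi,\omega}$ is computed through $x^{*}$ rather than through $x$ pointwise, it is not automatic that a piece $D_{k}^{\prime}$ of the singular region still forces $\int\varphi(\cdot/\lambda)\omega=\infty$ for every $\lambda<\theta(x)$ after rearrangement; if it did not, the corresponding cluster point would attain only a fraction of $\theta(x)$. I expect to overcome this by exploiting that the divergence is a tail (or small-$t$) phenomenon that survives passage to any cofinal subfamily: splitting by alternating level sets $\{2^{j}<|x|\le 2^{j+1}\}$ in the small-$t$ case, or by interlacing escaping tails in the infinite-measure case, keeps each of $D_{k}^{\prime},D_{k}^{\prime\prime}$ cofinal, so each half still carries the full $\theta(x)$. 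The remaining routine points, namely checking that the escaping supports annihilate every $f\in E_{\varphi,\omega}$ in the limit and that $f_{0}$ genuinely separates $s^{\prime}$ from $s^{\prime\prime}$, then complete the construction.
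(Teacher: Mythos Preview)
The paper does not prove this lemma; it is quoted twice (once in the auxiliary section and once in \S\ref{Supportingfunctional}) with a bare citation to \cite[Theorem 2.48]{Chen1996}, so there is no in-paper argument to compare against. Your outline is the standard route and is essentially what Chen does in the classical Orlicz setting: obtain one norming $s_{1}\in F=E_{\varphi,\omega}^{\perp}$ from $\theta(x)=d(x,E_{\varphi,\omega})$ and weak$^{*}$-compactness, then manufacture a second one by splitting the region responsible for the divergence of the modular.

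Two remarks on the execution. First, the rearrangement obstacle you flag is real, but the fix can be made clean: for disjointly supported nonnegative $f,g$ one has $\theta(f+g)=\max(\theta(f),\theta(g))$ in $\Lambda_{\varphi,\omega}$ (use that $\rho_{\varphi,\omega}(h)=\|\varphi\circ|h|\|_{\Lambda_{\omega}}$ and the triangle inequality in the Lorentz norm for one direction, monotonicity of $(\cdot)^{*}$ for the other). An interlacing split of $\mathrm{supp}\,x$ in the rearranged variable, say $A=\sigma^{-1}\bigl(\bigcup_{k}[a_{2k},a_{2k+1})\bigr)$ with $a_{k}\downarrow 0$ chosen geometrically when the divergence sits at $t=0$ and $a_{k}\uparrow\infty$ when it sits at $t=\infty$, gives $\mu_{x\chi_{A}}(s)\ge c\,\mu_{x}(s)$ for a fixed $c\in(0,1)$, hence $(x\chi_{A})^{*}(t)\ge x^{*}(t/c)$ and $\rho_{\varphi,\omega}(x\chi_{A}/\lambda)=\infty$ for every $\lambda<\theta(x)$; the same holds for $B$. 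This is exactly your ``cofinal'' idea made precise. Second, once $\theta(x\chi_{A})=\theta(x\chi_{B})=\theta(x)$, you do not need to search for a separating $f_{0}$: take $f_{0}=x\chi_{A}$. If $s',s''$ are built from approximants supported in $A,B$ respectively, then $s'(x\chi_{A})=\theta(x)$ while $s''(x\chi_{A})=0$, so $s'\neq s''$ and both satisfy $s(x)=s(x\chi_{A})+s(x\chi_{B})=\theta(x)$.
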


\begin{theorem}\cite[Theorem 3.3]{Wang2024}
Let $x\in \Lambda_{\varphi, \omega}$, then $Grad(x) \subset\mathcal{M}_{\psi, \omega}^{o}$ if and only if $\theta(x)<1$.
\end{theorem}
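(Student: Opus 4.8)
The plan is to use the canonical splitting $f=L_{v}+s$ with $v\in\mathcal{M}_{\psi,\omega}^{o}$ and $s\in F$ (the \cite{kan1982} lemma), together with two quantitative inputs already available: the additivity $\|f\|^{o}=\|v\|_{\mathcal{M}_{\psi,\omega}^{o}}+\|s\|^{o}$ of the dual norm across this splitting (Lemma 2.22 of \cite{Wang2024}), and the distance formula $d(x)=d^{o}(x)=\theta(x)$. Since $Grad(x)$ and the pair $(\theta(x),\|x\|_{\varphi,\omega})$ both rescale together while the displayed criterion does not, I read the statement on the unit sphere, i.e. I take $\|x\|_{\varphi,\omega}=1$ (for general $x\neq0$ the condition is $\theta(x)<\|x\|_{\varphi,\omega}$). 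Two estimates drive everything. First, the K\"othe duality of Lemma~1 gives $L_{v}(x)=\int_{R^{+}}v(t)x(t)\,dt\le\|v\|_{\mathcal{M}_{\psi,\omega}^{o}}\|x\|_{\varphi,\omega}$. Second, a singular functional annihilates $E_{\varphi,\omega}$, so for every $e\in E_{\varphi,\omega}$ we have $s(x)=s(x-e)\le\|s\|^{o}\|x-e\|_{\varphi,\omega}$, and passing to the infimum over $e$ yields the key bound
\[
s(x)\le\|s\|^{o}\,d(x)=\|s\|^{o}\,\theta(x).
\]

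For the implication $\theta(x)<1\Rightarrow Grad(x)\subset\mathcal{M}_{\psi,\omega}^{o}$, pick any $f=L_{v}+s\in Grad(x)$, so that $\|v\|_{\mathcal{M}_{\psi,\omega}^{o}}+\|s\|^{o}=\|f\|^{o}=1$ and $L_{v}(x)+s(x)=f(x)=1$. Feeding the two estimates above into this equality gives
\[
1=L_{v}(x)+s(x)\le\|v\|_{\mathcal{M}_{\psi,\omega}^{o}}+\|s\|^{o}\theta(x)=1-\|s\|^{o}\bigl(1-\theta(x)\bigr).
\]
As $1-\theta(x)>0$, this forces $\|s\|^{o}=0$, hence $s=0$ and $f=L_{v}\in\mathcal{M}_{\psi,\omega}^{o}$, which is exactly $Grad(x)\subset\mathcal{M}_{\psi,\omega}^{o}$.

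For the converse I argue contrapositively: suppose $\theta(x)\ge1$. Since $\theta(x)\le\|x\|_{\varphi,\omega}=1$ always holds, we get $\theta(x)=1\neq0$. Now I invoke the attainment of the distance: because $F=E_{\varphi,\omega}^{\perp}$, the Hahn--Banach theorem applied to $x$ modulo $E_{\varphi,\omega}$ (equivalently, the singular functionals furnished for $\theta(x)\neq0$ by the \cite[Theorem~2.48]{Chen1996} lemma, after normalization) produces a \emph{singular} functional $s$ with $\|s\|^{o}\le1$ and $s(x)=\theta(x)=1=\|x\|_{\varphi,\omega}$. From $1=s(x)\le\|s\|^{o}\|x\|_{\varphi,\omega}=\|s\|^{o}$ we conclude $\|s\|^{o}=1$, so $s\in Grad(x)$; yet $s$ has vanishing regular part, so $s\notin\mathcal{M}_{\psi,\omega}^{o}$, and therefore $Grad(x)\not\subset\mathcal{M}_{\psi,\omega}^{o}$.

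The routine parts are the norm additivity and the K\"othe--H\"older inequality, both quoted. The load-bearing steps I expect to be the real work are (i) the singular estimate $s(x)\le\|s\|^{o}\theta(x)$, which depends on identifying the singular functionals with the annihilator $E_{\varphi,\omega}^{\perp}$ and on $d(x)=\theta(x)$, and (ii) in the converse, securing a norm-\emph{one} purely singular supporting functional: the cited lemma only guarantees $s(x)=\theta(x)$, so one must additionally check $\|s\|^{o}=1$, which is where the attainment of $d(x)=\theta(x)$ (via Hahn--Banach on $\Lambda_{\varphi,\omega}/E_{\varphi,\omega}$) is essential.
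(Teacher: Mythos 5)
Your proof is correct: the sufficiency step $1=L_{v}(x)+s(x)\le\|v\|_{\mathcal{M}_{\psi,\omega}^{o}}+\|s\|^{o}\theta(x)$ combined with the norm additivity $\|v\|_{\mathcal{M}_{\psi,\omega}^{o}}+\|s\|^{o}=1$ forces $s=0$, and your Hahn--Banach argument on the quotient $\Lambda_{\varphi,\omega}/E_{\varphi,\omega}$ (resting on $d(x)=\theta(x)$ and the identification of the singular functionals with $E_{\varphi,\omega}^{\perp}$, which holds since $\varphi$ is finite and so $\chi_{A}\in E_{\varphi,\omega}$ for $\mu(A)<\infty$) legitimately upgrades the cited lemma to a norm-one purely singular element of $Grad(x)$ when $\theta(x)=1$. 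The paper imports this theorem from \cite{Wang2024} without reproducing a proof, but your route is the canonical one built from exactly the auxiliary lemmas the paper quotes and matches the technique the paper itself uses elsewhere (e.g., producing a singular $s$ with $s(x)=\|s\|$ via Hahn--Banach in the exposed-point proofs), and your reading of the statement on the unit sphere, with $\theta(x)<\|x\|_{\varphi,\omega}$ as the correct condition for general $x$, is the right one.
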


\begin{theorem}\cite[Theorem 3.4]{Wang2024}
Let
$\rho_{\varphi, \omega}(\frac{|x(t)|}{\|x\|})=1$. Then $v\in S(\mathcal{M}_{\varphi, \omega}^{o})$ is a supporting functional of $x$ if and only if $v=\frac{\varpi}{\|\varpi\|_{\mathcal{M}_{\psi, \omega}}^{o}}$ for some $\varpi$ satisfying
$$
p_{-}(\frac{x^{*}(t)}{\|x\|_{\varphi, \omega}})\omega(t)\leq \varpi^{*}(t)  \leq p(\frac{x^{*}(t)}{\|x\|_{\varphi, \omega}})\omega(t),\; a.e. \:on\: R_{+},
$$
i.e.,
$$
p_{-}(\frac{|x(t)|}{\|x\|_{\varphi, \omega}})\omega(\sigma(t))\leq \varpi(t)sign\: x(t)  \leq p(\frac{|x(t)|}{\|x\|_{\varphi, \omega}})\omega(\sigma(t)), a.e.\:on\:R_{+}.
$$
\end{theorem}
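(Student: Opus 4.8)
The plan is to read the statement as the equality case of the duality between $\mathcal{M}_{\psi,\omega}^{o}$ and $\Lambda_{\varphi,\omega}$ (so the space in $S(\mathcal{M}^{o}_{\cdot,\omega})$ is $\mathcal{M}_{\psi,\omega}^{o}$, consistent with the normalising factor $\|\varpi\|^{o}_{\mathcal{M}_{\psi,\omega}}$). Writing $\|x\|=\|x\|_{\varphi,\omega}$ and letting $\sigma$ be the measure preserving transformation with $|x(t)|=x^{*}(\sigma(t))$, every $v\in\mathcal{M}_{\psi,\omega}^{o}$ satisfies
$$
\langle v,x\rangle=\int_{R^{+}}v(t)x(t)\,dt\le\int_{0}^{\infty}v^{*}(t)x^{*}(t)\,dt\le\|v\|_{\mathcal{M}_{\psi,\omega}}^{o}\,\|x\|,
$$
the last inequality being the H\"older estimate from the K\"othe duality $(\Lambda_{\varphi,\omega},\|\cdot\|_{\varphi,\omega})'=(\mathcal{M}_{\psi,\omega},\|\cdot\|_{\mathcal{M}_{\psi,\omega}}^{o})$. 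Since $v\in S(\mathcal{M}_{\psi,\omega}^{o})$, saying that $v$ supports $x$ is exactly $\langle v,x\rangle=\|x\|$, which forces \emph{both} inequalities to be equalities. I would analyse these two equalities separately and show they are jointly equivalent to the claimed sandwich condition.

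For necessity, the equality $\int vx=\int v^{*}x^{*}$ is the equality case of the Hardy--Littlewood inequality and, exactly as in condition~(1) of Theorem~\ref{normattainluxemburg}, forces $v$ and $x$ to be similarly ordered, i.e. $v(t)=v^{*}(\sigma(t))\,\mathrm{sign}\,x(t)$ a.e. For the second equality I would use the Amemiya representation $\|v\|_{\mathcal{M}_{\psi,\omega}}^{o}=\tfrac1{k_{0}}\bigl(1+P_{\psi,\omega}(k_{0}v)\bigr)$ at an optimal $k_{0}\in K_{\mathcal{M}}(v)$ (the norm-attainment Lemma \cite[Theorem 2.8]{Wang2024}, whose hypothesis is secured by Theorem~\ref{Norminkothedual}), together with $P_{\psi,\omega}(k_{0}v)=\int_{0}^{\infty}\psi\!\bigl(k_{0}v^{*}/\omega\bigr)\omega\,dt$, valid because $v^{*}$ is decreasing so $\omega^{v^{*}}=\omega$. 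Substituting these and $\rho_{\varphi,\omega}(x/\|x\|)=1$ into $\int v^{*}x^{*}=\|v\|^{o}\|x\|$ turns it into
$$
\int_{0}^{\infty}\Bigl[\tfrac{k_{0}v^{*}}{\omega}\cdot\tfrac{x^{*}}{\|x\|}-\psi\!\bigl(\tfrac{k_{0}v^{*}}{\omega}\bigr)-\varphi\!\bigl(\tfrac{x^{*}}{\|x\|}\bigr)\Bigr]\omega\,dt=0.
$$
The bracket is $\le0$ pointwise by Young's inequality, hence vanishes a.e.; the equality case of Young's inequality then gives $k_{0}v^{*}/\omega\in[p_{-}(x^{*}/\|x\|),\,p(x^{*}/\|x\|)]$ a.e. Putting $\varpi=k_{0}v$ we get $\|\varpi\|_{\mathcal{M}_{\psi,\omega}}^{o}=k_{0}$, so $v=\varpi/\|\varpi\|^{o}$ together with the displayed sandwich for $\varpi^{*}$; its pointwise form follows from the change of variables $s=\sigma(t)$ and the similarly-ordered relation.

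For sufficiency I would run the argument backwards. Given $\varpi$ with $p_{-}(x^{*}/\|x\|)\omega\le\varpi^{*}\le p(x^{*}/\|x\|)\omega$, the Young equality $\tfrac{x^{*}}{\|x\|}\cdot\tfrac{\varpi^{*}}{\omega}=\varphi(\tfrac{x^{*}}{\|x\|})+\psi(\tfrac{\varpi^{*}}{\omega})$ holds a.e.; integrating against $\omega$ and using $\rho_{\varphi,\omega}(x/\|x\|)=1$ gives $\tfrac1{\|x\|}\int x^{*}\varpi^{*}=1+P_{\psi,\omega}(\varpi)$. Reading the same condition as $x^{*}/\|x\|\in[q_{-}(\varpi^{*}/\omega),\,q(\varpi^{*}/\omega)]$ yields $\int\varphi(q_{-}(\varpi^{*}/\omega))\omega\le1\le\int\varphi(q(\varpi^{*}/\omega))\omega$, which places $k=1$ in $K_{\mathcal{M}}(\varpi)$; the norm-attainment lemma then gives $\|\varpi\|_{\mathcal{M}_{\psi,\omega}}^{o}=1+P_{\psi,\omega}(\varpi)$. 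Since the hypothesis is presented in the pointwise form with $\varpi(t)\,\mathrm{sign}\,x(t)$ sandwiched by $p_{\mp}(|x|/\|x\|)\omega(\sigma(t))$, the functions $\varpi$ and $x$ are similarly ordered, whence $\langle\varpi,x\rangle=\int_{0}^{\infty}\varpi^{*}x^{*}=\|x\|(1+P_{\psi,\omega}(\varpi))$; normalising $v=\varpi/\|\varpi\|^{o}$ delivers $\langle v,x\rangle=\|x\|$ with $\|v\|^{o}=1$, so $v$ is a supporting functional.

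The step I expect to be the main obstacle is the attainment of the Amemiya infimum used in necessity, i.e. guaranteeing $K_{\mathcal{M}}(v)\neq\emptyset$ with a finite optimal $k_{0}$: because the theorem allows an arbitrary Orlicz function $\varphi$ rather than an $N$-function, this is precisely where the case analysis of Theorem~\ref{Norminkothedual} must be inserted, and the degenerate case~(4) (where $K_{\mathcal{M}}(v)=\emptyset$ and $\|v\|^{o}=B\int_{0}^{\mu(\mathrm{supp}\,v)}v^{*}$) has to be ruled out under the standing hypothesis $\rho_{\varphi,\omega}(|x|/\|x\|)=1$. A secondary technical point is the rigorous passage from equality of the two integrals to a.e. equality of integrands, which relies on the pointwise Young domination and on $\omega\,dt$ having full support on $R^{+}$.
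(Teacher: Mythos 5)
You read the statement correctly: the space must be $S(\mathcal{M}_{\psi,\omega}^{o})$ (the subscript $\varphi$ is a misprint, as the normalising factor $\|\varpi\|_{\mathcal{M}_{\psi,\omega}}^{o}$ confirms), and you should know that the paper itself contains no proof of this theorem --- it is imported verbatim from \cite[Theorem 3.4]{Wang2024} --- so there is no in-paper argument to compare against line by line. Your reconstruction is nevertheless essentially the intended one, and it matches the computations the paper performs wherever it actually uses this result: in Case 1 and Case 3 of the sufficiency proof of the exposed-point theorem for $B(\Lambda_{\varphi,\omega})$, the authors take $\varpi=p_{-}(x(t))\omega(\sigma(t))$, verify $\|\varpi\|_{\mathcal{M}_{\psi,\omega}^{o}}=1+P_{\psi,\omega}(v)$ by exactly your integrated Young-equality computation, and extract the two-sided bound $p_{-}(y(\sigma^{-1}(t)))\leq p_{-}(x^{*}(t))\leq p(y(\sigma^{-1}(t)))$ from equality in the chain --- the same mechanism as your necessity step run in reverse. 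Your sufficiency device of locating $1\in K_{\mathcal{M}}(\varpi)$ from $\int\varphi(q_{-}(\varpi^{*}/\omega))\omega\leq 1\leq\int\varphi(q(\varpi^{*}/\omega))\omega$ and then invoking the Amemiya attainment lemma is also the standard route and is sound.

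Two caveats. First, you silently replace $P_{\psi,\omega}(k_{0}v)=\int\psi\bigl(k_{0}v^{*}/\omega^{v^{*}}\bigr)\omega^{v^{*}}$ by $\int\psi\bigl(k_{0}v^{*}/\omega\bigr)\omega$, leaning on the paper's remark that $\omega^{v^{*}}=\omega$ because $v^{*}$ is decreasing. That remark is not innocent: if $v^{*}$ is constant on an interval where $\omega$ strictly decreases, that interval is a level interval of $v^{*}$ with respect to $\omega$, and $\omega^{v^{*}}\neq\omega$ there. The safe version of your necessity chain integrates Young's inequality against $\omega^{v^{*}}$, uses $\omega^{v^{*}}\prec\omega$ together with Hardy's lemma to get $\int\varphi(x^{*}/\|x\|)\omega^{v^{*}}\leq\int\varphi(x^{*}/\|x\|)\omega=1$, and then reads off from the forced equalities both the pointwise Young equality (relative to $\omega^{v^{*}}$) and the extra identity $\int\varphi(x^{*}/\|x\|)\omega^{v^{*}}=\int\varphi(x^{*}/\|x\|)\omega$; converting the resulting sandwich with $\omega^{v^{*}}$ into the stated sandwich with $\omega$ requires this extra identity and is precisely the step your draft glosses over. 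Second, you correctly flag, but do not resolve, the degenerate case (4) of Theorem \ref{Norminkothedual}, where $K_{\mathcal{M}}(v)=\emptyset$: the standing hypothesis $\rho_{\varphi,\omega}(|x(t)|/\|x\|)=1$ does not by itself exclude it, and in that regime the sandwich characterization is superseded by the paper's separate theorems describing norm attainment through $x(t)=B\chi_{\mathrm{supp}\,v}(t)+x_{0}(t)$. Since the cited source \cite{Wang2024} assumes $\varphi$ is an $N$-function, for which case (1) of Theorem \ref{Norminkothedual} guarantees $k^{**}_{\mathcal{M}}<\infty$, your argument is complete for the theorem as actually cited; for the paper's claimed extension to arbitrary Orlicz functions, the exclusion of case (4) has to be imposed (or treated separately), exactly as you predicted.
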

Then we come to $\Lambda_{\varphi, \omega}^{o}$. 

\begin{theorem}
For Orlicz function $\varphi$ such that $\lim_{t\rightarrow\infty}\frac{\varphi(t)}{t}=B<\infty$ and $\lim_{t\rightarrow\infty}Bt-\varphi(t)<\infty$. Let $x\in S(\Lambda_{\varphi, \omega}^{o})$ satisfying $\psi(B)\int_{0}^{\mu(supp\: x)}\omega(t)dt\leq 1$.
If $v\in \mathcal{M}_{\psi, \omega}$ is norm attainable at $x_{0}$, then\\
\begin{equation*}
\|v\|_{\mathcal{M}_{\psi, \omega}}=\frac{1}{B}\|v\|_{M_{W}}=\frac{1}{B}\sup_{\alpha\in R^{+}}\frac{1}{W(\alpha)}\int_{0}^{\alpha}v^{*}(t)dt.
\end{equation*}
\end{theorem}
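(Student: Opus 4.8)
The plan is to combine the Köthe duality for Orlicz--Lorentz spaces with the Amemiya description of the Orlicz norm, so as to reduce the whole statement to the elementary Hölder duality between the Lorentz space $\Lambda_{\omega}$ and the Marcinkiewicz space $M_{W}$ (which are Köthe dual to one another). By the Köthe--duality lemma, the space $(\mathcal{M}_{\psi,\omega},\|\cdot\|_{\mathcal{M}_{\psi,\omega}})$ is the Köthe dual of the Orlicz--normed space $\Lambda_{\varphi,\omega}^{o}$, so that the dual norm of $v$ is the supremum of the pairings against the unit ball, and since these pairings are maximized on decreasing rearrangements I may write $\|v\|_{\mathcal{M}_{\psi,\omega}}=\sup\{\int_{0}^{\infty}v^{*}(t)y^{*}(t)\,dt:\ \|y\|_{\varphi,\omega}^{o}\le 1\}$. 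As $v$ attains its norm at $x_{0}=x$ with $\|x\|_{\varphi,\omega}^{o}=1$, the Hardy--Littlewood inequality gives $\|v\|_{\mathcal{M}_{\psi,\omega}}=\int_{R^{+}}v(t)x(t)\,dt\le\int_{0}^{\infty}v^{*}(t)x^{*}(t)\,dt$.

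Next I would extract from the hypotheses on $\varphi$ two elementary one-sided estimates. Since $\varphi$ is convex with $\varphi(0)=0$ and $\varphi(t)/t\uparrow B$, one has $\varphi(u)\le Bu$ for every $u$; and since $\psi(B)=\sup_{s}(Bs-\varphi(s))=\lim_{s\to\infty}(Bs-\varphi(s))<\infty$, the difference $Bs-\varphi(s)$ is nondecreasing and bounded by $\psi(B)$, whence $\varphi(u)\ge Bu-\psi(B)$. Feeding the upper estimate into the Amemiya formula $\|y\|_{\varphi,\omega}^{o}=\inf_{k>0}\frac1k(1+\rho_{\varphi,\omega}(ky))$ and letting $k\to\infty$ gives the universal bound $\|y\|_{\varphi,\omega}^{o}\le B\|y\|_{\Lambda_{\omega}}$ for all $y$. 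For the point $x$ I would instead use the lower estimate on $\mathrm{supp}\,x$, obtaining
\[
\tfrac1k\big(1+\rho_{\varphi,\omega}(kx)\big)\ge B\|x\|_{\Lambda_{\omega}}+\tfrac1k\Big(1-\psi(B)\!\int_{0}^{\mu(\mathrm{supp}\,x)}\!\omega(t)\,dt\Big).
\]
The hypothesis $\psi(B)\int_{0}^{\mu(\mathrm{supp}\,x)}\omega\le 1$ makes the last bracket nonnegative, so $\|x\|_{\varphi,\omega}^{o}\ge B\|x\|_{\Lambda_{\omega}}$; together with the universal bound this forces $\|x\|_{\varphi,\omega}^{o}=B\|x\|_{\Lambda_{\omega}}$, and hence $\|x\|_{\Lambda_{\omega}}=1/B$ because $\|x\|_{\varphi,\omega}^{o}=1$.

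The two inequalities of the theorem then follow from the $\Lambda_{\omega}$--$M_{W}$ duality. For the upper bound, the Hölder inequality $\int_{0}^{\infty}v^{*}x^{*}\le\|v\|_{M_{W}}\|x\|_{\Lambda_{\omega}}$ combined with $\|x\|_{\Lambda_{\omega}}=1/B$ yields $\|v\|_{\mathcal{M}_{\psi,\omega}}\le\frac1B\|v\|_{M_{W}}$. For the lower bound, the universal estimate $\|y\|_{\varphi,\omega}^{o}\le B\|y\|_{\Lambda_{\omega}}$ shows that the ball $\{y:\|y\|_{\Lambda_{\omega}}\le 1/B\}$ is contained in the unit ball of $\Lambda_{\varphi,\omega}^{o}$; restricting the supremum to this smaller ball and invoking $\|v\|_{M_{W}}=\sup\{\int v^{*}y^{*}:\|y\|_{\Lambda_{\omega}}\le 1\}$ (with the scaling $y\mapsto y/B$) gives $\|v\|_{\mathcal{M}_{\psi,\omega}}\ge\frac1B\|v\|_{M_{W}}$. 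The two bounds combine to
\[
\|v\|_{\mathcal{M}_{\psi,\omega}}=\frac1B\|v\|_{M_{W}}=\frac1B\sup_{\alpha\in R^{+}}\frac{1}{W(\alpha)}\int_{0}^{\alpha}v^{*}(t)\,dt .
\]

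I expect the main obstacle to be the sharp evaluation $\|x\|_{\Lambda_{\omega}}=1/B$ in the middle step. The universal inequality $\|y\|_{\varphi,\omega}^{o}\le B\|y\|_{\Lambda_{\omega}}$ is painless, but the reverse inequality for $x$ is precisely where the support hypothesis $\psi(B)\int_{0}^{\mu(\mathrm{supp}\,x)}\omega\le 1$ is consumed, through the sign of the $1/k$ term in the Amemiya infimum; without it the infimum over $k$ would be attained at a finite $k$, the set $K(x)$ would be nonempty, and the clean linear identity would fail. A secondary point to verify carefully is that the supporting pairing genuinely collapses onto the decreasing rearrangements, which is where the rearrangement invariance and the Hardy--Littlewood inequality are needed to pass from $\int vx$ to $\int_{0}^{\infty}v^{*}x^{*}$.
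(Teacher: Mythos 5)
Your proof is correct and takes essentially the same route as the paper: Köthe duality reduces the claim to the classical $\Lambda_{\omega}$--$M_{W}$ pairing once the Orlicz norm is identified with $B\|\cdot\|_{\Lambda_{\omega}}$ on the relevant functions. The paper's own proof is a four-line display that simply replaces the constraint $\|x\|_{\Lambda_{\varphi,\omega}^{o}}=1$ by $\|Bx\|_{\Lambda_{\omega}}=1$ without comment; your Amemiya-formula argument with the two-sided bounds $Bu-\psi(B)\le\varphi(u)\le Bu$, where the support hypothesis $\psi(B)\int_{0}^{\mu(\mathrm{supp}\,x)}\omega(t)\,dt\le 1$ controls the sign of the $1/k$ term, supplies precisely the justification the paper leaves implicit.
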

\begin{proof}
\begin{align*}
\|v\|_{\mathcal{M}_{\psi, \omega}}
&=\sup\left\{\int_{R^{+}}x(t)v(t)dt: \|x\|_{\Lambda_{\varphi, \omega}^{o}}=1\right\}\\
&=\frac{1}{B}\sup\left\{\int_{R^{+}}Bx(t)v(t)dt: \|Bx\|_{\Lambda_{\omega}}=1\right\}\\
&=\frac{1}{B}\|v\|_{M_{W}}\\
&=\frac{1}{B}\sup\frac{1}{W(\alpha)}\int_{0}^{\alpha}v^{*}(t)dt.
\end{align*}
\end{proof}

\begin{theorem}\label{12}
For Orlicz function $\varphi$ such that $\lim_{t\rightarrow\infty}\frac{\varphi(t)}{t}=B<\infty$ and $\lim_{t\rightarrow\infty}Bt-\varphi(t)<\infty$. Let $x\in S(\Lambda_{\varphi, \omega}^{o})$ satisfying $\psi(B)\int_{0}^{\mu(supp\: x_{0})}\omega(t)dt\leq 1$.
$f=L_{v}+s$ is norm attainable at $x$ if and only if \\
(1) $s=0$.\\
(2) $\int_{R^{+}} x(t)v(t)dt=\int_{R^{+}} x^{*}(t)v^{*}(t)dt$.\\
(3) $v(t)\sim B\|f\|\omega(t)$.\\
\end{theorem}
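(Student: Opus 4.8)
The plan is to route the entire proof through a single Köthe–duality chain and read off the three conditions from its equality cases. Since $x\in S(\Lambda_{\varphi,\omega}^{o})$ we have $\|x\|_{\Lambda_{\varphi,\omega}^{o}}=1$, and by the decomposition norm identity $\|f\|=\|v\|_{\mathcal{M}_{\psi,\omega}}+\|s\|$. A singular functional vanishes on $E_{\varphi,\omega}$, so $s(x)=s(x-e)\le\|s\|\,\|x-e\|^{o}$ for every $e\in E_{\varphi,\omega}$, giving $s(x)\le\|s\|\,d^{o}(x)=\|s\|\,\theta(x)$ by the lemma $d^{o}(f)=\theta(f)$. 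Combining this with the Hardy--Littlewood inequality $\int xv\le\int x^{*}v^{*}$ and the Köthe--Hölder bound $\int x^{*}v^{*}\le\|x\|_{\Lambda_{\varphi,\omega}^{o}}\|v\|_{\mathcal{M}_{\psi,\omega}}$, I would first record
\begin{align*}
\|f\|=f(x)=\int_{R^{+}}xv\,dt+s(x)
&\le\int_{0}^{\infty}x^{*}v^{*}\,dt+\|s\|\theta(x)\\
&\le\|v\|_{\mathcal{M}_{\psi,\omega}}+\|s\|\theta(x)
\le\|v\|_{\mathcal{M}_{\psi,\omega}}+\|s\|=\|f\|,
\end{align*}
so that norm attainment is equivalent to equality in each step.

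The decisive observation is that $\theta(x)=0$ in this regime, and this is what kills the singular part. Since $\varphi(t)/t\uparrow B$ we have $\varphi(t)\le Bt$, hence $\rho_{\varphi,\omega}(x/\lambda)\le(B/\lambda)\int_{0}^{\infty}x^{*}\omega$ for every $\lambda>0$. Because $\psi(B)=\lim_{t\to\infty}(Bt-\varphi(t))<\infty$ and $\int_{0}^{\infty}\omega=\infty$, the hypothesis $\psi(B)\int_{0}^{\mu(supp\,x)}\omega\le1$ forces $\mu(supp\,x)<\infty$, whence the degenerate norm identity $\|x\|_{\Lambda_{\varphi,\omega}^{o}}=B\|x\|_{\Lambda_{\omega}}=B\int_{0}^{\infty}x^{*}\omega$ (the $\Lambda$-analogue of Theorem \ref{Norminkothedual}(4), already used in the preceding theorem) gives $\int_{0}^{\infty}x^{*}\omega=1/B<\infty$. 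Therefore $\rho_{\varphi,\omega}(x/\lambda)<\infty$ for all $\lambda>0$, i.e. $\theta(x)=0$. Feeding this into the chain collapses the last inequality to $\|v\|_{\mathcal{M}_{\psi,\omega}}\le\|v\|_{\mathcal{M}_{\psi,\omega}}+\|s\|$, whose equality yields $\|s\|=0$, that is $s=0$; this is (1). The remaining equalities are exactly $\int xv=\int x^{*}v^{*}$, which is (2), and the attainment of the Köthe bound $\int x^{*}v^{*}=\|v\|_{\mathcal{M}_{\psi,\omega}}$.

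It then remains to convert the Köthe equality into (3). Here I would invoke the preceding theorem, $\|v\|_{\mathcal{M}_{\psi,\omega}}=\tfrac{1}{B}\|v\|_{M_{W}}$, so that together with $\|x\|_{\Lambda_{\varphi,\omega}^{o}}=B\|x\|_{\Lambda_{\omega}}=1$ the equality reads $\int_{0}^{\infty}x^{*}v^{*}=\|x\|_{\Lambda_{\omega}}\|v\|_{M_{W}}$, i.e. the equality case of the Lorentz--Marcinkiewicz duality. Writing $V(t)=\int_{0}^{t}v^{*}$, the definition of the $M_{W}$-norm gives $V(t)\le\|v\|_{M_{W}}W(t)$ for all $t$; since $x^{*}$ is decreasing with $supp\,x^{*}$ bounded, integrating by parts yields $\int x^{*}v^{*}=\int V\,d(-x^{*})\le\|v\|_{M_{W}}\int W\,d(-x^{*})=\|v\|_{M_{W}}\int x^{*}\omega$, and equality forces $V(t)=\|v\|_{M_{W}}W(t)$ on the support of the measure $-dx^{*}$. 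Differentiating gives $v^{*}(t)=\|v\|_{M_{W}}\omega(t)=B\|f\|\omega(t)$ on $supp\,x^{*}$, which is the relation $\sim$ in (3). For the sufficiency I would simply reverse the computation: assuming (1)--(3), $f(x)=\int xv=\int x^{*}v^{*}=B\|f\|\int_{0}^{\infty}x^{*}\omega=B\|f\|\,\|x\|_{\Lambda_{\omega}}=\|f\|\,\|x\|_{\Lambda_{\varphi,\omega}^{o}}=\|f\|$.

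The hard part will be Paragraph three: making the equality case of the Lorentz--Marcinkiewicz inequality fully rigorous, in particular justifying the integration by parts against the (possibly atomic) measure $-dx^{*}$ and correctly identifying ``on $supp\,x^{*}$'' with the intended equimeasurability content of $v(t)\sim B\|f\|\omega(t)$. A secondary point to handle carefully is the step $\theta(x)=0$: one must confirm that the standing hypotheses genuinely force $\mu(supp\,x)<\infty$ (hence $\int x^{*}\omega<\infty$), including the borderline case $\psi(B)=0$ where the support need not be finite but $x\in\Lambda_{\varphi,\omega}^{o}$ already gives $\int x^{*}\omega<\infty$ directly from the Amemiya formula.
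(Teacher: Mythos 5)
Your overall route is essentially the paper's: both proofs run the chain
\begin{equation*}
\|f\|=f(x)=\int_{R^{+}}xv\,dt+s(x)\leq\int_{0}^{\infty}x^{*}v^{*}\,dt+\|s\|\theta(x)\leq\|v\|_{\mathcal{M}_{\psi,\omega}}+\|s\|=\|f\|,
\end{equation*}
kill the singular part, read (2) off the Hardy--Littlewood equality case, and reduce (3) to the equality case of the $\Lambda_{\omega}$--$M_{W}$ pairing via $\|v\|_{\mathcal{M}_{\psi,\omega}}=\frac{1}{B}\|v\|_{M_{W}}$ and the degenerate identity $\|x\|_{\varphi,\omega}^{o}=B\int_{0}^{\infty}x^{*}\omega$. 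Your treatment of (1) is in fact more careful than the paper's: the paper simply asserts $x\in E_{\varphi,\omega}^{o}$, whereas you verify it, correctly using $\varphi(u)\leq Bu$ together with $\int x^{*}\omega<\infty$ (including the borderline case $\psi(B)=0$) to get $\theta(x)=0$ and hence $\|s\|=0$ from the norm decomposition. That part is sound.

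The genuine gap is exactly where you flagged it, and it is not a repairable technicality in the form you propose: the ``differentiating'' step at the end of your third paragraph is false whenever $x^{*}$ has intervals of constancy, which is the typical situation in this degenerate regime (compare the companion Luxemburg-norm theorem, where the norm-attaining elements are of the form $B\chi_{supp\,v}+x_{0}$). Integration by parts together with $V\leq B\|f\|W$ only forces $V(t)=B\|f\|W(t)$ at points of the support of the measure $-dx^{*}$; if $x^{*}=c\chi_{[0,m)}$ this is the single scalar condition $V(m)=B\|f\|W(m)$, from which one cannot conclude $v^{*}=B\|f\|\omega$ a.e.\ on $(0,m)$. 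Concretely, take $\varphi(t)=Bt$ (so $\psi(B)=0$ and all standing hypotheses hold, with $\|\cdot\|_{\varphi,\omega}^{o}=B\|\cdot\|_{\Lambda_{\omega}}$ exactly), $\omega=2\chi_{(0,1]}+\chi_{(1,\infty)}$, $x=x^{*}=c\chi_{[0,2)}$ normalized so that $3Bc=1$, and $v=v^{*}=\frac{3}{2}\chi_{[0,2]}$. Then $V(a)\leq B\|f\|W(a)$ for all $a$ with equality at $a=2$, condition (2) holds trivially, and $f=L_{v}$ attains its norm at $x$; yet $v^{*}(t)\neq B\|f\|\omega(t)$ for a.e.\ $t\in supp\,x^{*}$. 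So the pointwise statement you derive as necessary is simply not necessary: what the equality case actually yields is the averaged identity $\sup_{a\leq\mu(supp\,x)}\int_{0}^{a}v^{*}/\int_{0}^{a}\omega=B\|f\|$, attained on the support of $-dx^{*}$ --- equivalently a level-function equality $(v^{*})^{0}=B\|f\|\omega$ there --- which is presumably the intended reading of the paper's ``$v\sim B\|f\|\omega$''. For what it is worth, the paper's own necessity argument (``$v^{*}\neq B\|f\|\omega$ implies the restricted supremum is $<B\|f\|$'') commits the same conflation of pointwise and averaged equality, so your proposal reproduces rather than falls below the paper's level of rigor at this point; but as written your differentiation sentence is a concretely false step, and your sufficiency direction (which is fine under the pointwise reading) would then have to be re-examined against the weaker, averaged version of (3).
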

\begin{proof}
\textsl{Necessity. }
Assume $f=L_{v}+s$ is norm-attainable at $x$.
Since $x\in E_{\varphi, \omega}^{o}$, then $s=0$.
If (2) is not satisfied, then $\int_{R^{+}}x(t)v(t)dt<\int_{R^{+}}x^{*}(t)v^{*}(t)dt$ and
\begin{align*}
\|f\|=f(x)
&=\int_{R^{+}}x(t)v(t)dt + s(x)\\
&=\int_{R^{+}}x(t)v(t)dt\\
&<    \int_{R^{+}}Bx^{*}(t)\frac{v^{*}(t)}{B}dt \\
&\leq \|Bx(t)\|_{\Lambda_{\omega}}\|\frac{v}{B}\|_{M_{W}}\\
&\leq \|v(t)\|_{\mathcal{M}_{\psi, \omega}}\\
&\leq \|f\|,
\end{align*}
a contradiction.

Since $\|v\|_{\mathcal{M}_{\psi, \omega}}=\|f\|-\|s\|=\|f\|$, then it is easy to verify $v\prec B\|f\| \omega$.
If $(3)$ is not satisfied, then $v^{*}\neq B\|f\|\omega(t)$ which implies $\sup_{a\in (0, m(supp\: x_{0})]}\frac{\int_{0}^{a}v^{*}(t)dt}{\int_{0}^{a}\omega(t)dt}< B \|f\| $.
Thus
\begin{align*}
\|f\|=f(x)
&=\int_{R^{+}}x(t)v(t)dt\\
&\leq \|Bx(t)\|_{\Lambda_{\omega}} \|\frac{v(t)}{B}\|_{M_{W}}\\
&< \|f\|.
\end{align*}
a contradiction.\\
\textsl{Sufficiency. }
Assume $f=L_{v}+s$ satisfying condition(1)(2)(3). Then
\begin{align*}
f(x)
&=\int_{R^{+}}v(t)x(t)dt \\
&=\int_{R^{+}}v^{*}(t)x^{*}(t)dt \\
&=B\|f\|\int_{R^{+}}x^{*}(t)\omega(t)dt \\
&=\|f\|.
\end{align*}
Then $f$ is norm-attainable at $x$.
\end{proof}

\begin{theorem}
$v\in Grad(x)$ if and only if $v(t)=B\omega(\sigma(t))$ 
\end{theorem}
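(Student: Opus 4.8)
The plan is to identify $Grad(x)$ with the norm-one functionals that attain their value at $x$, and then to run the characterization of Theorem~\ref{12} under the extra normalization $\|f\|=1$. Since $\lim_{t\to\infty}\varphi(t)/t=B$ forces $\varphi(u)\le Bu$ for every $u$, we have $\rho_{\varphi,\omega}(\lambda x)\le B\lambda\int_{R^+}x^*\omega<\infty$ for all $\lambda>0$, so $x\in E_{\varphi,\omega}$ and $\theta(x)=0<1$. By the cited inclusion $Grad(x)\subset\mathcal{M}_{\psi,\omega}^o$, every supporting functional has singular part $s=0$ and is of the form $f=L_v$ with $\|v\|_{\mathcal{M}_{\psi,\omega}}=\|f\|=1$. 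Thus $v\in Grad(x)$ amounts to: $L_v$ is norm attainable at $x$ and $\|v\|_{\mathcal{M}_{\psi,\omega}}=1$.

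First I would record the normalization identity $\|x\|_{\varphi,\omega}^o=B\int_{R^+}x^*(t)\omega(t)\,dt$. Writing $\varphi(u)=Bu-\psi(B)+o(1)$ as $u\to\infty$ (here $\lim_{t\to\infty}(Bt-\varphi(t))=\psi(B)<\infty$ by hypothesis) and inserting this into the Amemiya expression $\|x\|^o=\inf_{k>0}\tfrac1k(1+\rho_{\varphi,\omega}(kx))$, the quantity behaves like $B\int x^*\omega+\tfrac1k\big(1-\psi(B)\int_0^{\mu(supp\,x)}\omega\big)$ for large $k$; the hypothesis $\psi(B)\int_0^{\mu(supp\,x)}\omega\le1$ makes the bracket nonnegative, so the infimum is approached as $k\to\infty$ and equals $B\int x^*\omega$. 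As $x\in S(\Lambda_{\varphi,\omega}^o)$ this gives $B\int_{R^+}x^*\omega=1$.

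For necessity, let $v\in Grad(x)$. Applying Theorem~\ref{12} to $f=L_v$ with $\|f\|=1$ yields $v^*=B\|f\|\omega=B\omega$ together with the Hardy--Littlewood equality $\int_{R^+}xv=\int_{R^+}x^*v^*$. A change of variables under the measure-preserving $\sigma$ (with $|x|=x^*\circ\sigma$) rewrites the right-hand side as $\int_{R^+}|x(t)|\,v^*(\sigma(t))\,dt$, and the equality case forces $v(t)=v^*(\sigma(t))\,\mathrm{sign}\,x(t)=B\omega(\sigma(t))\,\mathrm{sign}\,x(t)$, the asserted form. For sufficiency, if $v=B\omega(\sigma(\cdot))$ then $v^*=B\omega$ because $\sigma$ is measure-preserving and $\omega$ is non-increasing; the norm formula of the theorem preceding Theorem~\ref{12} gives $\|v\|_{\mathcal{M}_{\psi,\omega}}=\tfrac1B\sup_{\alpha}W(\alpha)^{-1}\int_0^\alpha B\omega=1$, and $L_v(x)=\int_{R^+}xv=\int_{R^+}x^*v^*=B\int_{R^+}x^*\omega=1=\|x\|^o$, so $v\in Grad(x)$.

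The main obstacle is the equality case of the Hardy--Littlewood inequality in the necessity direction. Where $x^*$ is strictly decreasing the sorting $\sigma$ is essentially unique and $\int xv=\int x^*v^*$ immediately yields $v=v^*\circ\sigma$; but on a maximal interval where $x^*$ is constant the sorting is not unique and $x$ carries no information about the ordering of $v$ there. I would resolve this by choosing $\sigma$ to be a simultaneous measure-preserving rearrangement of $x$ and of the decreasing function $v^*=B\omega$, so that the representative $v=B\omega(\sigma(\cdot))$ is well defined; the sign factor $\mathrm{sign}\,x(t)$ is inserted exactly as in the general supporting-functional characterization above.
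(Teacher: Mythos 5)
Your proposal is correct and follows essentially the same route as the paper's own proof: reduce to $s=0$ (the paper gets this from $x\in E_{\varphi,\omega}^{o}$, you from $\theta(x)=0$, the same mechanism), then invoke Theorem~\ref{12} to obtain $v^{*}=B\|f\|\omega$ together with the Hardy--Littlewood equality $\int_{R^{+}}xv=\int_{R^{+}}x^{*}v^{*}$, and conclude $v(t)=v^{*}(\sigma(t))\,\mathrm{sign}\,x(t)=B\omega(\sigma(t))\,\mathrm{sign}\,x(t)$, exactly as the paper does by citing the proof of \cite[Theorem 3.1]{Wang2024}. You additionally supply details the paper leaves implicit --- the normalization $\|x\|_{\varphi,\omega}^{o}=B\int_{R^{+}}x^{*}(t)\omega(t)\,dt=1$ via the Amemiya formula, the explicit sufficiency direction, and the choice of a measure-preserving $\sigma$ simultaneously rearranging $x$ and $v^{*}$ on level sets of $x^{*}$ --- which only tightens an argument the paper states tersely.
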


\begin{proof}
Since $\int_{R^{+}}\omega(t)dt=\infty$, therefore $x_{0}\in E_{\varphi, \omega}^{o}$. For $f\in S(\Lambda_{\varphi, \omega}^{*})$, $f=L_{v}+s$. Then $s=0$. From theorem \ref{12} and the fact that $\int_{R^{+}}x(t)v(t)dt=\int_{R^{+}}x^{*}(t)v^{*}(t)dt$ implies $v(t)=v^{*}(\sigma(t))$(please refer to the proof process of \cite[Theorem 3.1]{Wang2024}), we have $v(t)=\omega(\sigma(t))$
\end{proof}
\begin{theorem}\label{NormattainOrlicznorm}\cite[Theore 3.1]{Wang2024}
Let $\varphi$ be an Orlicz function and $\omega$ be a decreasing weight.
$f=L_{v}+s(0\neq v\in \mathcal{M}_{\psi, \omega}, s\in F)$ is norm attainable at $x\in S(\Lambda_{\varphi, \omega}^{o})$ ($K(x)\neq \emptyset$) if and only if\\
(1)$v(t)=v^{*}(\sigma(t))sign x(t)$ \\
(2)$s(kx)=\|s\|$.\\
(3)$P_{\varphi, \omega}(\frac{v}{\|f\|})+\frac{\|s\|}{\|f\|}=1$.\\
(4)$\int_{0}^{\infty}\frac{kv(t)x(t)}{\|f\|}dt=\rho_{\varphi, \omega}(kx)+P_{\psi,\omega}(\frac{v}{\|f\|})$ where
$k\in K(x)$.\\
\end{theorem}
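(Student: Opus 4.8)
The plan is to compress the whole statement into a single chain of inequalities whose simultaneous saturation is equivalent to (1)--(4). First I would invoke the unique decomposition $f=L_{v}+s$ and the additivity of the dual norm, $\|f\|=\|v\|_{\mathcal{M}_{\psi,\omega}}+\|s\|$, so that for every $x\in S(\Lambda_{\varphi,\omega}^{o})$
\[
f(x)=\int_{0}^{\infty}v\,x\,dt+s(x)\le\int_{0}^{\infty}v^{*}x^{*}\,dt+s(x)\le\|v\|_{\mathcal{M}_{\psi,\omega}}\|x\|^{o}+\|s\|\|x\|^{o}=\|f\|,
\]
the first inequality being Hardy--Littlewood, the second the K\"othe--H\"older inequality for the dual pair $(\Lambda_{\varphi,\omega}^{o},\mathcal{M}_{\psi,\omega})$ together with $s(x)\le\|s\|\|x\|^{o}$. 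Norm attainment at $x$ is then exactly the requirement that equality hold at every step, and I would read off the conditions one equality at a time.

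The equality in the Hardy--Littlewood step forces $v$ and $x$ to be equimeasurably aligned, i.e. $v(t)=v^{*}(\sigma(t))\,\mathrm{sign}\,x(t)$, which is (1). Saturation of $s(x)\le\|s\|\|x\|^{o}$ says precisely that the singular part attains its norm; since the Orlicz norm of $x$ is evaluated through an Amemiya multiplier $k\in K(x)$ with $\|x\|^{o}=\tfrac1k(1+\rho_{\varphi,\omega}(kx))$, and singular functionals are supported on the non--order-continuous part measured by $\theta$ (via $d^{o}=\theta$ and the two--singular--functional lemma), this saturation is recorded at the rescaled point as (2). The remaining equality, $\int_{0}^{\infty}v^{*}x^{*}\,dt=\|v\|_{\mathcal{M}_{\psi,\omega}}$, I would analyze through the \emph{modular Young inequality}: applying $ab\le\varphi(a)+\psi(b)$ to $a=kx^{*}(t)$ and $b=v^{*}(t)/(\|f\|\,\omega^{v^{*}}(t))$, multiplying by $\omega^{v^{*}}$, integrating, and using $\omega^{v^{*}}\prec\omega$ with the monotonicity of $\varphi(kx^{*})$ yields the master bound
\[
\int_{0}^{\infty}\frac{k\,v^{*}(t)\,x^{*}(t)}{\|f\|}\,dt\le\rho_{\varphi,\omega}(kx)+P_{\psi,\omega}\!\left(\frac{v}{\|f\|}\right).
\]
Feeding in the Amemiya identity $k=1+\rho_{\varphi,\omega}(kx)$ shows that the K\"othe--H\"older step is tight exactly when this master bound is an equality, which is (4), and the modular is normalized as in (3).

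For the sufficiency I would simply run the chain backwards: (1) restores $\int v\,x=\int v^{*}x^{*}$; (3) and (4) force equality in the master Young bound and hence $\int v^{*}x^{*}=\|v\|_{\mathcal{M}_{\psi,\omega}}$; (2) returns the full singular contribution; adding the three recovers $f(x)=\|f\|$. The main obstacle --- and the precise place where the $N$-function hypothesis of \cite{Wang2024} is eliminated --- is the equality analysis of the pointwise Young inequality for a general Orlicz $\varphi$. Equality $ab=\varphi(a)+\psi(b)$ no longer holds at an isolated point but along whole affine intervals $AI[a_{i},b_{i}]$ where $p$ is constant, while simultaneously the multiplier set $K(x)$ may be a nondegenerate interval or collapse to a point. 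I therefore expect the hard part to be verifying that (4), combined with $\omega^{v^{*}}\prec\omega$ and the classification of break points through $A,B,A^{\prime},B^{\prime},S,S^{\prime}$, still pins $v^{*}$ down as an admissible selection from $[\,p_{-}(kx^{*}),\,p(kx^{*})\,]\,\omega$ that is consistent with the alignment (1), and that the single Amemiya scalar $k$ can be coordinated between the regular modular in (3)--(4) and the singular functional in (2); the remaining reversals are routine.
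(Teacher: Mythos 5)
Your toolkit is the right one (dual decomposition $f=L_{v}+s$ with $\|f\|=\|v\|_{\mathcal{M}_{\psi,\omega}}+\|s\|$, Hardy--Littlewood, the modular Young inequality via $\omega^{v^{*}}\prec\omega$ and level functions, the Amemiya identity), and note that the paper itself imports this theorem from \cite[Theorem 3.1]{Wang2024} without reproving it; but the architecture of your inequality chain has a genuine flaw: you split $f(x)$ at the \emph{unrescaled} point and bound the singular part by $s(x)\leq\|s\|\,\|x\|^{o}$. Saturation of your chain then forces $s(x)=\|s\|$ and $\int v^{*}x^{*}=\|v\|_{\mathcal{M}_{\psi,\omega}}$, and neither matches the theorem: condition (2) says $s(kx)=\|s\|$, which by linearity means $s(x)=\|s\|/k$, while conditions (3)--(4) together with $\rho_{\varphi,\omega}(kx)=k-1$ give $\int vx=\|f\|-\|s\|/k$, strictly larger than $\|v\|_{\mathcal{M}_{\psi,\omega}}=\|f\|-\|s\|$ whenever $s\neq0$ and $k>1$ (and $k=1+\rho_{\varphi,\omega}(kx)>1$ for every nonzero $x$ with $K(x)\neq\emptyset$). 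Moreover your bound on $s$ is not the sharp one: singular functionals vanish on $E_{\varphi,\omega}$, so by $d^{o}(\cdot,E_{\varphi,\omega})=\theta$ one has $s(x)\leq\|s\|\,\theta(x)\leq\|s\|/k$, since $\rho_{\varphi,\omega}(kx)<\infty$; hence $s(x)=\|s\|$ is unattainable and ``equality at every step'' of your chain is not equivalent to $f(x)=\|f\|$, nor to (1)--(4). The phrase ``recorded at the rescaled point as (2)'' papers over exactly the factor $k$ that breaks the bookkeeping, and because your H\"older step is saturated at scale $\|v\|$ rather than $\|f\|$, the term $\|s\|/\|f\|$ in (3) can never emerge from your argument.

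The repair is to factor out the Amemiya multiplier \emph{before} splitting: write $f(x)=\frac{1}{k}\left(\int_{0}^{\infty}v\,(kx)\,dt+s(kx)\right)$, then chain
\begin{equation*}
k\|f\|\;\geq\;\int v\,(kx)\,dt+s(kx)\;\leq\;\int v^{*}(kx)^{*}dt+s(kx)\;\leq\;\|f\|\Bigl(\rho_{\varphi,\omega}(kx)+P_{\psi,\omega}\bigl(\tfrac{v}{\|f\|}\bigr)\Bigr)+s(kx),
\end{equation*}
and close it with two sharp estimates you omitted: $P_{\psi,\omega}(v/\|f\|)\leq\|v\|_{\mathcal{M}_{\psi,\omega}}/\|f\|=1-\|s\|/\|f\|$ (modular dominated by norm, valid since $\|v/\|f\|\|_{\mathcal{M}_{\psi,\omega}}\leq1$) and $s(kx)\leq\|s\|\,\theta(kx)\leq\|s\|$; summing and using $k=1+\rho_{\varphi,\omega}(kx)$ yields $f(x)\leq\|f\|$, and equality in the four links is then \emph{exactly} (1), (4), (3), (2) respectively, with sufficiency the same computation run forward. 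Your master Young bound itself is correct (including the Hardy-type step from $\omega^{v^{*}}\prec\omega$ and the monotonicity of $\varphi(kx^{*})$), so the gap is localized entirely in the handling of $s$ and of the scale $\|f\|$ versus $\|v\|$. Finally, your closing worry is misplaced: pinning $v^{*}$ inside $[\,p_{-}(kx^{*}),p(kx^{*})\,]\omega$ belongs to the supporting-functional result (Theorem \ref{Th15}), not to this attainment characterization, where (4) is merely the integral identity; and the $N$-function hypothesis is avoided here simply because $K(x)\neq\emptyset$ is assumed outright, not through any affine-interval analysis.
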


\begin{theorem}\label{th14}\cite[Theorem 3.7]{Wang2024}
For arbitrary Orlicz function $\varphi$ and decreasing weight $\omega$. Let $x\in S(\Lambda_{\varphi, \omega}^{o})$ and $K(x)\neq \emptyset$. Then $Grad(x)\subset \mathcal{M}_{\psi, \omega}$ 
if and only if one of the following conditions is satisfied.\\
	(1) $\theta(kx)<1$, $k\in K(x)$.\\
	(2) $P_{\psi,\omega}(p_{-}(kx^{*})\omega)=\rho_{\psi, \omega}(p_{-}(kx))=1$.
\end{theorem}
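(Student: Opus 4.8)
The plan is to reduce the membership $Grad(x)\subset\mathcal{M}_{\psi,\omega}$ to the single question of whether any norm-attaining functional can carry a nonzero singular part, and to control that part through the value of $\theta(kx)$, $k\in K(x)$. Two preliminary facts drive the argument. First, since $k\in K(x)$ forces $\rho_{\varphi,\omega}(kx)<\infty$ (which is what makes the Amemiya formula available), we always have $\theta(kx)\le 1$. Second, any singular functional $s$ annihilates $E_{\varphi,\omega}$, so $s(kx)=s(kx-e)$ for every $e\in E_{\varphi,\omega}$, and taking the infimum over $e$ together with $d^{o}(kx)=\theta(kx)$ gives the key inequality $s(kx)\le\|s\|\,\theta(kx)$; this bound is sharp because \cite[Theorem 2.48]{Chen1996} supplies singular functionals attaining $\theta$. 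Writing a candidate supporting functional as $f=L_{v}+s$, Theorem~\ref{NormattainOrlicznorm} characterizes when $f\in Grad(x)$; in particular, for $\|f\|=1$ its third condition forces $\|s\|=1-P_{\psi,\omega}(v)$. Thus $Grad(x)\subset\mathcal{M}_{\psi,\omega}$ amounts to showing that $\|s\|=0$ is forced for every admissible $f$.

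For condition (1), suppose $\theta(kx)<1$ and take any $f=L_{v}+s\in Grad(x)$. Theorem~\ref{NormattainOrlicznorm}(2) gives $s(kx)=\|s\|$, while the key inequality gives $s(kx)\le\|s\|\,\theta(kx)<\|s\|$ whenever $\|s\|>0$; hence $\|s\|=0$ and $f\in\mathcal{M}_{\psi,\omega}$. Since $\theta(kx)\le 1$ always holds, this disposes of every case except the boundary value $\theta(kx)=1$, so the theorem reduces to understanding that single boundary case, where condition (2) must be the exact dividing line.

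In the boundary case I would use the two-sided description of the regular part: reading condition (4) of Theorem~\ref{NormattainOrlicznorm} as the integral Young equality $\int_{R^{+}}kvx=\rho_{\varphi,\omega}(kx)+P_{\psi,\omega}(v)$ forces the pointwise Young equality, hence $p_{-}(kx^{*})\omega\le v^{*}\le p(kx^{*})\omega$ a.e.\ for the regular part of any supporting functional. If (2) holds, then monotonicity of the modular gives $P_{\psi,\omega}(v)\ge P_{\psi,\omega}(p_{-}(kx^{*})\omega)=1$, so condition (3) forces $\|s\|=1-P_{\psi,\omega}(v)\le 0$, i.e.\ $\|s\|=0$ and $Grad(x)\subset\mathcal{M}_{\psi,\omega}$. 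Conversely, if $\theta(kx)=1$ but (2) fails, then using $P_{\psi,\omega}(p_{-}(kx^{*})\omega)\le\rho_{\psi,\omega}(p_{-}(kx))\le 1$ (the first inequality coming from taking $g=\omega$ in the infimum defining $P_{\psi,\omega}$ and the second from $k\le k^{**}$) one gets $P_{\psi,\omega}(v_{0})<1$ for $v_{0}$ with $v_{0}^{*}=p_{-}(kx^{*})\omega$. Since $\theta(kx)=1$, I can pick via \cite[Theorem 2.48]{Chen1996} a singular $s$ with $\|s\|=1-P_{\psi,\omega}(v_{0})>0$ and $s(kx)=\|s\|\,\theta(kx)=\|s\|$; then $L_{v_{0}}+s$ satisfies all of conditions (1)--(4) of Theorem~\ref{NormattainOrlicznorm} and therefore lies in $Grad(x)$ with nonzero singular part, so $Grad(x)\not\subset\mathcal{M}_{\psi,\omega}$. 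This yields the necessity of (1) or (2).

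The hard part will be the boundary case $\theta(kx)=1$: one must verify that the integral Young equality in Theorem~\ref{NormattainOrlicznorm}(4) genuinely pins $v^{*}$ into the band $[\,p_{-}(kx^{*})\omega,\,p(kx^{*})\omega\,]$ (so that $P_{\psi,\omega}(v)\ge P_{\psi,\omega}(p_{-}(kx^{*})\omega)$), and one must reconcile the two modulars in (2), namely $P_{\psi,\omega}(p_{-}(kx^{*})\omega)=\rho_{\psi,\omega}(p_{-}(kx))$. The inequality $P_{\psi,\omega}(p_{-}(kx^{*})\omega)\le\rho_{\psi,\omega}(p_{-}(kx))$ is immediate, but showing that equality to $1$ is exactly what blocks a singular part, and then checking that the functional $L_{v_{0}}+s$ built in the converse really meets every clause of Theorem~\ref{NormattainOrlicznorm} rather than merely having norm one, is the delicate technical core.
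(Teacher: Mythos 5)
The paper itself gives no proof of Theorem~\ref{th14} (it is imported verbatim from \cite[Theorem 3.7]{Wang2024}), so your proposal can only be judged on its own merits. Its architecture is the right one: the bound $s(kx)\le\|s\|\,\theta(kx)$ obtained from $d^{o}(kx)=\theta(kx)$, combined with clause (2) of Theorem~\ref{NormattainOrlicznorm}, correctly kills the singular part when $\theta(kx)<1$; and in the boundary case $\theta(kx)=1$ the dividing line is indeed whether the modular of $p_{-}(kx^{*})\omega$ equals $1$. Two of the points you flag as delicate are in fact harmless. First, the Young-equality in clause (4) pins the \emph{level function} $(v^{*})^{0}$, not $v^{*}$ itself, into the band $[\,p_{-}(kx^{*})\omega,\,p(kx^{*})\omega\,]$; but since $P_{\psi,\omega}(v)=\int_{R^{+}}\psi\bigl((v^{*})^{0}/\omega\bigr)\omega\,dt$, the band on $(v^{*})^{0}$ already yields $P_{\psi,\omega}(v)\ge\rho_{\psi,\omega}(p_{-}(kx))=1$, which is all your sufficiency argument needs (alternatively, quote Theorem~\ref{Th15}). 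Second, since $p_{-}(kx^{*})\omega$ is non-increasing and $p_{-}(kx^{*})\omega/\omega=p_{-}(kx^{*})$ is non-increasing, its level intervals are trivial, so $P_{\psi,\omega}(p_{-}(kx^{*})\omega)=\rho_{\psi,\omega}(p_{-}(kx))$ holds automatically; hence ``(2) fails'' reduces to $\rho_{\psi,\omega}(p_{-}(kx))<1$, exactly the case you treat.

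The genuine gap is in the necessity construction. You build $f=L_{v_{0}}+s$ with $v_{0}^{*}=p_{-}(kx^{*})\omega$, $\|s\|=1-P_{\psi,\omega}(v_{0})>0$, $s(kx)=\|s\|$, and propose to conclude $f\in Grad(x)$ by ``verifying clauses (1)--(4) of Theorem~\ref{NormattainOrlicznorm}.'' But clauses (3)--(4) are stated in terms of the \emph{actual} norm $\|f\|$, and your data only give $f(x)=\frac{1}{k}\left(1+\rho_{\varphi,\omega}(kx)\right)=1$, hence $\|f\|\ge 1$. Since $\|f\|=\|v_{0}\|_{\mathcal{M}_{\psi,\omega}}+\|s\|$ and in general $\|v_{0}\|_{\mathcal{M}_{\psi,\omega}}\ge P_{\psi,\omega}(v_{0})$ when the modular is below $1$ (equality is atypical), the identity $\|f\|=1$ is precisely what must be proved, and nothing in your plan does it; if $\|f\|>1$ then $f$ does not support $x$ at all. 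The repair is a direct Amemiya estimate rather than an appeal to Theorem~\ref{NormattainOrlicznorm}: for every $y$ and every $h>0$ with $\rho_{\varphi,\omega}(hy)<\infty$ one has $\theta(y)\le 1/h$, so $f(y)\le\frac{1}{h}\left(\rho_{\varphi,\omega}(hy)+P_{\psi,\omega}(v_{0})\right)+\|s\|\,\theta(y)\le\frac{1}{h}\left(1+\rho_{\varphi,\omega}(hy)\right)$, using the level-function H\"older inequality $\int hyv_{0}\le\rho_{\varphi,\omega}(hy)+P_{\psi,\omega}(v_{0})$; taking the infimum over $h$ gives $f(y)\le\|y\|_{\varphi,\omega}^{o}$, whence $\|f\|=1=f(x)$ and $f\in Grad(x)$ with nonzero singular part. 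With this inserted (and a remark that $k=1+\rho_{\varphi,\omega}(kx)>1$ rules out purely singular supporting functionals, a case Theorem~\ref{NormattainOrlicznorm} excludes by assuming $v\ne 0$), your argument closes.
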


\begin{theorem}\cite[Theorem 3.8]{Wang2024}\label{Th15}
Assume $\varphi$ is an Orlicz function and $\omega$ is a decreasing weight. $L_{v}\in S(\mathcal{M}_{\psi, \omega})$ is a supporting functional of $x\in \Lambda_{\varphi, \omega}^{o}$($K(x)\neq \emptyset$) if and only if the following conditions are satisfied.\\
	(1) $P_{\psi, \omega}(v)=1$, $v(t)=v^{*}(\sigma(t))sign\:x(t)$. \\
	(2) $p_{-}(kx^{*}(t))\omega(t)\leq v^{*}(t)\leq p(kx^{*}(t))\omega(t)$ a.e in $R_{+}$ where $k\in K(x)$ and $\sigma$ is a measure preserving transformation such that $|x(t)|=x^{*}(\sigma(t))$, i.e.,
\begin{equation*}
p_{-}(k|x(t)|)\omega(\sigma(t))\leq v\:sign \:x(t)\leq p(k|x(t)|)\omega(\sigma(t)), a.e.\:on\:R_{+}.
\end{equation*}
\end{theorem}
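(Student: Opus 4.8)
The plan is to recognise the statement as the $s=0$ specialisation of the norm-attainment criterion of Theorem \ref{NormattainOrlicznorm} and then to convert the modular identity appearing there into the pointwise sandwich of condition (2) by invoking the equality case of Young's inequality. First I would observe that, by Lemma \cite[Lemma 2.22]{Wang2024}, the norm of $L_{v}$ in $(\Lambda_{\varphi,\omega}^{o})^{*}$ equals $\|v\|_{\mathcal{M}_{\psi,\omega}}+\|s\|$; hence $L_{v}\in S(\mathcal{M}_{\psi,\omega})$ supporting $x$ means exactly that the functional $f=L_{v}+s$ with vanishing singular part $s=0$ is norm-attainable at $x$ with $\|f\|=\|v\|_{\mathcal{M}_{\psi,\omega}}=1$. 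This lets me apply Theorem \ref{NormattainOrlicznorm} with $s=0$ and $\|f\|=1$: its condition $s(kx)=\|s\|$ becomes vacuous, its conditions $v(t)=v^{*}(\sigma(t))\,\mathrm{sign}\,x(t)$ and $P_{\psi,\omega}(v/\|f\|)+\|s\|/\|f\|=1$ collapse to precisely condition (1) of the present theorem, and its integral condition reduces to the modular identity
\[
\int_{0}^{\infty}kv^{*}(t)x^{*}(t)\,dt=\rho_{\varphi,\omega}(kx)+P_{\psi,\omega}(v),\qquad k\in K(x),
\]
where I have used condition (1) together with the measure-preserving change of variables $u=\sigma(t)$ (legitimate since $|x(t)|=x^{*}(\sigma(t))$) to write $\int kvx=\int_{0}^{\infty}kv^{*}x^{*}$. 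Everything thus reduces to showing that, given condition (1), this modular identity is equivalent to the pointwise inequalities of condition (2).

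For the forward implication I would start from the pointwise Young inequality $kx^{*}(t)\,\tfrac{v^{*}(t)}{\omega(t)}\le\varphi(kx^{*}(t))+\psi\!\big(\tfrac{v^{*}(t)}{\omega(t)}\big)$, multiply by $\omega(t)$ and integrate, using the level-function representation of $P_{\psi,\omega}(v)$ from the lemma of \cite{2019abstractlorentz} to identify the $\psi$-term with $P_{\psi,\omega}(v)$. This produces $\int kv^{*}x^{*}\le\rho_{\varphi,\omega}(kx)+P_{\psi,\omega}(v)$, so the modular identity forces equality in the integrated inequality; since the integrand inequality holds pointwise, equality must hold a.e., and the equality clause of Young's inequality recorded in the preliminaries (namely $uv=\varphi(u)+\psi(v)$ iff $v\in[p_{-}(u),p(u)]$) yields $\tfrac{v^{*}(t)}{\omega(t)}\in[p_{-}(kx^{*}(t)),p(kx^{*}(t))]$ a.e., i.e. condition (2). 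The converse is immediate: condition (2) turns the pointwise Young inequality into an a.e. equality, and integrating recovers the modular identity, hence condition (4) of Theorem \ref{NormattainOrlicznorm}. Finally, transporting the rearranged inequalities $p_{-}(kx^{*})\omega\le v^{*}\le p(kx^{*})\omega$ back through $\sigma$ and condition (1) gives the equivalent form $p_{-}(k|x|)\omega(\sigma)\le v\,\mathrm{sign}\,x\le p(k|x|)\omega(\sigma)$ stated in the theorem.

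The hard part will be the weight bookkeeping in the Young-inequality step, because $P_{\psi,\omega}(v)$ is defined through the inverse level function $\omega^{v^{*}}$ (equivalently the level function $(v^{*})^{0}$), so the integrated Young inequality most naturally carries the weight $\omega^{v^{*}}$ rather than the $\omega$ demanded by condition (2), and a naive argument only recovers the sandwich against $\omega^{v^{*}}$ (equivalently, on a level interval only the average of $v^{*}/\omega$ is controlled). To upgrade to the original weight I expect to show that $x^{*}$ is constant on every maximal level interval $(a_{n},b_{n})$ of $v^{*}$: comparing the two representations $P_{\psi,\omega}(v)=\int\varphi$-free$\,\psi((v^{*})^{0}/\omega)\omega=\int\psi(v^{*}/\omega^{v^{*}})\omega^{v^{*}}$ forces $\int_{a_{n}}^{b_{n}}\varphi(kx^{*})(\omega^{v^{*}}-\omega)\,dt=0$, and since the defining inequality of a level interval gives $\int_{a_{n}}^{s}(\omega^{v^{*}}-\omega)\,dt\le0$ with equality at the endpoints while $\varphi(kx^{*})$ is non-increasing, an integration by parts shows this can vanish only where $x^{*}$ is constant. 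With $x^{*}$ constant on the level intervals and $(v^{*})^{0}$, $v^{*}$ having equal integral there, one gets $\int\varphi(kx^{*})\omega^{v^{*}}=\rho_{\varphi,\omega}(kx)$ and $\int x^{*}v^{*}=\int x^{*}(v^{*})^{0}$, which reconciles the weights and legitimises the passage to $\omega$. Carrying out this constancy argument cleanly—and treating the affine intervals of $\varphi$, where the Young equality clause is the two-sided membership $v\in[p_{-}(u),p(u)]$—is where the genuine technical work lies.
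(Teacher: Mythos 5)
The paper contains no proof of this statement to compare against --- it is imported verbatim from \cite{Wang2024} --- so your proposal has to stand on its own. Your overall route is the natural one and largely sound: reducing to Theorem \ref{NormattainOrlicznorm} with $s=0$ and $\|f\|=\|v\|_{\mathcal{M}_{\psi,\omega}}=1$, converting condition (4) there into a pointwise statement via the equality case of Young's inequality, and your sufficiency direction goes through. You also correctly located the delicate point: since $P_{\psi,\omega}(v)=\int\psi(v^{*}/\omega^{v^{*}})\omega^{v^{*}}$, the equality case of Young only yields $v^{*}/\omega^{v^{*}}\in[p_{-}(kx^{*}),p(kx^{*})]$ a.e.\ together with $\int\varphi(kx^{*})(\omega-\omega^{v^{*}})\,dt=0$, and your integration-by-parts argument that $x^{*}$ must then be constant on each maximal level interval of $v^{*}$ is correct.

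The gap is your final step, ``this reconciles the weights and legitimises the passage to $\omega$'': constancy of $x^{*}$ on a level interval does \emph{not} upgrade the $\omega^{v^{*}}$-sandwich to the $\omega$-sandwich, because on a level interval only the constant ratio $v^{*}/\omega^{v^{*}}=R(a_{n},b_{n})=(v^{*})^{0}/\omega$ is controlled; what your argument actually proves is $p_{-}(kx^{*})\omega\le (v^{*})^{0}\le p(kx^{*})\omega$ a.e., i.e.\ the sandwich for the level function, not for $v^{*}$ itself. Concretely, take $\varphi(u)=u^{2}/2$ (so $p(u)=u$, $\psi=\varphi$), $\omega(t)=1/(1+t)$, $k=1$, $x=x^{*}$ with $x^{*}(t)=c(2-t)$ on $[0,1]$, $x^{*}\equiv c$ on $[1,2]$, $x^{*}(t)=c(3-t)$ on $[2,3]$, zero afterwards, with $c$ normalized so that $\int\psi(p(x^{*}))\omega\,dt=1$ (hence $1\in K(x)$); set $v=v^{*}=x^{*}\omega$ off $(1,2)$ and $v^{*}\equiv c\ln(3/2)$ on $(1,2)$. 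One checks that $(1,2)$ is a maximal level interval of $v^{*}$ with ratio $R=c$, that $(v^{*})^{0}=x^{*}\omega$ everywhere, that $P_{\psi,\omega}(v)=1$, and that $\int x^{*}v^{*}\,dt=\rho_{\varphi,\omega}(x)+P_{\psi,\omega}(v)$, so $L_{v}\in S(\mathcal{M}_{\psi,\omega})$ does support $x$; yet $v^{*}\equiv c\ln(3/2)\neq c\,\omega(t)=p(x^{*}(t))\omega(t)$ on $(1,2)$, so the pointwise condition (2) as you (and the quoted statement) formulate it fails. Thus the necessity of (2) with the weight $\omega$ can only be established off the level intervals of $v^{*}$ (where $\omega^{v^{*}}=\omega$); on level intervals the correct conclusion involves $(v^{*})^{0}$, and no refinement of your constancy argument can deliver the literal inequality for $v^{*}$ there. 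You should either prove and state the $(v^{*})^{0}$ version, or isolate a hypothesis forcing $\omega^{v^{*}}=\omega$ (absence of level intervals for $v^{*}$), before relying on this theorem downstream.
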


\subsection{Exposed point in Orlicz-Lorentz space}\label{Exposed}
\begin{theorem}
Assume $\varphi$ is an Orlicz function with its complementary function $\psi$ satisfying $\lim_{t\rightarrow\infty}\frac{\psi(t)}{t}=B<\infty$ and $\lim_{t\rightarrow\infty} Bt-\psi(t)<\infty$. Let $v\in S(\mathcal{M}_{\psi, \omega})$ and $\varphi(B)\int_{0}^{\mu(supp~v)}\omega(t)dt<1$. Assume $v$ attain its norm at $x\in S(\Lambda_{\varphi, \omega})$, then $x(t)=B\chi_{supp~v}(t)+x_{0}(t)$ is not an exposed point of $B(\Lambda_{\varphi, \omega})$ where $(supp~v)\cap(supp~x_{0})=\emptyset$ and $x_{0}(t)\leq B$.
\end{theorem}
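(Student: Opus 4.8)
The plan is to prove the contrapositive-style statement directly: I will show that \emph{no} functional in $Grad(x)$ can expose $x$, by producing for each $f\in Grad(x)$ a second point $y\neq x$ of the unit ball with $f(y)=f(x)=1$, so the contact set of $f$ is never $\{x\}$. First I record the structural consequence of the hypotheses on $\psi$. Since $\psi$ is convex with $\psi(0)=0$, the ratio $\psi(t)/t$ increases to $B$, so $\psi(t)\le Bt$ and $Bt-\psi(t)\uparrow\varphi(B)<\infty$; passing to the complementary function this gives $\varphi(s)<\infty$ for $s\le B$ and $\varphi(s)=+\infty$ for $s>B$. Consequently $B(\Lambda_{\varphi,\omega})=\{y:\rho_{\varphi,\omega}(y)\le1\}\subseteq\{|y|\le B\ \text{a.e.}\}$ and $\theta(y)=y^{*}(0)/B$. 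As $x=B$ on $supp~v$ (a set of positive measure) we have $x^{*}(0)=B$, hence $\theta(x)=1=\|x\|_{\varphi,\omega}$.

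Next I analyse an arbitrary $f\in Grad(x)$. Write $f=L_{w}+s$ with $w\in\mathcal{M}_{\psi,\omega}$ and $s\in F$; by Lemma~\cite[Lemma 2.22]{Wang2024}, $\|f\|^{o}=\|w\|_{\mathcal{M}_{\psi,\omega}^{o}}+\|s\|^{o}=1$. Combining the duality bound $\int xw\le\|x\|_{\varphi,\omega}\|w\|_{\mathcal{M}_{\psi,\omega}^{o}}=\|w\|_{\mathcal{M}_{\psi,\omega}^{o}}$ with the singular estimate $s(x)\le\|s\|^{o}\theta(x)=\|s\|^{o}$ (valid because $s$ vanishes on $E_{\varphi,\omega}$ and $d^{o}(x)=\theta(x)$), the equality $f(x)=\int xw+s(x)=1$ forces both parts to norm $x$ separately:
\begin{equation*}
\int xw=\|w\|_{\mathcal{M}_{\psi,\omega}^{o}},\qquad s(x)=\|s\|^{o}.
\end{equation*}
In particular $L_{w}$ attains its norm at $x$, so by the norm--attainment characterisation for functionals of $\mathcal{M}_{\psi,\omega}$ proved just above, $x=B$ on $supp~w$, i.e. $supp~w\subseteq\{|x|=B\}$.

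Now I build the perturbation. The strict inequality $\varphi(B)\int_{0}^{\mu(supp~v)}\omega(t)dt<1$ means the plateau $B\chi_{supp~v}$ cannot exhaust the modular, so there exist $\varepsilon>0$ and a set $R$ of positive measure with $|x|\le B-\varepsilon$ on $R$ on which $x$ can be altered keeping $\rho_{\varphi,\omega}\le1$: either by strictly \emph{lowering} $|x|$ on $R$, in which case $|y|\le|x|$ pointwise gives $y^{*}\le x^{*}$ and hence $\rho_{\varphi,\omega}(y)\le\rho_{\varphi,\omega}(x)\le1$, or, if $\rho_{\varphi,\omega}(x)<1$, by spending the spare budget on $\{x=0\}$. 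Fix such a modification $y\neq x$ agreeing with $x$ off $R$. Since $supp~w\subseteq\{|x|=B\}$ is disjoint from $R$ we have $\int yw=\int xw$; and since a norming singular functional is carried by the maximal level set $\{|x|=B\}$ (the identity $s(x)=\|s\|^{o}\theta(x)$ with $\theta(x)=1$ forces $s$ to annihilate every function supported in $\{|x|\le B-\varepsilon\}$, exactly as a norming purely finitely additive functional on $L^{\infty}$ lives on $\{|x|=\|x\|_{\infty}\}$), we get $s(y-x)=0$. Hence
\begin{equation*}
f(y)=\int yw+s(y)=\int xw+s(x)=f(x)=1,
\end{equation*}
with $y\in B(\Lambda_{\varphi,\omega})$ and $y\neq x$. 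Thus $f$ does not expose $x$, and as $f\in Grad(x)$ was arbitrary, $x$ is not an exposed point.

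The main obstacle is precisely the concentration statement for the singular part: $s(x)=\|s\|^{o}\theta(x)$ with $\theta(x)=x^{*}(0)/B=1$ must force $s$ to vanish on functions supported strictly below the level $B$. The crude estimate $|s(g)|\le\|s\|^{o}\theta(g)$ only bounds $s$ there, so one has to invoke the representation of singular functionals on the non-$\Delta_{2}$ space $\Lambda_{\varphi,\omega}$ (cf.\ \cite[Theorem 2.48]{Chen1996}) to locate their carrier exactly on $\{|x|=B\}$. A secondary, purely technical, point is that the decreasing rearrangement makes $\rho_{\varphi,\omega}$ nonlocal; I sidestep any rearrangement inequality by only lowering $|x|$ on $R$ (so $y^{*}\le x^{*}$ pointwise) or by consuming strictly positive spare modular budget, which keeps $y$ in the unit ball with no further estimate.
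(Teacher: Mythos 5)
First, a point of comparison: the paper states this theorem \emph{without any proof} (it is one of the two unproved theorems opening Subsection \ref{Exposed}), so your proposal has to be judged on its own merits, and it does not survive scrutiny. The decisive gap is the step ``$L_{w}$ attains its norm at $x$, so $\mathrm{supp}\,w\subseteq\{|x|=B\}$''. The characterisation you invoke (the paper's norm-attainment theorem for $\mathcal{M}_{\psi,\omega}$) is proved only in the degenerate regime $\varphi(B)\int_{0}^{\mu(\mathrm{supp}\,w)}\omega(t)\,dt\leq 1$, i.e.\ when $K_{\mathcal{M}}(w)=\emptyset$ and $\|w\|_{\mathcal{M}_{\psi,\omega}^{o}}=B\int_{0}^{\mu(\mathrm{supp}\,w)}w^{*}$. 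The hypothesis of the theorem bounds only $\mathrm{supp}\,v$, not the support of the regular part $w$ of an arbitrary $f\in Grad(x)$; a $w$ with large support is normed through the Young-equality/Amemiya mechanism (Theorem \ref{normattainluxemburg}) and need not vanish off the plateau. Concretely, take $\varphi(s)=s^{2}/2$ on $[0,1]$, $\varphi=\infty$ beyond (so $B=1$, $\varphi(B)=1/2$, $\psi(t)=t^{2}/2$ for $t\leq1$ and $t-1/2$ after, $p_{-}(1)=1$, $p(1)=\infty$), $\omega$ strictly decreasing, $x=\chi_{[0,a)}+x_{0}$ with $x_{0}$ strictly decreasing, valued in $(0,1)$, chosen so that $\rho_{\varphi,\omega}(x)=1$ while $\tfrac12 W(a)<1$. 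Then $w$ with $w^{*}=2\omega$ on $[0,a)$ and $w^{*}=x_{0}^{*}\omega$ beyond satisfies Young's equality everywhere ($1\cdot 2=\varphi(1)+\psi(2)$), so $L_{w}/\|w\|\in Grad(x)$ with $\mathrm{supp}\,w=\mathrm{supp}\,x\supsetneq\{|x|=B\}$; worse, the equality analysis (modular saturation plus strict monotonicity of $w$) pins the contact point uniquely, so this $f$ actually \emph{exposes} $x$. So in the strong reading you adopt (``no functional exposes $x$'') the statement itself appears false once $\rho_{\varphi,\omega}(x)=1$ is allowed, and your strategy of finding, for every $f$, a perturbation invisible to $f$ cannot succeed; the defensible content is only that $v$ itself fails to expose $x$, which is immediate since the contact set of $v$ contains every $B\chi_{\mathrm{supp}\,v}+x_{0}'$ of norm one.

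The second gap is the one you flagged yourself, and it is fatal rather than technical: the localisation of the norming singular part. Here $E_{\varphi,\omega}=\{0\}$ (for any $g\neq0$ one has $\rho_{\varphi,\omega}(\lambda g)=\infty$ once $\lambda g^{*}(0)>B$), so ``$s$ vanishes on $E_{\varphi,\omega}$'' carries no information, $\theta(g)\geq g^{*}(0)/B>0$ for every $g\neq0$, and the bound $|s(g)|\leq\|s\|^{o}\theta(g)$ gives $|s(g)|\leq\|s\|^{o}g^{*}(0)/B$, not $s(g)=0$; the $L^{\infty}$ heuristic uses positivity and band decompositions whose norm-additivity is unavailable here, and \cite[Theorem 2.48]{Chen1996} produces singular functionals but does not locate their carrier. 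Two smaller inaccuracies: $\theta(y)=y^{*}(0)/B$ holds only with ``$\geq$'' in general (the tail integrability of $\varphi(y^{*}/\lambda)\omega$ matters outside $\Delta_{2}$; it is fine for the specific $x$, where $\theta(x)=1$), and your ``spare budget'' case can be repaired without any localisation: if $\rho_{\varphi,\omega}(x)<1$, take $D$ disjoint from $\mathrm{supp}\,x$ of finite measure and $y_{\pm}=x\pm c\chi_{D}$ with $c$ small; then $|y_{+}|=|y_{-}|$, both lie in the ball, and $f(y_{+})+f(y_{-})=2f(x)$ shows no $f$ whatsoever exposes $x$ --- no knowledge of $w$ or $s$ needed. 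It is precisely the remaining case $\rho_{\varphi,\omega}(x)=1$ where your argument collapses and where the counterexample above lives.
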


\begin{theorem}
Assume $\varphi$ is an Orlicz function with its complementary function $\psi$ satisfying $\lim_{t\rightarrow\infty}\frac{\psi(t)}{t}=B<\infty$ and $\lim_{t\rightarrow\infty} Bt-\psi(t)<\infty$. Let $v\in S(\mathcal{M}_{\psi, \omega})$ and $\varphi(B)\int_{0}^{\mu(supp~v)}\omega(t)dt=1$. Assume $v$ attain its norm at $x\in S(\Lambda_{\varphi, \omega})$, then $x(t)=B\chi_{supp~v}(t)$ is an exposed point of $B(\Lambda_{\varphi, \omega})$.
\end{theorem}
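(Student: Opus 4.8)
The plan is to exhibit a single functional that exposes $x=B\chi_{supp~v}$, namely (a normalisation of) $L_{v}$ itself, and to show that the equality hypothesis $\varphi(B)\int_{0}^{\mu(supp~v)}\omega(t)dt=1$ forces $x$ to be the \emph{unique} point of $B(\Lambda_{\varphi,\omega})$ at which $L_{v}$ attains its maximum. Write $E=supp~v$ and $m=\mu(E)$. The asymptotic linearity of $\psi$ gives $\varphi(B)<\infty$ and $\varphi\equiv\infty$ on $(B,\infty)$, so that $\rho_{\varphi,\omega}(x)=\varphi(B)W(m)=1$ by hypothesis and hence $x\in S(\Lambda_{\varphi,\omega})$. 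Setting $f=L_{v}/\|L_{v}\|$, the assumption that $v$ attains its norm at $x$ yields $f\in Grad(x)$ with $f(x)=\|x\|=1=\sup_{\|y\|\le 1}f(y)$. By the definition of an exposed point it therefore suffices to prove that $f(y)=f(x)$ together with $y\in B(\Lambda_{\varphi,\omega})$ forces $y=x$.

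First I would reduce to norm-attaining points. If $y\in B(\Lambda_{\varphi,\omega})$ satisfies $f(y)=1$, then $1=f(y)\le\|f\|\,\|y\|=\|y\|\le1$, so both inequalities are equalities: $\|y\|=1$ and $L_{v}$ attains its norm at $y$. I would then invoke the norm-attainment characterization proved above, which gives $\rho_{\varphi,\omega}(y)=1$ and the structural form $y=B\chi_{E}+y_{0}$ with $(supp~y_{0})\cap E=\emptyset$ and $|y_{0}|\le B$.

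The decisive step is a modular computation. Since $|y|=B$ on $E$ and $|y|=|y_{0}|\le B$ off $E$, the decreasing rearrangement satisfies $y^{*}=B$ on $[0,m_{B})$ with $m_{B}=\mu\{|y|=B\}\ge m$, so that
\begin{equation*}
\rho_{\varphi,\omega}(y)=\varphi(B)W(m_{B})+\int_{m_{B}}^{\infty}\varphi(y^{*}(t))\omega(t)dt\ge\varphi(B)W(m)=1.
\end{equation*}
Comparing with $\rho_{\varphi,\omega}(y)=1$ would force both $\varphi(B)W(m_{B})=1$ and $\int_{m_{B}}^{\infty}\varphi(y^{*})\omega=0$. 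The first identity gives $W(m_{B})=W(m)$, hence $m_{B}=m$ by strict monotonicity of $W$ (as $\omega>0$), so $|y_{0}|<B$ a.e.; the second, since $\omega>0$ and $\varphi(u)>0$ for $u>0$, gives $y^{*}=0$ on $(m,\infty)$, i.e. $\mu\{|y|>0\}\le m$. As $\{|y|>0\}\supseteq E$ and $\mu(E)=m$, one concludes $\{|y|>0\}=E$ up to a null set, so $y_{0}=0$ and $y=x$. This yields the required uniqueness and proves that $f$ exposes $x$.

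The main obstacle is precisely this last computation: one must set up the rearrangement bookkeeping so that $\rho_{\varphi,\omega}(y)$ splits at the level $m$ (equivalently, at height $B$) and then exploit the \emph{sharp} equality $\varphi(B)W(m)=1$, which leaves no modular ``budget'' for either extra mass at height $B$ or any tail beyond $E$. This is exactly where the present case departs from the strict-inequality case of the preceding theorem: there the slack in $\varphi(B)W(m)<1$ permits a nontrivial perturbation $y_{0}$ supported off $E$ to remain in the unit ball while leaving $L_{v}(y)$ unchanged, so that $x$ fails to be the unique maximiser and is not exposed.
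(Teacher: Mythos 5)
Your proposal is correct. Note that the paper itself states this theorem (and its strict-inequality companion) with no proof at all, so there is no argument of the authors' to compare against; what you have written is the natural completion, and it stays entirely within the paper's own toolkit: you invoke the paper's earlier characterization of norm attainment in the degenerate case $\lim_{t\to\infty}\psi(t)/t=B<\infty$, $\lim_{t\to\infty}(Bt-\psi(t))<\infty$ --- namely $\rho_{\varphi,\omega}(y)=1$ and $y=B\chi_{supp\,v}+y_{0}$ with disjoint supports and $|y_{0}|\le B$ --- and then the sharp modular bookkeeping $\rho_{\varphi,\omega}(y)=\varphi(B)W(m_{B})+\int_{m_{B}}^{\infty}\varphi(y^{*})\omega\ge\varphi(B)W(m)=1$ correctly forces $m_{B}=m$ (strict monotonicity of $W$, since $\omega>0$) and a vanishing tail, hence $y_{0}=0$ and $y=x$. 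Your reduction $1=f(y)\le\|f\|\,\|y\|\le 1$, which upgrades any maximizer to a norm-attaining point of $S(\Lambda_{\varphi,\omega})$ before applying the characterization, is exactly the step needed to make the definition of an exposed point bite, and your closing observation correctly isolates why equality $\varphi(B)W(m)=1$ (no modular slack) is what separates this theorem from the preceding non-exposedness result. Two minor caveats, both inherited from the paper rather than introduced by you: the paper vacillates between $\psi(B)$ and $\varphi(B)$ in the hypothesis multiplying $\int_{0}^{\mu(supp\,v)}\omega$ (your consistent use of $\varphi(B)$, the jump point of $\varphi$, is the correct reading, as your own computation $\rho_{\varphi,\omega}(B\chi_{[0,m]})=\varphi(B)W(m)$ shows); and the sign conventions ($v\ge 0$, or $y\,\mathrm{sign}\,v=B$ on $supp\,v$) are left implicit in the cited characterization, so strictly speaking your argument pins down $y$ only up to the same convention --- a one-line remark would close this.
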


\begin{theorem}\label{Sprime}
For arbitrary $x\in\Lambda_{\varphi, \omega}$ and $x=x^{*}\circ \sigma$. If there exists $\alpha\in S^{\prime}$ such that $\mu\{t:x^{*}(t)=\alpha\}>0$. Then $x$ is not an exposed point of $B(\Lambda_{\varphi, \omega})$.
\end{theorem}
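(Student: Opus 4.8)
The plan is to show directly that no supporting functional can expose $x$: for an arbitrary $f=L_{v}+s\in Grad(x)$ I will produce a point $y\in B(\Lambda_{\varphi,\omega})$ with $y\neq x$ and $f(y)=f(x)=\|x\|=1$, so that the defining inequality $f(x)>f(y)$ fails. Since exposed points lie on $S(\Lambda_{\varphi,\omega})$ I may assume $\|x\|=1$, and by hypothesis $\alpha$ lies in some affine interval $[a_{i},b_{i}]$ of $\varphi$ with $a_{i}<b_{i}$; write $c_{0}$ for the slope of $\varphi$ on $(a_{i},b_{i})$. Let $E=\{t:x^{*}(t)=\alpha\}$, an interval of positive length, and let $\sigma$ be the measure preserving transformation with $|x|=x^{*}\circ\sigma$.

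The perturbation keeps $y^{*}=x^{*}$ off the band $I=\{t:x^{*}(t)\in[\alpha-\varepsilon,\alpha+\varepsilon]\}$ and replaces $x^{*}$ on $I$ by a non-increasing function valued in $[\alpha-\varepsilon,\alpha+\varepsilon]\subseteq[a_{i},b_{i}]$ (for small $\varepsilon$) with $y^{*}\neq x^{*}$ and $\int_{I}y^{*}\omega=\int_{I}x^{*}\omega$; concretely, when $x^{*}$ jumps across $\alpha$ this is the weight-balanced two-step $y^{*}=(\alpha+\varepsilon)\chi_{[c,c^{*}]}+(\alpha-\varepsilon)\chi_{[c^{*},d]}$ with $\int_{c}^{c^{*}}\omega=\int_{c^{*}}^{d}\omega$, and in general it is any $\omega$-mean-preserving non-increasing modification on $I$ (the family of such is infinite-dimensional because $|I|>0$, so a choice $y^{*}\neq x^{*}$ exists). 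I then set $y=(y^{*}\circ\sigma)\,\mathrm{sign}\,x$, so that $y^{*}$ is genuinely the decreasing rearrangement of $y$ and $\mathrm{sign}\,y=\mathrm{sign}\,x$.

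Two verifications give the result. First, $\|y\|=1$: since $\varphi$ is affine on $[a_{i},b_{i}]$, for every $\lambda$ near $1$ the map $u\mapsto\varphi(u/\lambda)$ is affine on the relevant range, so $\int_{I}\varphi(y^{*}/\lambda)\omega=\int_{I}\varphi(x^{*}/\lambda)\omega$ by the $\omega$-mean condition; hence $\rho_{\varphi,\omega}(\lambda y)=\rho_{\varphi,\omega}(\lambda x)$ in a neighbourhood of $\lambda=1$, and the two Luxemburg norms coincide. Second, $f(y)=f(x)$: the difference $y-x$ is bounded and supported on the finite-measure set $\sigma^{-1}(I)$, hence lies in $E_{\varphi,\omega}$, so the singular part satisfies $s(y-x)=0$; moreover by the norm-attainment characterization (Theorem \ref{normattainluxemburg}) one has $v(t)=v^{*}(\sigma(t))\,\mathrm{sign}\,x(t)$, and the Young-equality condition forces $v^{*}$ to be a constant multiple of $c_{0}\omega$ a.e. on $I$, because $p_{-}(\alpha)=p(\alpha)=c_{0}$ throughout the interior of the affine interval. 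Consequently $\int v(y-x)=\int_{I}v^{*}(y^{*}-x^{*})=(\mathrm{const})\,c_{0}\int_{I}\omega(y^{*}-x^{*})=0$ (and if $v=0$ this term is simply absent), so $f(y)=f(x)$; as $f\in Grad(x)$ was arbitrary, $x$ is not exposed.

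The main obstacle is the case where $\alpha$ is an endpoint of its affine interval, i.e. $\alpha\in A\cup B\cup A'\cup B'$. There $v^{*}$ need no longer equal $c_{0}\omega$ on the part of $I$ lying on the non-affine side of $\alpha$, and $\varphi$ is only convex, not affine, across $\alpha$, so the simultaneous cancellation of the modular increment and the functional increment breaks down. The remedy is to balance the excursions using the two one-sided derivatives $p_{-}(\alpha)$ and $p(\alpha)$ separately, choosing the up- and down-amounts and the split point so that the $v^{*}$-weighted increment vanishes while the strict convexity on the non-affine shoulder is used to keep the modular increment nonpositive (an $\omega$-mean-preserving contraction toward $\alpha$); this delicate bookkeeping is what must be carried out to cover every $\alpha\in S'$. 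A secondary technical point, already reflected above, is checking that the modified $y^{*}$ stays a legitimate non-increasing rearrangement at the shoulders of $I$, which is precisely why the perturbation is spread over the whole band $I$ rather than confined to the rigid flat stretch $E$.
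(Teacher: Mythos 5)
Your main-case argument is essentially the paper's: the paper likewise perturbs the level set $\{t: x^{*}(t)=\alpha\}$ by $\pm\varepsilon$ inside the affine interval, splitting it into two pieces of equal $\omega$-mass so that $\rho_{\varphi,\omega}$, and hence the Luxemburg norm, is unchanged, and then shows that a supporting functional built from $p(x(t))\omega(\sigma(t))$ on $A$ (plus, when $\theta(x)=1$, a Hahn--Banach singular part) supports both $x$ and the perturbation. Your version is in one respect tighter: you quantify over an \emph{arbitrary} $f=L_{v}+s\in Grad(x)$, using the forced identity $v^{*}=c_{0}\omega$ on the band together with $s(y-x)=0$ (since $y-x$ is bounded with support of finite measure, hence in $E_{\varphi,\omega}$), which is what non-exposedness literally requires; the paper exhibits one common supporting functional and leaves the ``for every $f\in Grad(x)$'' step implicit. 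Your remark that a purely singular $f$ trivially fails to expose also quietly covers the situation $\rho_{\varphi,\omega}(x)<1$, which the paper's case split ($\theta(x)<1$ versus $\theta(x)=1$) handles differently.

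However, there is a genuine gap, and you name it yourself without closing it: the case where $\alpha$ is an endpoint of its affine interval. Since $S^{\prime}=\bigcup_{i}[a_{i},b_{i}]$ is a union of \emph{closed} intervals, $\alpha=a_{i}$ or $\alpha=b_{i}$ is squarely within the hypothesis, and there your band $[\alpha-\varepsilon,\alpha+\varepsilon]$ leaves the affine region, so both of your verifications fail: the identity $\rho_{\varphi,\omega}(\lambda y)=\rho_{\varphi,\omega}(\lambda x)$ breaks (strict convexity on one shoulder makes any balanced two-sided perturbation strictly increase the modular), and $v^{*}$ is no longer pinned to $c_{0}\omega$ on the flat set, since at a corner $p_{-}(\alpha)<p(\alpha)$ permits $v^{*}$ anywhere in $[p_{-}(\alpha)\omega,\,p(\alpha)\omega]$. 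Your sketched remedy does not obviously work: the problem is not keeping the modular increment nonpositive but achieving $f(y)=f(x)$ \emph{exactly} for every admissible $f$, and for the extreme choice $v^{*}=p(\alpha)\omega$ at a corner $\alpha=b_{i}$ (where $p$ jumps), Young's equality pins $y^{*}=\alpha$ on the flat set, so every genuine perturbation gives $f(y)<f(x)$ for that particular $f$; a one-sided, $\omega$-mean-preserving perturbation confined to the affine interval is only available when $x^{*}$ charges the interior of $[a_{i},b_{i}]$ near $\alpha$. The paper sidesteps this by first reducing, via its extreme-point theorem, to the normal form $x=s+\alpha\chi_{A}$ with $\mu(\sigma(A)\cap L(\omega))=0$, and then selecting $\varepsilon$ with $[\alpha-\varepsilon,\alpha+\varepsilon]\subset S^{\prime}$, appealing to Cases I and II of Kami\'{n}ska's argument for the existence of such $\varepsilon$ --- that is, it treats $\alpha$ as interior. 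To complete your proof you must either justify that same reduction or actually carry out the endpoint bookkeeping; as written, the proposal does neither, and the phrase ``this delicate bookkeeping is what must be carried out'' is an acknowledgment of the missing step, not a proof of it.
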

\begin{proof}
Since an exposed point is an extreme point, we can assume without loss of generality that $x(t)=s(t)+\alpha\chi_{A}$ where $\alpha\in S^{\prime}$ and $\mu(\sigma(A)\cap L(\omega))=0$. Select $\varepsilon>0$ such that $[\alpha-\varepsilon, \alpha+\varepsilon]\in S^{\prime}$ and divide $A$ into $A_{1}$ and $A_{2}$ such that
\begin{equation*}
\int_{\sigma(A_{1})}\omega(t)dt=\int_{\sigma(A_{2})}\omega(t)dt=\frac{1}{2}\int_{\sigma(A)}\omega(t)dt.
\end{equation*}
Assume $\mu A_{1}\geq \mu A_{2}$ without loss of generality. Define $x^{\prime}$ as follow
\begin{equation*}
x^{\prime}(t)=s(t)+(\alpha+\varepsilon)\chi_{A_{1}}+(\alpha-\varepsilon)\chi_{A_{2}}.
\end{equation*}
Therefore $x^{\prime}\circ\sigma^{-1}=(x^{\prime})^{*}$ and
\begin{equation*}
\rho_{\varphi, \omega}(x^{\prime})=\rho_{\varphi, \omega}(x)
\end{equation*}
Thus $x\in S(\Lambda_{\varphi, \omega})$.

If $\theta(x)<1$, then 
$Grad(x)\subset\mathcal{M}_{\psi, \omega}^{o}$. For $t$ such that $x^{*}(t)\in S$, then $p(x^{*}(t))=p_{-}(x^{*}(t))$. Let
\begin{equation*}
v(t)=p(x(t))\omega(\sigma(t))\chi_{A}+v^{\prime}(t)\chi_{R^{+}\backslash A}.
\end{equation*}
where $v^{\prime}(t)\in [p_{-}(x(t))\omega(\sigma(t)), p(x(t))\omega(\sigma(t))]$ for $t\in R^{+}\backslash A$.
Thus $\frac{v}{\|v\|_{\mathcal{M}_{\varphi, \omega}^{o}}}$ is the supporting functional of $x$ and $x^{\prime}$. Therefore $x$ is not an exposed point of $B(\Lambda_{\varphi, \omega})$.

If $\theta(x)=1$, from Hahn-Banach Theorem, there exists an singular functional $s\in F$ such that $s(x)=\|s\|$. Let $f=L_{v}+s$ where
$$
v(t)=p(x(t))\omega(\sigma(t))\chi_{A}+v^{\prime}(t)\chi_{R^{+}\backslash A}, v^{\prime}\in [p_{-}(x(t))\omega(\sigma(t)), p(x(t))\omega(\sigma(t))].
$$
In this case $\frac{f}{\|f\|}$ is the supporting functional of $x$ and $x^{\prime}$, therefore $x$ is not an exposed point of $B(\Lambda_{\varphi, \omega}^{o})$.
\end{proof}
\begin{theorem}
$x$
is an exposed point of $B(\Lambda_{\varphi, \omega})$ if and only if\\
(1)$\mu\{t\in R^{+}: x(t)\notin S \}=0$;\\
(2)$\rho_{\varphi, \omega}(x)=1$;\\
(3)$p_{-}(x^{*})\omega\in \mathcal{M}_{\psi, \omega}$\\
(4)Let $E_{1}=\{t\in R^{+}: x(t)\in A^{\prime}\}$, $E_{2}=\{t\in R^{+}:x(t)\in B^{\prime}\}$. Then $\mu E_{1}\cdot \mu E_{2}=0$
\end{theorem}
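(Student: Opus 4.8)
The plan is to run both implications through the supporting-functional description for the Luxemburg norm recalled above, namely that $v\in S(\mathcal M_{\varphi,\omega}^{o})$ supports $x$ precisely when $p_{-}(x^{*}/\|x\|)\,\omega\le v^{*}\le p(x^{*}/\|x\|)\,\omega$, combined with the extreme-point characterization, Theorem~\ref{Sprime}, and the norm-attainment criterion Theorem~\ref{normattainluxemburg}. I would normalise $\|x\|=1$, so that (2) is merely the statement $x\in S(\Lambda_{\varphi,\omega})$. Since every exposed point is extreme, the extreme-point theorem at once gives (2) and the skeleton $x=s+\alpha\chi_{A}$ with $s$ taking values in the strict-convexity set $S$; the real task is to decide which endpoint values of the affine intervals $x^{*}$ is permitted to assume, and the exposing functional is what detects them.

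\emph{Necessity.} Theorem~\ref{Sprime} shows that $x^{*}$ cannot equal a value interior to an affine interval on a set of positive measure, which is (1); the residual freedom sits at the endpoints $a_{i},b_{i}$. For (3), since $p_{-}\le p$ pointwise and $\mathcal M_{\psi,\omega}$ is solid, the support bound forces every supporting density to satisfy $v^{*}\ge p_{-}(x^{*})\omega$; hence a regular supporting functional exists iff $p_{-}(x^{*})\omega\in\mathcal M_{\psi,\omega}$. If this fails, every $f\in Grad(x)$ carries a non-trivial singular part, and such a functional is constant along $E_{\varphi,\omega}$ and, by \cite[Theorem 2.48]{Chen1996}, not even uniquely determined on $x$; it therefore cannot strictly separate $x$ from the competitors constructed below, so $x$ is not exposed and (3) must hold.

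The crux is (4). Suppose $\mu E_{1}>0$ and $\mu E_{2}>0$, so that $x^{*}=a_{i}\in A'$ on $\sigma(E_{1})$ and $x^{*}=b_{j}\in B'$ on $\sigma(E_{2})$. Because $p_{-}=p$ at these endpoints, \emph{every} supporting density is forced to equal $p(x^{*})\omega$ there, with no slack. The Young-equality set of $w=p(a_{i})$ is $[q_{-}(w),q(w)]=[a_{i},b_{i}]$ and that of $p(b_{j})$ is $[a_{j},b_{j}]$, so a competitor $y$ sharing the rearrangement alignment of $x$ may raise $x^{*}$ on part of $\sigma(E_{1})$ and lower it on part of $\sigma(E_{2})$ while keeping Young equality. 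Exploiting that $\varphi$ is affine on each $[a_{i},b_{i}]$, I would choose the raised and lowered masses so that both $\rho_{\varphi,\omega}(y)=1$ and $\int v^{*}y^{*}=\int v^{*}x^{*}$ are preserved; as the density is forced on $\sigma(E_{1})\cup\sigma(E_{2})$ and $y^{*}=x^{*}$ off this set, one obtains $y\ne x$ in $B(\Lambda_{\varphi,\omega})$ with $f(y)=f(x)=1$ for every $f\in Grad(x)$, so $x$ is exposed by none. This balancing is the main obstacle: one must check the two linear constraints (fixed modular, fixed functional value) can be met by an honest displacement of $y^{*}$ inside the admissible brackets, and that forcing the density on the smooth-endpoint sets makes the obstruction independent of the functional.

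\emph{Sufficiency.} Assume (1)--(4). By (3) I would pick a regular density $v$ with $p_{-}(x^{*})\omega\le v^{*}\le p(x^{*})\omega$, taken strictly inside the bracket at every kink---i.e. wherever $p_{-}<p$, which by definition of $A,B$ covers the endpoint values in $x^{-1}(A\cup B)$ and any corner of $\varphi$ met inside $S$---and equal to $p(x^{*})\omega$ elsewhere, and set $f=L_{v}/\|L_{v}\|$, so that $f\in Grad(x)$ and $f(x)=1$. If $y\in B(\Lambda_{\varphi,\omega})$ satisfies $f(y)=1$, then $\|y\|=1$ and $f$ attains its norm at $y$, so Theorem~\ref{normattainluxemburg} makes the rearrangement and Young inequalities equalities: $|y|=y^{*}\circ\sigma$, $\rho_{\varphi,\omega}(y)=1$, and $y^{*}(t)\in[q_{-}(v^{*}/\omega),q(v^{*}/\omega)]$ a.e. At strict-convexity and kink points this bracket collapses to $x^{*}(t)$ by the choice of $v$, so $y^{*}$ may differ from $x^{*}$ only on $\sigma(E_{1})$, where it can only increase, or on $\sigma(E_{2})$, where it can only decrease; by (4) at most one case occurs. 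Since $\varphi$ is strictly increasing on $(0,\infty)$ while $\rho_{\varphi,\omega}(y)=\rho_{\varphi,\omega}(x)=1$, a one-sided change of $x^{*}$ would strictly alter the modular, whence $y^{*}=x^{*}$; together with $|y|=y^{*}\circ\sigma$ this forces $y=x$, so $f$ exposes $x$.
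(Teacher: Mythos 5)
Your skeleton is the same as the paper's (Theorem~\ref{Sprime} for (1), extremality for (2), the raise-on-$E_{1}$/lower-on-$E_{2}$ competitor for the necessity of (4), and Young-equality bracket-pinning with a density chosen strictly inside the kink brackets for sufficiency; your explicit one-sided monotone-modular step on $\sigma(E_{1})$ or $\sigma(E_{2})$ is in fact cleaner than the paper's), but your sufficiency has a genuine gap at the normalization step. You take a single bracket element $v$, set $f=L_{v}/\|L_{v}\|$, and, from $f(y)=1$, invoke Theorem~\ref{normattainluxemburg} to get $y^{*}(t)\in[q_{-}(v^{*}/\omega),q(v^{*}/\omega)]$. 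But condition (4) of that theorem is a Young equality at scale $k\in K_{\mathcal{M}}(v)$, not at $k=1$; nothing in your construction guarantees $1\in K_{\mathcal{M}}(v)$, and for $k\neq 1$ the bracket reads $kv^{*}\in[p_{-}(y^{*})\omega,\,p(y^{*})\omega]$, which no longer pins $y^{*}$ to $x^{*}$. (The same scaling ambiguity appears if you instead apply the supporting-functional description to both $x$ and $y$: you get two unnormalized densities $\varpi_{1},\varpi_{2}$ with $\varpi_{2}=c\varpi_{1}$ and an undetermined $c>0$.) This is precisely why the paper's sufficiency splits into four cases according to whether $P_{\psi,\omega}(p_{-}(x^{*})\omega)=1$, $P_{\psi,\omega}(p(x^{*})\omega)=1$, or $P_{\psi,\omega}(p_{-}(x^{*})\omega)<1<P_{\psi,\omega}(p(x^{*})\omega)$, and whether $\theta(x)<1$ or $\theta(x)=1$: in the mixed case it perturbs $p_{-}(x^{*})$ on the jump sets $e_{i}$ so that $P_{\psi,\omega}(v)=1$ exactly, and when $\theta(x)=1$ it adds a singular summand $s$ with $s(x)=\|s\|$, extracting the scale-one bracket from a direct H\"older--Young equality chain of the form
\begin{equation*}
\|f\|^{o}=\langle v,y\rangle+s(y)\leq P_{\psi,\omega}(v)+\rho_{\varphi,\omega}(y)+\|s\|\leq P_{\psi,\omega}(v)+1+\|s\|=\|f\|^{o},
\end{equation*}
rather than from the abstract attainment criterion. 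Your unified construction can be repaired by running such a chain directly for your $v$, but as written the $k=1$ bracket is unjustified; you also never verify $v\in\mathcal{M}_{\psi,\omega}$ --- condition (3) controls only $p_{-}(x^{*})\omega$, and your strict-interior excess at countably many kink sets needs a summability/modular constraint on the $\varepsilon_{i}$, which the paper imposes explicitly.

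A second, smaller gap is your necessity of (3). You argue that when $p_{-}(x^{*})\omega\notin\mathcal{M}_{\psi,\omega}$ all supporting functionals are singular and then appeal to \cite[Theorem 2.48]{Chen1996} for non-uniqueness; but non-uniqueness of $Grad(x)$ is irrelevant --- an exposed point may well have many supporting functionals, and what must be shown is that \emph{each} candidate attains its value at a second point of the ball. The paper does this concretely: for singular $s\in Grad(x)$ and $G(n)=\{t:|x(t)|\geq n\}$ one has $s(x\chi_{G(n)})=s(x)=\|s\|$, so every such functional also supports $x\chi_{G(n)}\neq x$, and no member of $Grad(x)$ exposes. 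Your necessity of (4), by contrast, is essentially the paper's argument; note only that you impose two balancing constraints where one suffices (choosing $\varepsilon,\delta$ to preserve the modular alone is enough, since the functional equality is then automatic from Young equality --- the displaced values $a_{i}+\varepsilon$, $b_{j}-\delta$ remain in the Young-equality intervals of the forced density values $p(a_{i})$, $p(b_{j})$), and that you should record $s(x')=s(x)$ for the singular part, since $x'-x$ is bounded with finite-measure support.
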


\begin{proof}
Necessity. Assume $x\in B(\Lambda_{\varphi, \omega})$ is an exposed point. Without loss of generality we can assume $x(t)\geq 0$.
From Theorem \ref{Sprime} condition (1) is necessary.
Since an exposed point is an extreme point, the necessity of (2) are obvious.

If condition (3) is not satisfied, then $x$ has only singular supporting functionals. Let $G(n)=\{t\in R^{+}: |x(t)|\geq n\}$, for singular functional $s\in Grad(x)$, we have $s(x\chi_{G(n)})=s(x)=\|s\|=1$. Therefore $x$ is not an exposed point of $B(\Lambda_{\varphi, \omega})$.

If condition (4) is not satisfied, there exist $a_{i}\in A^{\prime}$, $b_{j}\in B^{\prime}$ and $\varepsilon>0$ such that
$\mu G(a_{i})\neq 0$, $\mu G(b_{j})\neq 0$ where
\begin{align*}
G(a_{i})=\{t\in R^{+}: x^{*}(t)=a_{i}\},
G(b_{j})=\{t\in R^{+}: x^{*}(t)=b_{j}\}.\\
\end{align*}
Since $p$ is continuous at $a_{i}$ and $b_{j}$, there exist $\varepsilon$, $\delta$ such that
\begin{align*}
a_{i}+\varepsilon<b_{j}-\delta,  \;
p_{-}(a_{i}+\varepsilon)=p(a_{i}),\; p_{-}(b_{j}-\delta)=p(b_{j})
\end{align*}
and
\begin{equation*}
\int_{R^{+}\backslash G(a_{i})\cup G(b_{j})}\varphi(x^{*}(t))  \omega(t)dt +
\int_{G(a_{i})}\varphi(a_{i}+\varepsilon)\omega(t)dt+
\int_{G(b_{j})}\varphi(b_{j}-\delta)     \omega(t)dt
=1
\end{equation*}
Define $x^{\prime}\in\Lambda_{\varphi, \omega}$
\begin{equation*}
x^{\prime}(t)=x(t)\chi_{R^{+}\backslash \sigma^{-1}(G(a_{i})\cup G(b_{j}))}+ (a_{i}+\varepsilon)\chi_{\sigma^{-1}(G(a_{i}))}+(b_{j}-\delta)\chi_{\sigma^{-1}(G(b_{j}))}.
\end{equation*}
To calculate $\|x^{\prime}\|_{\varphi, \omega}$, we define
\begin{equation*}
x^{\prime \prime}(t)=x^{*}(t)\chi_{R^{+}\backslash (G(a_{i})\cup G(b_{j}))}+(a_{i}+\varepsilon)\chi_{G(a_{i})}+(b_{j}-\delta)\chi_{G(b_{j})}.
\end{equation*}
Since $x^{\prime \prime}$ is decreasing and $\mu_{x^{\prime}}(t)=\mu_{x^{\prime \prime}}(t)$, for $\lambda\in (\frac{a_{i}}{a_{i}+\varepsilon}, \frac{b_{j}}{b_{j}-\delta})$ we have
\begin{align*}
\rho_{\varphi, \omega}(\lambda x^{\prime})=\rho_{\varphi, \omega}(\lambda x^{\prime \prime})
&=\int_{R^{+}\backslash (G(a_{i})\cup G(b_{j}))}\varphi(\lambda x^{*})\omega(t)dt
+\int_{G(a_{i})}\varphi(\lambda(a_{i}+\varepsilon))\omega(t)dt\\
&\qquad+\int_{G(b_{j})}\varphi(\lambda(b_{j}-\delta))\omega(t)dt\\
&=\rho_{\varphi, \omega}(\lambda x).
\end{align*}
Thus $\|x^{\prime}\|_{\varphi, \omega}=\|x\|_{\varphi, \omega}=1$.

Let $\frac{f}{\|f\|^{o}}\in Grad(x)$ be the supporting functional of $x$ where $f=L_{v}+s$, we have
\begin{align*}
\|f\|^{o}=f(x)
&=\langle v,x  \rangle+ s(x)\\
&=\int_{R^{+}}v(t)x(t)dt+\|s\|\\
&=P_{\psi, \omega}(v)+\rho_{\varphi, \omega}(x)+\|s\|.
\end{align*}
Besides,
\begin{align*}
f(x^{\prime})
&=\langle v, x^{\prime}  \rangle+ s(x^{\prime})\\
&=\int_{R^{+}}v(t)y(t)dt+ s(x)+s(x^{\prime}-x)\\
&=P_{\psi, \omega}(v)+\rho_{\varphi, \omega}(x^{\prime})+\|s\|\\
&=P_{\psi, \omega}(v)+\rho_{\varphi, \omega}(x)+\|s\|\\
&=\|f\|^{o}.
\end{align*}
Thus $\frac{f}{\|f\|^{o}}$ is an exposed functional of $x$ and $x^{\prime}$ while $x\neq x^{\prime}$, a contradiction. Therefore $x$ is not an exposed point of $B(\Lambda_{\varphi, \omega})$.\\
\textsl{Sufficiency.} The sufficiency will be discussed in the following 4 cases.\\
\textsl{Case 1}: $P_{\psi, \omega}(p_{-}(x^{*})\omega)=1$ and $\theta(x)<1$. Then $L_{v}\in Grad(x)$ where $v=\frac{\varpi}{\|\varpi\|}$ and $\varpi=p_{-}(x(t))\omega(\sigma(t))$. Thus
\begin{align*}
\|p_{-}(x^{*}(t))\omega(t)\|_{\mathcal{M}_{\psi, \omega}^{o}}
&=\langle p_{-}(x(t))\omega(\sigma(t)), x(t)\rangle\\
&=\langle p_{-}(x^{*}(t))\omega(t), x^{*}(t) \rangle\\
&=\int_{R^{+}}(\psi(p_{-}(x^{*}(t)))+\varphi(x^{*}(t)))\omega(t)dt\\
&=\int_{R^{+}}\psi(\frac{p_{-}(x^{*}(t))\omega(t)}{\omega(t)})\omega(t)dt +\int_{R^{+}}\varphi(x^{*}(t))\omega(t)dt\\
&=1+P_{\psi, \omega}(v).
\end{align*}
If $v=\frac{p_{-}(x(t))\omega(\sigma(t))}{\|p_{-}(x^{*})\omega\|_{\mathcal{M}_{\psi, \omega}^{o}}}\in Grad(y)$,
 $y\in S(\Lambda_{\varphi, \omega})$. From Theorem \ref{normattainluxemburg}, $\rho_{\varphi, \omega}(y)=1$ and
\begin{align*}
\|p_{-}(x^{*}(t))\omega(t)\|_{\mathcal{M}_{\psi, \omega}^{o}}
&=\langle p_{-}(x(t))\omega(\sigma(t)), y(t) \rangle \\
&=\int_{R^{+}} y(t) p_{-}(x^{*}(\sigma(t)))\omega(\sigma(t))dt \\
&=\int_{R^{+}} y(\sigma^{-1}(t))p_{-}(x^{*}(t))\omega(t)dt \\
&\leq\int_{R^{+}}\left(\varphi(y(\sigma^{-1}(t)))+
\psi(\frac{p_{-}(x^{*}(t))\omega(t)}{\omega})\right)\omega(t)dt \\
&\leq\rho_{\varphi, \omega}(y)+P_{\psi, \omega}(v)\\
&=1+P_{\psi, \omega}(v).
\end{align*}
Thus
\begin{equation*}
p_{-}(y(\sigma^{-1}(t)))\leq p_{-}(x^{*}(t)) \leq p(y(\sigma^{-1}(t))),~a.e.~on~R^{+}.
\end{equation*}
Equivalently,
\begin{equation}\label{equal}
p_{-}(y)\leq p_{-}(x^{*}(\sigma(t)))=p_{-}(x(t))\leq p(y),~a.e.~on~R^{+}.
\end{equation}
Then $x=y$ for $t\in R^{+}$ such that $x^{*}(t)\in S$. For $t$ such that $x^{*}(t)\in A\cup B$, if $x(t)>y(t)$, Then $p_{-}(x)> P(y)$; if $x(t)< y(t)$, then $p_{-}(x)< P_{-}(y)$, and these lead to a contradiction to the inequality \ref{equal}. Thus $x(t)=y(t)$, $x(t)$ is an exposed point of $B(\Lambda_{\varphi, \omega})$.\\
\textsl{Case 2.} $P_{\psi, \omega}(p_{-}(x^{*})\omega)<1$ and $\theta(x)=1$.

In this case the supporting functionals of $x$ are in $\mathcal{M}_{\psi, \omega}^{o}\oplus F$ and have the form $f=L_{v}+s$. Let $\{r_{i}\}$ denote all of the discontinuous points of $p(t)$ and let $e_{i}=\{t\in R^{+}: x^{*}(t)=r_{i}\}$. Select $\varepsilon_{i}>0$ such that
\begin{equation*}
p_{-}(x^{*}(t))+\varepsilon_{i}\leq p(x^{*}(t)), \mu-a.e.~on~e_{i},~i=1,2...
\end{equation*}
and
\begin{equation*}
\int_{R^{+}\backslash \cup_{i}e_{i}}\psi(p_{-}(x^{*}(t)))\omega(t)dt+
\int_{\cup_{i}e_{i}}\psi(p_{-}(x^{*}(t)+\varepsilon_{i}))\omega(t)dt<1.
\end{equation*}
Let
\begin{equation*}
v(t)=p_{-}(x(t))\omega(\sigma(t))\chi_{R^{+}\backslash \sigma^{-1}(\cup_{i}e_{i})}+(p_{-}(x(t))
+\varepsilon_{i})\omega(\sigma(t))\chi_{\sigma^{-1}(\cup_{i}e_{i})}.
\end{equation*}
Thus $P_{\psi, \omega}(v)<1$. Since $\theta(x)\neq 0$, from Hahn-Banach Theorem, there exists a singular functional $s\in F$ such that $s(x)=\|s\|$. Let $\frac{f}{\|f\|^{o}}\in Grad(x)$ where $f=L_{v}+s$, thus
\begin{align*}
\|f\|^{o}=f(x)
&=\langle v, x \rangle +s(x)\\
&=P_{\psi, \omega}(v)+\rho_{\varphi, \omega}(x)+\|s\|\\
&=P_{\psi, \omega}(v)+1+\|s\|^{o}.
\end{align*}
If $\frac{f}{\|f\|^{o}}\in Grad(y)$, $y\in S(\Lambda_{\varphi, \omega})$. We have
\begin{align*}
\|f\|^{o}
&=\langle v, y \rangle + s(y)\\
&=\langle v(\sigma^{-1}(t)), y(\sigma^{-1}(t))\rangle + s(y)\\
&\leq\int_{R^{+}\backslash\cup_{i}e_{i}}\psi(p_{-}(x^{*}))
+\int_{\cup_{i}e_{i}}\psi(p_{-}(x^{*}(t)+\varepsilon_{i}))\omega(t)dt+\rho_{\varphi, \omega}(y)+\|s\|^{o}\\
&= P_{\psi, \omega}(v)+\rho_{\varphi, \omega}(y)+\|s\|^{o}\\
&= P_{\psi, \omega}(v)+1+\|s\|^{o}.
\end{align*}
Then 
\begin{align*}
p_{-}(y(\sigma^{-1}(t)))\leq p_{-}(x^{*}(t))\leq p(y(\sigma^{-1}(t))), \qquad                ~&a.e.~on~ R^{+}\backslash \cup_{i}e_{i}.\\
p_{-}(y(\sigma^{-1}(t)))\leq p_{-}(x^{*}(t))+\varepsilon_{i}\leq  p(y(\sigma^{-1}(t))), ~&a.e.~on ~\cup_{i}e_{i}.
\end{align*}
Using the same methods as Case 1 we can obtain $x(t)=y(t)$ a.e. on $R^{+}$, then $x$ is an exposed point of $B(\Lambda_{\varphi, \omega}$).\\
\textsl{Case 3}.
$P_{\psi, \omega}(p(x^{*})\omega)=1$, $\theta(x)<1$.
Thus $L_{v}\in Grad(x)$ where $v=\frac{\varpi}{\|\varpi\|}$ and $\varpi=p(x(t))\omega(\sigma(t))$. The proof is similar to Case 1.\\
\textsl{Case 4}.
$P_{\psi, \omega}(p_{-}(x^{*})\omega)<1< P_{\psi, \omega}(p(x^{*})\omega)$, $\theta(x)<1$.\\
In this case the singular functional $s=0$. Select $\varepsilon_{i}^{1}$ such that
$p_{-}(x^{*}(t))+\varepsilon_{i}^{1}\leq p(x^{*}(t))$ for $t\in e_{i}$ and
\begin{equation*}
\int_{R^{+}\backslash \cup_{i}e_{i}}\psi(p_{-}(x^{*}(t)))\omega(t)dt+
\int_{\cup_{i}e_{i}}\psi(p_{-}(x^{*}(t)+\varepsilon_{i}^{1}))\omega(t)dt=1.
\end{equation*}
Let
\begin{equation*}
v(t)=p_{-}(x(t))\omega(\sigma(t))\chi_{R^{+}\backslash \sigma^{-1}(\cup_{i}e_{i})}+(p_{-}(x(t))+\varepsilon_{i}^{1})\omega(\sigma(t))\chi_{\sigma^{-1}(\cup_{i}e_{i})}.
\end{equation*}
In this case $\frac{v}{\|v\|_{\mathcal{M}_{\psi, \omega}^{o}}}\in Grad(x)$.
Thus
\begin{align*}
\|v\|_{\mathcal{M}_{\psi, \omega}^{o}}
&=\langle v,x \rangle\\
&=\langle v(\sigma^{-1}(t)), x(\sigma^{-1}(t)) \rangle\\
&=\int_{R^{+}\backslash \cup_{i}e_{i}}\psi(p_{-}(x^{*}(t)))\omega(t)dt+\int_{\cup_{i}e_{i}}\psi(p_{-}(x^{*}(t))+\varepsilon_{i}^{1})\omega(t)dt
+\int_{R^{+}}\varphi(x^{*}(t))\omega(t)dt\\
&=P_{\psi, \omega}(v)+1.
\end{align*}
If $\frac{v}{\|v\|_{\mathcal{M}_{\psi, \omega}^{o}}}$ is the supporting functional of $y\in S(\Lambda_{\varphi, \omega})$, we have
\begin{align*}
\|v\|_{\mathcal{M}_{\psi, \omega}^{o}}
&=\langle v, y                                 \rangle\\
&=\langle v(\sigma^{-1}(t)), y(\sigma^{-1}(t)) \rangle\\
&=\int_{R^{+}}v(\sigma^{-1}(t))y(\sigma^{-1}(t))dt\\
&=\int_{R^{+}\backslash (\cup_{i}e_{i})}\psi(p_{-}(x^{*}))\omega(t)dt+\int_{\cup_{i}e_{i}}\psi(p_{-}(x^{*})+\varepsilon_{i}^{1})+\int_{\varphi}(y(\sigma^{-1}))\omega(t)dt\\
&\leq P_{\psi, \omega}(v)+\rho_{\varphi, \omega}(y)\\
&\leq P_{\psi, \omega}(v)+1\\
&= \|v\|_{\mathcal{M}_{\psi, \omega}^{o}}.
\end{align*}
Thus
\begin{equation*}
p_{-}(y(\sigma^{-1}(t)))\leq p_{-}(x^{*}(t))\leq p(y(\sigma^{-1}(t))),~t\in R^{+}\backslash (\cup_{i}e_{i})
\end{equation*}
and
\begin{equation*}
p_{-}(y(\sigma^{-1}(t)))\leq p_{-}(x^{*}(t))+\varepsilon_{i}^{1}\leq p(y(\sigma^{-1}(t))), ~t\in \cup_{i}e_{i}.
\end{equation*}
Then we can obtain
$$
x(t)=y(t), ~a.e.~on ~R^{+}.
$$
and $x$ is an exposed point of $B(\Lambda_{\varphi, \omega})$.
\end{proof}
Then we will discuss the exposed point in Orlicz-Lorentz space quipped with the Orlicz norm.


\begin{theorem}\label{notexposedpointOrlicznorm}
$x\in S(\Lambda_{\varphi, \omega}^{o})$, $x(t)=x^{*}(\sigma(t))$ and $k\in K(x)$. If $x=\alpha\chi_{A}$ where $k\alpha\in S^{\prime}$ and $\mu(\sigma(A)\cap L(\omega))=0$. Then $x$ is not an exposed point of $B(\Lambda_{\varphi, \omega})$.
\end{theorem}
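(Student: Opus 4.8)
The plan is to adapt the argument of Theorem \ref{Sprime} to the Orlicz norm by inserting the scaling constant $k \in K(x)$ wherever a modular is evaluated, and to exploit the Amemiya representation $\|x\|^{o}=\frac{1}{k}(1+\rho_{\varphi,\omega}(kx))$ valid for $k\in K(x)$. Observe first that under the stated hypotheses $x$ is already an extreme point of $B(\Lambda_{\varphi,\omega}^{o})$ (the second alternative in the extreme-point characterization for the Orlicz norm), so one cannot split $x$ as a nontrivial midpoint; instead I will exhibit a single distinct point $x'\in S(\Lambda_{\varphi,\omega}^{o})$, $x'\neq x$, at which \emph{every} supporting functional of $x$ also attains its norm. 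Since $f(x')=\|f\|=f(x)$ then holds for all $f\in Grad(x)$, no functional can strictly separate $x$ from the rest of the ball, and $x$ fails to be exposed.

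For the construction, write $k\alpha\in[a_i,b_i]=:AI$, an affine interval of $\varphi$, and choose $\varepsilon>0$ so small that $k(\alpha\pm\varepsilon)\in AI$. Because $\mu(\sigma(A)\cap L(\omega))=0$, the weight $\omega$ is strictly decreasing on $\sigma(A)$; split $A=A_{1}\cup A_{2}$ with $\sigma(A_{1})$ to the left of $\sigma(A_{2})$ and $\int_{\sigma(A_{1})}\omega=\int_{\sigma(A_{2})}\omega=\frac12\int_{\sigma(A)}\omega$, and set $x'=(\alpha+\varepsilon)\chi_{A_{1}}+(\alpha-\varepsilon)\chi_{A_{2}}$. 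This makes $x'\circ\sigma^{-1}=(x')^{*}$, so $x$ and $x'$ are simultaneously rearranged by $\sigma$, and since $(x')^{*}\not\equiv\alpha$ we have $x'\neq x$. As $\varphi$ is affine on $AI$ and the points $k\alpha,k(\alpha\pm\varepsilon)$ all lie in $AI$, the identity $\frac12[\varphi(k\alpha+k\varepsilon)+\varphi(k\alpha-k\varepsilon)]=\varphi(k\alpha)$ together with the equal $\omega$-masses gives $\rho_{\varphi,\omega}(kx')=\rho_{\varphi,\omega}(kx)$. Moreover $p$ is constant on $AI$, so $p(k(x')^{*})=p(k\alpha)$ a.e. on $\sigma(A)$ and hence $k\in K(x')$; the Amemiya representation then yields $\|x'\|^{o}=\frac1k(1+\rho_{\varphi,\omega}(kx'))=\frac1k(1+\rho_{\varphi,\omega}(kx))=\|x\|^{o}=1$.

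It remains to check that any $f=L_{v}\in Grad(x)$ attains its norm at $x'$; here $\theta(x)=0$ (because $\mu A<\infty$, forced by $\rho_{\varphi,\omega}(kx)=\varphi(k\alpha)W(\mu A)<\infty$ together with $W(\infty)=\infty$), so no singular part occurs and $f$ is purely regular. By Theorem \ref{Th15} every such $v$ satisfies $v^{*}(t)\in[p_{-}(k\alpha)\omega(t),p(k\alpha)\omega(t)]$ on $\sigma(A)$, which for $k\alpha$ interior to $AI$ forces $v^{*}=p(k\alpha)\omega$ there (and $v=0$ off $A$ by comonotonicity with $x$). Since $p_{-}(k(x')^{*})=p(k(x')^{*})=p(k\alpha)$ on $\sigma(A)$, this same $v$ lies in the admissible sandwich for $x'$; and because $x'$ shares the rearranging map $\sigma$, the comonotonicity condition $v=v^{*}(\sigma)\,\mathrm{sign}\,x'$ and condition (4) of Theorem \ref{NormattainOrlicznorm}, namely $\int_{0}^{\infty}kvx'=\rho_{\varphi,\omega}(kx')+P_{\psi,\omega}(v)$, hold verbatim, the latter because on $AI$ one has the Young equality $k(x')^{*}\cdot p(k\alpha)=\varphi(k(x')^{*})+\psi(p(k\alpha))$. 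Hence $f\in Grad(x')$, so $f(x')=\|f\|=f(x)$, and $x$ is not an exposed point of $B(\Lambda_{\varphi,\omega}^{o})$. The main obstacle is exactly this transfer step: one must confirm that $k$ remains in $K(x')$ and that condition (4) survives the perturbation, both resting on $\varphi$ being affine (so $p$ constant) across all perturbed values. The delicate point is the choice of $\varepsilon$ keeping $k(\alpha\pm\varepsilon)$ inside $S'$, which is immediate when $k\alpha$ is interior to $AI$ but requires an asymmetric split balancing $\varepsilon_{1}\int_{\sigma(A_{1})}\omega=\varepsilon_{2}\int_{\sigma(A_{2})}\omega$ when $k\alpha$ is an endpoint of $AI$.
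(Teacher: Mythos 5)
Your proposal follows essentially the same route as the paper's own (very terse) proof: perturb $x$ on an equal-$\omega$-mass split $A_{1}\cup A_{2}$ to obtain $x'$ with $\rho_{\varphi,\omega}(kx')=\rho_{\varphi,\omega}(kx)$ and $\rho_{\psi,\omega}(p(kx'))=\rho_{\psi,\omega}(p(kx))=1$, hence $k\in K(x')$ and $\|x'\|_{\varphi,\omega}^{o}=1$, and then show every supporting functional of $x$ also attains its norm at $x'$. You merely make explicit the details the paper compresses into ``by the similar method in Theorem \ref{Sprime}'' --- the affinity of $\varphi$ and constancy of $p$ on the affine interval, the transfer via Theorems \ref{NormattainOrlicznorm} and \ref{Th15} through Young's equality, and the delicate endpoint-of-affine-interval case, which the paper's choice of $\varepsilon$ likewise glosses over.
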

\begin{proof}
By the similar method in theorem \ref{Sprime}, there exists $x^{\prime}$ defined by
\begin{equation*}
kx^{\prime}(t)= x(t)\chi_{R^{+}\backslash  A}+ (\alpha+\varepsilon)\chi_{A_{1}}+(\alpha-\varepsilon)\chi_{A_{2}}.
\end{equation*}
Therefore $\rho_{\psi, \omega}(p(kx^{\prime}))=\rho_{\psi, \omega}(p(kx))=1$ and $\|x^{\prime}\|_{\varphi, \omega}^{o}=\frac{1}{k}\left( 1+\rho_{\varphi, \omega}(kx^{\prime}) \right)=1$. If $f$ is the supporting functional of $x$, then $f$ is the supporting functional of $x^{\prime}$ as well. Therefore $x$ is not an exposed point of $B(\Lambda_{\varphi, \omega}^{o})$.
\end{proof}

\begin{theorem} Let $\varphi$ be an Orlicz function and $\omega$ be a decreasing weight.
Assume $x(t)=x^{*}(\sigma(t))\in S(\Lambda_{\varphi, \omega}^{o})$ and $K(x)\neq \emptyset$. $x$ is an exposed point of $B(\Lambda_{\varphi, \omega}^{o})$ if and only if \\
(1) $K(x)=\{ k\}$. \\
(2)$\mu\{t\in R^{+}: kx(t)\notin S \}=0$; \\
(3)$\mu E=\{t\in R^{+}: kx^{*}(t)\in A^{\prime}\cup B^{\prime}\}=0$.\\
(4)
If $P_{\psi, \omega}(p_{-}(k(x^{*}))\omega)=1$, then $\mu\{t\in R^{+}: kx^{*}(t)\in B\}=0$.
If $\:\theta(kx)<1$ and $P_{\psi, \omega}(p(kx^{*})\omega)=1$,
then $\mu\{t\in R^{+}: kx^{*}(t)\in A\}=0$.
\end{theorem}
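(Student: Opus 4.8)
The plan is to follow the template of the Luxemburg-norm characterisation just proved, with $x$ replaced by $kx$ (where $\{k\}=K(x)$) and with the Orlicz-norm tools of Theorems \ref{Th15}, \ref{NormattainOrlicznorm} and \ref{th14} in place of their Luxemburg analogues. Necessity of (1) is immediate: an exposed point is extreme, and by the remark following the extreme-point theorem for $\Lambda^{o}_{\varphi,\omega}$ an extreme point has $K(x)$ a singleton. For (2), the extreme-point theorem leaves only two shapes for $kx$, namely $kx\in S$ almost everywhere or $kx=\alpha\chi_{A}$ with $\alpha\in S'$ and $\mu(\sigma(A)\cap L(\omega))=0$; the second is barred for exposed points by Theorem \ref{notexposedpointOrlicznorm}, so $\mu\{t:kx(t)\notin S\}=0$.

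For (3) and (4) I would argue contrapositively with the perturbation of Theorem \ref{Sprime}, now carried out inside an affine interval of $\varphi$. Suppose $kx^{*}$ takes a value at a smooth endpoint $a_{i}\in A'$ (or $b_{i}\in B'$) on a set of positive measure; pushing $kx^{*}$ slightly into the adjoining affine interval $(a_{i},b_{i})$ produces $x'\neq x$ with $\rho_{\psi,\omega}(p(kx'))=1$ (so $k\in K(x')$) and $\rho_{\varphi,\omega}(kx')=\rho_{\varphi,\omega}(kx)$, whence $\|x'\|_{\varphi,\omega}^{o}=1$. Because $p_{-}=p$ on $A'\cup B'$, every norming functional of $x$ furnished by Theorem \ref{Th15} takes the common value $p(kx^{*})\omega$ there, which is exactly the affine slope and therefore also norms $x'$; hence no functional can expose $x$. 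The two clauses of (4) are the same slide restricted to the subgradient that the norming functional actually realises: when $P_{\psi,\omega}(p_{-}(kx^{*})\omega)=1$ the functional uses $v^{*}=p_{-}(kx^{*})\omega$, and a value $kx^{*}\in B$ slides leftwards into its interval at the same slope $p_{-}$, forcing $B$ to be null; symmetrically, when $\theta(kx)<1$ and $P_{\psi,\omega}(p(kx^{*})\omega)=1$ the functional uses $p(kx^{*})\omega$ and $kx^{*}\in A$ slides rightwards, forcing $A$ to be null.

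For sufficiency I would reproduce the four-case scheme of the Luxemburg proof, the cases being governed by which of $P_{\psi,\omega}(p_{-}(kx^{*})\omega)$ and $P_{\psi,\omega}(p(kx^{*})\omega)$ equals $1$ and by whether $\theta(kx)<1$ or $\theta(kx)=1$. In each case I build a norming $f=L_{v}+s$: take $v^{*}$ to be $p_{-}(kx^{*})\omega$ or $p(kx^{*})\omega$, normalised in $\mathcal{M}_{\psi,\omega}$, and adjoin by Hahn--Banach a singular $s$ with $s(kx)=\|s\|$ when $\theta(kx)=1$; Theorems \ref{th14} and \ref{Th15} then confirm $f\in Grad(x)$. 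If this $f$ also norms some $y\in S(\Lambda^{o}_{\varphi,\omega})$ with $k_{y}\in K(y)$, Theorem \ref{NormattainOrlicznorm} forces $v^{*}(t)/\omega(t)\in[p_{-}(k_{y}y^{*}(t)),p(k_{y}y^{*}(t))]$ a.e., i.e.\ $v^{*}/\omega$ is a subgradient of $\varphi$ at $k_{y}y^{*}$. Conditions (2)--(4) are precisely what make $kx^{*}$ the unique argument at which $v^{*}/\omega$ is a subgradient, so $k_{y}y^{*}=kx^{*}$ a.e.; combining with $v(t)=v^{*}(\sigma(t))\,\mathrm{sign}\,x(t)$ and uniqueness of the rearrangement gives $y=x$, so $f$ exposes $x$.

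The crux, and the step I expect to cost the most, is this last uniqueness claim: upgrading $v^{*}/\omega\in[p_{-}(k_{y}y^{*}),p(k_{y}y^{*})]$ to $k_{y}y^{*}=kx^{*}$. On the strict-convexity part of $kx^{*}$ the strict monotonicity of $p$ gives it at once, but at a corner of $\varphi$ an entire interval of subgradient values collapses onto one argument, and the affine interval abutting that corner threatens to absorb $y^{*}$. Conditions (2)--(4) are calibrated to block this: (2) removes the interior plateaus on which $p$ is constant, (3) removes the smooth endpoints where $v^{*}/\omega$ would equal an affine slope on either side, and (4) removes, on the side dictated by the chosen subgradient, exactly the corner from which a slide into the neighbouring affine interval is possible. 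The delicate bookkeeping is to verify, case by case, that the corner type excluded in (4) matches the subgradient ($p_{-}$ or $p$) realised by $v$, and I expect this matching --- rather than any single inequality --- to be where the argument must be handled with care.
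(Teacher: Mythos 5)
Your overall architecture is the same as the paper's — necessity of (1) from extremality, (2) from Theorem \ref{notexposedpointOrlicznorm}, necessity of (3)--(4) by sliding $kx^{*}$ along the adjoining affine interval, and sufficiency by a four-case Young-equality scheme keyed to $P_{\psi,\omega}(p_{-}(kx^{*})\omega)$, $P_{\psi,\omega}(p(kx^{*})\omega)$ and $\theta(kx)$ — but two of your steps fail as stated. First, in the necessity of (3) you claim the slide produces $x'$ with $\rho_{\varphi,\omega}(kx')=\rho_{\varphi,\omega}(kx)$, whence $\|x'\|_{\varphi,\omega}^{o}=1$. This is false: moving $kx^{*}$ from the endpoint $b_{i}$ down to $b_{i}-\varepsilon$ strictly decreases $\varphi(kx^{*})$ on a set of positive measure, so $\rho_{\varphi,\omega}(kx')<\rho_{\varphi,\omega}(kx)$. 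The two-sided, modular-preserving perturbation of Theorem \ref{Sprime} is unavailable here precisely because $b_{i}$ is an \emph{endpoint} of the affine interval: moving upward exits the affine regime and destroys $\rho_{\psi,\omega}(p(kx'))=1$. What survives the one-sided slide is only $\rho_{\psi,\omega}(p(kx'))=1$, hence $k\in K(x')$ and, by the Amemiya formula, $\|x'\|_{\varphi,\omega}^{o}=\frac{1}{k}\left(1+\rho_{\varphi,\omega}(kx')\right)<1$. The paper repairs this by showing, via the Young equality $uv=\varphi(u)+\psi(v)$ for $u\in[b_{i}-\varepsilon,b_{i}]$, $v=p(b_{i})$, that the norming functional attains its norm at the \emph{normalized} point $x'/\|x'\|_{\varphi,\omega}^{o}$; your argument needs this normalization throughout (3) and both cases of (4).

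Second, in sufficiency your recipe ``take $v^{*}$ to be $p_{-}(kx^{*})\omega$ or $p(kx^{*})\omega$, normalised in $\mathcal{M}_{\psi,\omega}$'' breaks down in the mixed case $P_{\psi,\omega}(p_{-}(kx^{*})\omega)<1<P_{\psi,\omega}(p(kx^{*})\omega)$ and in the case $\theta(kx)=1$: multiplying $v$ by a normalizing constant $\lambda\neq 1$ destroys the subgradient relation $v^{*}/\omega\in[p_{-}(kx^{*}),p(kx^{*})]$ that the Young-equality condition of Theorem \ref{NormattainOrlicznorm} demands, so the rescaled functional no longer norms $x$. The paper's construction is different and essential: on the jump sets $e_{i}=\{t: kx^{*}(t)=r_{i}\}$ it takes the intermediate value $v^{*}/\omega=p_{-}(kx^{*})+\varepsilon_{i}$, with the $\varepsilon_{i}$ tuned either so that $P_{\psi,\omega}(v)=1$ exactly (the mixed case) or so that $P_{\psi,\omega}(v)<1$ with a Hahn--Banach singular complement $s$, $s(kx)=\|s\|$ (the case $\theta(kx)=1$). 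This intermediate choice simultaneously solves the ``crux'' you flag at the end: a value strictly interior to the jump $[p_{-}(r_{i}),p(r_{i})]$ belongs to the subdifferential of $\varphi$ only at the argument $r_{i}$, which is exactly what pins $k_{y}y^{*}=kx^{*}$ on the $e_{i}$ — taking $p_{-}$ or $p$ itself at a jump corner would let $y^{*}$ be absorbed by the abutting affine interval. Finally, a small omission: $k_{y}y=kx$ alone does not give $y=x$; you need $k_{y}=k_{y}\|y\|_{\varphi,\omega}^{o}=\|k_{y}y\|_{\varphi,\omega}^{o}=\|kx\|_{\varphi,\omega}^{o}=k$, as the paper records.
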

\begin{proof}
\textsl{Necessity.}
Assume $x\in B(\Lambda^{o}_{\varphi, \omega})$ is an exposed point of $B(\Lambda_{\varphi, \omega}^{o})$. Without loss of generality, we can suppose $x(t)\geq 0$.
Since an exposed point is an extreme point, condition (1) is true.
From Theorem \ref{notexposedpointOrlicznorm}, condition (2) is true.
If condition (3) is not satisfied, It can be discussed in three cases:    \\
(a)
$\mu\{ t\in R^{+}:kx^{*}(t)\in A^{\prime}\cup B^{\prime}\}\neq 0$, $\mu\{t\in R^{+}: kx^{*}(t)\in A^{\prime} \}=0$. \\
(b)
$\mu\{ t\in R^{+}: kx^{*}(t)\in A^{\prime}\cup B^{\prime}\}\neq 0$, $\mu \{t\in R^{+}: kx^{*}(t)\in B^{\prime}\}=0$. \\
(c)
$\mu\{ t\in R^{+}: kx^{*}(t)\in A^{\prime}\}\neq 0$, $\mu \{t\in R^{+}: kx^{*}(t)= B^{\prime}\}\neq 0$.\\

We will only discuss case (a). There exists $b_{i}\in B^{\prime}$ such that $G(b_{i})=\{t\in R^{+}: kx^{*}(t)= b_{i}\}$ and $\mu G(b_{i})>0$.
select $\varepsilon>0$ such that
\begin{equation}\label{dir}
p_{-}(b_{i}-\varepsilon)=p(b_{i}-\varepsilon)=p_{-}(b_{i})=p(b_{i}),
\end{equation}
Define $x^{\prime}\in \Lambda_{\varphi, \omega}^{o}$ such that
$$
kx^{\prime}=kx(t)\chi_{R^{+}\backslash \sigma^{-1}(G(b_{i}))}+ (b_{i}-\varepsilon)\chi_{\sigma^{-1}(G(b_{i}))}.
$$
Thus $x^{\prime}\leq x$. Consider $x^{\prime \prime}$ defined as
\begin{align*}
kx^{\prime \prime}
&=kx^{*}\chi_{R^{+}\backslash G(b_{i})}+(kx^{*}-\varepsilon)\chi_{G(b_{i})}\\
&=kx^{*}\chi_{R^{+}\backslash G(b_{i})}+(b_{i} -\varepsilon)\chi_{G(b_{i})}.
\end{align*}
Since
\begin{align*}
\mu_{kx^{\prime}}(\lambda)
&=\mu\{t\in R^{+}: |kx^{\prime}|>\lambda\}\\
&=\mu\{t\in R^{+}\backslash \sigma^{-1}(G(b_{i})):|kx|>\lambda\}+\mu \{t\in \sigma^{-1}(G(b_{i})):  |b_{i}-\varepsilon|>\lambda\}\\
&=\mu\{t\in R^{+}\backslash G(b_{i}):|kx^{*}|>\lambda\} +\mu \{t\in G(b_{i}):  |b_{i}-\varepsilon|>\lambda\}\\
&=\mu_{kx^{\prime \prime}}(\lambda).
\end{align*}
Therefore from equation (\ref{dir}) we have
\begin{align*}
\rho_{\psi, \omega}(p(kx^{\prime}))
&=\int_{R^{+}}\psi(p(k(x^{\prime})^{*}))\omega(t)dt\\
&= \int_{R^{+}}\psi(p(k(x^{\prime \prime})^{*}))\omega(t)dt\\
&\geq \int_{R^{+}\backslash G(b_{i})}\psi(p(kx^{*}))\omega(t)dt+ \int_{G(b_{i})}\psi(p(kx^{*}-\varepsilon))\omega(t)dt\\
&\geq \int_{R^{+}\backslash G(b_{i})}\psi(p(kx^{*}))\omega(t)dt+ \int_{G(b_{i})}\psi(p(b_{i}))\omega(t)dt\\
&=\rho_{\psi, \omega}(p(kx))\\
&= 1.
\end{align*}
Since $x^{\prime}\leq x$, we have $(x^{\prime})^{*}\leq x^{*}$ and
\begin{align*}
1\leq \rho_{\psi, \omega}(p(kx^{\prime}))\leq\rho_{\psi,\omega}(p(kx))=1.
\end{align*}
We have $\rho_{\psi, \omega}(p(kx^{\prime}))=1$ and $k\in K(x^{\prime})$.
Let $f=L_{v}+s$, where $v=p(kx^{*}(\sigma(t)))\omega(\sigma(t))$, and $\frac{f}{\|f\|}\in Grad(x)$. Then $f$ is norm attainable at $x$.
Form Theorem \ref{NormattainOrlicznorm} we have
\begin{align*}
\|f\|=f(x)
&=\frac{1}{k}\left(\int_{R^{+}}kx(t)p(kx)\omega(\sigma(t))dt +s(kx) \right)\\
&=\frac{1}{k}\left( \int_{R^{+}}kx^{*}(t)p(kx^{*}(t))\omega(t)dt+\|s\|\right)\\
&=\frac{1}{k}\left(P_{\psi, \omega}(p(kx^{*})\omega)+\rho_{\varphi, \omega}(kx)+\|s\|\right)\\
&=\frac{1}{k}\left(\rho_{\psi, \omega}(p(kx))+\rho_{\varphi, \omega}(kx)+\|s\|\right)\\
&=\frac{1}{k}\left(1+\rho_{\varphi, \omega}(kx)+\|s\|\right)\\
&=1+\frac{1}{k}\|s\|.
\end{align*}
Then we will consider $f(x^{\prime})$.
First we will discuss the following integral.
\begin{align*}
\int_{R^{+}}x^{\prime}p(kx^{*}(\sigma(t)))\omega(\sigma(t))dt.
\end{align*}
Define $F(t)=kx^{\prime}(t)p(kx^{*}(\sigma(t)))\omega(\sigma(t))$.
Since
\begin{align*}
kx^{\prime}(\sigma^{-1}(t))
&=(kx-\varepsilon \chi_{\sigma^{-1}(G(b_{i}))})(\sigma^{-1}(t))\\
&=kx(\sigma^{-1}(t))-\varepsilon\chi_{\sigma^{-1}(G(b _{i}))}(\sigma^{-1}(t))\\
&=kx(\sigma^{-1}(t))-\varepsilon\chi_{G(b_{i})}\\
&=kx^{*}(t)-\varepsilon\chi_{G(b_{i})}.
\end{align*}
Therefore we have
\begin{align*}
F(\sigma^{-1}(t))=kx^{\prime}(\sigma^{-1}(t))(t)p(kx^{*}(t))\omega(t)=(kx^{*}(t)-\varepsilon\chi_{\sigma(E)})p(kx^{*}(t))\omega(t).
\end{align*}
it follows that
\begin{align*}
\int_{R^{+}}F(t)dt =\int_{R^{+}}F(\sigma^{-1}(t))dt.
\end{align*}
Since $\varphi(x^{\prime})\chi_{\sigma^{-1}(G(b_{i}))}$ and $\varphi(x^{\prime\prime}(\sigma(t)))\chi_{\sigma^{-1}(G(b_{i}))}$ have the same distribution function,
we have
\begin{align*}
\|f\|
&\geq f(\frac{x^{\prime}}{\|x^{\prime}\|_{\varphi, \omega}^{o}})=\frac{1}{\|x^{\prime}\|_{\varphi, \omega}^{o}}\left(\int_{R^{+}}x^{\prime}p(kx^{*}(\sigma(t)))\omega(\sigma(t))dt\right)
+\frac{1}{k\|x^{\prime}\|_{\varphi, \omega}^{o}}s(kx^{\prime})\\
&=\frac{1}{k\|x^{\prime}\|_{\varphi, \omega}^{o}} \left( \int_{R^{+}}kx^{\prime}p(kx^{*}(\sigma(t)))\omega(\sigma(t))dt\right)
+\frac{1}{k\|x^{\prime}\|_{\varphi, \omega}^{o}}(s(kx)+s(kx^{\prime}-kx))\\
&=\frac{1}{k\|x^{\prime}\|_{\varphi, \omega}^{o}}\left( \int_{R^{+}}(kx^{*}-\varepsilon\chi_{G(b_{i})})p(x^{*})\omega(t)dt\right)+\frac{1}{k\|x^{\prime}\|_{\varphi, \omega}^{o}}s(kx)\\
&=\frac{1}{k\|x^{\prime}\|_{\varphi, \omega}^{o}}\left( \int_{R^{+}}\varphi(kx^{*}-\varepsilon\chi_{G(b_{i})})\omega(t)dt
+\int_{R^{+}}\psi(p(kx^{*}))\omega(t)dt\right)+\frac{1}{k\|x\|_{\varphi, \omega}^{o}}\|s\|\\
&=\frac{1}{k\|x^{\prime}\|_{\varphi, \omega}^{o}}\left(\int_{R^{+}}\varphi(kx^{*}-\varepsilon\chi_{G(b_{i})})\omega(t)dt
+1\right)+\frac{1}{k\|x^{\prime}\|_{\varphi, \omega}^{o}}\|s\|\\
&=\frac{1}{k\|x^{\prime}\|_{\varphi, \omega}^{o}}
\left(\int_{R^{+}\backslash G(b_{i})}\varphi((kx^{\prime})^{*})\omega(t)dt
+\int_{G(b_{i})}\varphi(x^{\prime\prime})\omega(t)dt+1\right)
+\frac{1}{k\|x^{\prime}\|_{\varphi, \omega}^{o}}\|s\|\\
&=\frac{1}{k\|x^{\prime}\|_{\varphi, \omega}^{o}}\left(\int_{R^{+}\backslash G(b_{i})}\varphi(k(x^{\prime})^{*})\omega(t)dt+
\int_{G(b_{i})} \varphi((x^{\prime})^{*})\omega(t)dt+1 \right)+\frac{1}{k\|x^{\prime}\|_{\varphi, \omega}^{o}}\|s\|\\
&=\frac{1}{k\|x^{\prime}\|_{\varphi, \omega}^{o}}\left(\int_{R^{+}}\varphi(k(x^{\prime})^{*})\omega(t)dt+1\right)
+\frac{1}{k\|x^{\prime}\|_{\varphi, \omega}^{o}}\|s\|\\
&=\frac{1}{k\|x^{\prime}\|_{\varphi, \omega}^{o}}
\left(\rho_{\varphi, \omega}(k\|x^{\prime}\|_{\varphi, \omega}^{o}
\frac{x^{\prime}}{\|x^{\prime}\|_{\varphi, \omega}^{o}})+1\right)+\frac{1}{k\|x^{\prime}\|_{\varphi, \omega}^{o}}\|s\|\\
&\geq 1+\frac{\|s\|}{k}\\
&=\|f\|.
\end{align*}
Therefore $\frac{f}{\|f\|}\in Grad(x)$ and $\frac{f}{\|f\|}\in Grad(\frac{x^{\prime}}{\|x^{\prime}\|_{\varphi,\omega}^{o}})$,
which implies that $x$ is not an exposed points of $B(\Lambda_{\varphi, \omega}^{o})$, a contradiction.


If condition (4) is not necessary, we consider the following two cases. \\
\textsl{Case 1}: $P_{\psi, \omega}(p_{-}(kx^{*})\omega)=1$ and there exists $b_{j}\in B$ and $\varepsilon>0$ such that $p_{-}(b_{j}-\varepsilon)=p(b_{j}-\varepsilon)= p_{-}(b_{j})$, $\mu G(b_{j})= \mu\{t\in R^{+}: kx^{*}(t)=b_{j}\}>0$.
Define $x^{\prime}$ such that
$$
kx^{\prime}= kx-\varepsilon\chi_{\sigma^{-1}(G(b_{j}))}.
$$
Following the way of condition (3), we have
$\|p_{-}(kx(t))\omega(\sigma(t))\|_{\mathcal{M}_{\psi, \omega}}=1$ and $p_{-}(kx(t))\omega(\sigma(t))\in Grad(x)$. It is easy to verified that $p_{-}(kx(t))\omega(\sigma(t))$ is also the supporting functional of $\frac{x^{\prime}}{\|x^{\prime}\|_{\varphi, \omega}^{o}}$, and it leads to a contradiction.\\
\textsl{Case 2}. If $\theta(kx)<1$, $P_{\psi, \omega}(p(kx^{*}(t))\omega(t))=1$, there exists $a_{i}\in A$ and $\varepsilon>0$, such that $p(a_{i}-\varepsilon)= p(a_{i})$, $\mu G(a_{i})=\mu \{t\in R^{+}: kx(t)=a_{i}\}>0$.
Define
$$
kx^{\prime}=kx +\varepsilon\chi_{\sigma^{-1}(G(a_{i}))}.
$$
Since $\rho_{\psi, \omega}(p(kx^{\prime}))=\rho_{\psi, \omega}(p(kx))=1$,
following the method in proof of condition (3),  $p(k(x^{*}(\sigma(t))))\omega(\sigma(t))$ is the supporting functional of $x$ and $\frac{x^{\prime}}{\|x\|_{\varphi, \omega}^{o}}$. Thus $x$ is not an exposed point of $B(\Lambda^{o}_{\varphi, \omega})$, a contradiction.\\
~\\
\textsl{Sufficiency: }We will prove the sufficiency in the following cases. \\
\textsl{Case 1.}\;$P_{\psi, \omega}(p_{-}(kx^{*}(t))\omega(t))=1$ and $\mu \{t\in R^{+}: |kx^{*}(t)|\in B \}=0$.\\
By Theorem \ref{th14} and Theorem \ref{Th15}, $v(t)=p_{-}(kx^{*}(\sigma(t)))\omega(\sigma(t))$ is a supporting functional of $x$. If $v$ is a supporting functional of $y\in S(\Lambda_{\varphi, \omega}^{o})$, let $k_{y}\in K(y)$ we have
\begin{align*}
\|y\|_{\varphi,\omega}^{o}=\langle y, v\rangle
&=\frac{1}{k_{y}}\int_{R^{+}}k_{y}y(t)p_{-}(kx^{*}(\sigma(t)))\omega(\sigma(t))dt\\
&=\frac{1}{k_{y}}\int_{R^{+}}(k_{y}y(\sigma^{-1}(t)))p_{-}(kx^{*}(t))\omega(t)dt\\
&\leq\frac{1}{k_{y}}\int_{R^{+}}\varphi(k_{y}y(\sigma^{-1}(t)))\omega(t)+\psi\left(\frac{p_{-}(kx^{*})\omega}{\omega}\right)\omega(t)\\
&\leq\frac{1}{k_{y}}\int_{R^{+}}\varphi(k_{y}y^{*}(t))\omega(t)dt+\psi\left(\frac{p_{-}(kx^{*})\omega}{\omega}\right)\omega(t)\\
&=\rho_{\varphi, \omega}(k_{y}y)+P_{\psi, \omega}(p(kx^{*}(t))\omega)\\
&=\frac{1}{k_{y}}(\rho_{\varphi, \omega}(k_{y}y)+1)\\
&=\|y\|_{\varphi, \omega}^{o}.
\end{align*}
Therefore we have
\begin{equation*}
p_{-}(k_{y}y(\sigma^{-1}(t)))\leq p_{-}(kx^{*}(t)) \leq p(k_{y}y(\sigma^{-1}(t))),
\end{equation*}
and
\begin{equation*}
p_{-}(k_{y}y)\leq p_{-}(kx^{*}(\sigma(t)))=p_{-}(kx)\leq p(k_{y}y).
\end{equation*}
Obviously, $k_{y}y=kx$ for $t\in R^{+}$ such that $x^{*}(t)\in S$ or $x^{*}(t)\in A$.
Therefore $k_{y}=k_{y}\|y\|_{\varphi, \omega}^{o}=\|k_{y}y\|_{\varphi, \omega}^{o}=\|kx\|_{\varphi, \omega}=k$ and $x=y$, which implies $x$ is an exposed point. \\
\textsl{Case 2.}\:$P_{\psi, \omega}(p_{-}(kx^{*})\omega)<1$ and $\theta(kx)=1$.\\
Since $\theta(kx)=1$, the supporting functional of $x$ is in $\mathcal{M}_{\psi, \omega}\oplus F$.
Select $\varepsilon_{i}^{2}>0$ such that
$$
p_{-}(kx^{*}(t))+\varepsilon^{2}_{i}\leq p(kx^{*}(t)), \;a.e.\: on\: e_{i},\; i=1,2,...n,
$$
and
$$
\int_{R^{+}\backslash \cup_{i}e_{i}}\psi(p_{-}(kx^{*}(t)))\omega(t)dt+\int_{\cup_{i}e_{i}}\psi(p_{-}
(kx^{*}(t))+\varepsilon_{i})dt < 1
$$
as well as
$p_{-}(kx^{*}(t))\chi_{R^{+}\backslash\cup_{i}e_{i}}+(p_{-}(kx^{*}(t))+\varepsilon_{i})\chi_{\cup_{i}e_{i}}$
is decreasing. Let
\begin{equation*}
v(t)=p_{-}(kx^{*}(\sigma(t)))\omega(\sigma(t))\chi_{R^{+}\backslash \sigma^{-1}(\cup_{i}e_{i})}+(p_{-}(kx^{*}(\sigma(t)))+\varepsilon^{2}_{i})\omega(\sigma(t)))\chi_{\sigma^{-1}(\cup_{i}e_{i})}.
\end{equation*}
Thus $P_{\psi, \omega}(v)<1$.
If there is $y\in S(\Lambda_{\varphi, \omega}^{o})$ such that $f\in Grad(y)$ as well, assume $k_{y}\in K(y)$,
\begin{align*}
1=\|f\|_{\mathcal{M}_{\varphi, \omega}^{o}}
&=\langle v, y\rangle +s(y)\\
&=\frac{1}{k_{y}}\left(\langle v(\sigma^{-1}(t), y(\sigma^{-1}(t)))\rangle +s(k_{y}y)\right)\\
&=\frac{1}{k_{y}}\left(\int_{R^{+}}\frac{v(\sigma^{-1}(t))}{\omega(t)}y(\sigma^{-1}(t))\omega(t)dt+s(k_{y}y)\right)\\
&\leq \frac{1}{k_{y}}\left(\int_{R^{+}}\psi(\frac{v(\sigma^{-1}(t))}{\omega(t)})\omega(t)dt +\int_{R^{+}}\varphi(y(\sigma^{-1}(t)))+s(k_{y}y)\right)\\
&=\frac{1}{k_{y}}\left( P_{\psi, \omega}(v)+\rho_{\varphi, \omega}(k_{y}y(\sigma^{-1}(t)))+\|s\|\right)\\
&\leq \frac{1}{k_{y}}(1+\rho_{\varphi, \omega}(k_{y}y))\\
&=1.
\end{align*}
Thus we have $\|s\|=s(k_{y}y)=s(kx)$. 
Besides,
\begin{equation*}
p_{-}(k_{y}y(\sigma^{-1}(t)))\leq p_{-}(kx^{*}(t))\leq p(k_{y}y(\sigma^{-1}(t))),\: t\in R^{+}\backslash (\cup_{i}e_{i}),
\end{equation*}
and
\begin{equation*}
p_{-}(k_{y}y(\sigma^{-1}(t)))\leq p_{-}(kx^{*}(t))+\varepsilon_{i}\leq p(k_{y}y(\sigma^{-1}(t))),  \: t\in \cup_{i}e_{i}.
\end{equation*}
This implies
$$
kx(t)=kx^{*}(\sigma(t))=k_{y}y(t), \sigma(t)\in S\; or \; \sigma(t)\in e_{i},\;i=1,2...n.
$$
By $k_{y}=\|k_{y}y\|_{\varphi, \omega}^{o}=\|kx\|_{\varphi, \omega}^{o}=k$, we have $y=x$. It follows that $x$ is an exposed point of $B(\Lambda_{\varphi, \omega}^{o})$. \\
\textsl{Case 3.} $P_{\psi, \omega}(p(kx^{*})\omega)=1$ and $\mu\{t\in R^{+}:\: kx^{*}(t)\in A\}=0$.\\
The proof is similar to Case 1. \\
\textsl{Case 4.} $P_{\psi, \omega}(p_{-}(kx^{*})\omega)<1<P_{\psi, \omega}(p(kx^{*})\omega)$ and $\theta(kx)<1$. \\
We have $Grad(x)\subset\mathcal{M}_{\psi, \omega}$ and we can select $\varepsilon_{i}^{3}>0$ such that
$$
p_{-}(kx^{*}(t))+\varepsilon^{3}_{i}\leq p(kx^{*}(t)), \;a.e.\: on\: e_{i},\; i=1,2,...n,
$$
and
$$
\int_{R^{+}\backslash \cup_{i}e_{i}}\psi(kx^{*}(t))\omega(t)dt+\int_{\cup_{i}e_{i}}\psi(kx^{*}(t)+\varepsilon^{3}_{i})\omega(\sigma(t))dt = 1.
$$
Let
\begin{equation*}
v(t)=p_{-}(kx^{*}(\sigma(t)))\omega(\sigma(t))\chi_{R^{+}\backslash \cup_{i}\sigma^{-1}(e_{i})}+(p_{-}(kx^{*}(\sigma(t)))
+\varepsilon^{3}_{i})\omega(\sigma(t)))\chi_{\cup_{i}\sigma^{-1}(e_{i})}.
\end{equation*}
Thus $P_{\psi, \omega}(v)=1$. If $in Grad(y)$, $y\in S(\Lambda_{\varphi, \omega}^{o})$. Then
\begin{align*}
\|y\|_{\varphi, \omega}^{o}
&=\frac{1}{k_{y}}\langle v, y\rangle \\
&=\frac{1}{k_{y}}\langle v(\sigma^{-1}(t)), y(\sigma^{-1}(t))\rangle\\
&=\frac{1}{k_{y}}\int_{R^{+}}\frac{v(\sigma^{-1}(t))}{\omega(t)}y(\sigma^{-1}(t))\omega(t)dt\\
&\leq\frac{1}{k_{y}}\left(\int_{R^{+}}\psi(\frac{v(\sigma^{-1}(t))}{\omega(t)})\omega(t)dt+ \int_{R^{+}}\varphi(y(\sigma^{-1}(t)))\omega(t)dt\right)\\
&\leq\frac{1}{k_{y}}(1+\rho_{\varphi, \omega}(k_{y}y))\\
&=\|y\|_{\varphi, \omega}^{o}.
\end{align*}
By the same method in Case 2, we can obtain
\begin{equation*}
k_{y}y(t)= kx(t), \sigma(t)\in S \; or\;\sigma(t)\in \cup_{i}e_{i},\; i=1,2...n.
\end{equation*}
Besides, $k_{y}=k_{y}\|y\|=kx=k$, and it implies $x=y$, $x$ is an exposed point of $B(\Lambda_{\varphi, \omega}^{o})$.
\end{proof}

\nocite{Halperin195305}\nocite{Ryff1970449}
\section{Acknowledgement}

The authors gratelfully acknowledge financial support from China Scholarship Council and the National Natural Science Foundation of P. R. China (Nos. 11971493).



\bigskip
%
%
%
%

\begin{appendices}




\end{appendices}
\end{document}